

\documentclass[reqno,11pt]{amsart}
\usepackage{amsmath,amssymb,latexsym,soul,cite,mathrsfs}
\usepackage{color,enumitem,graphicx}
\usepackage[colorlinks=true,urlcolor=blue,
citecolor=red,linkcolor=blue,linktocpage,pdfpagelabels,
bookmarksnumbered,bookmarksopen]{hyperref}
\usepackage[english]{babel}
\usepackage[left=2.6cm,right=2.6cm,top=2.9cm,bottom=2.9cm]{geometry}
\usepackage[hyperpageref]{backref}

\usepackage[most]{tcolorbox}

\newcounter{mytable}

\newcounter{mytableeq}
\renewcommand{\themytableeq}{\Roman{mytableeq}}

\newcommand{\equationbox}[2]{%
  \refstepcounter{mytableeq}%
  \noindent%
  \begin{minipage}{\dimexpr\linewidth-2em}%
    #1%
  \end{minipage}%
  \hfill (\themytableeq)\label{#2}
}

\tcbset{
  boxA/.style={
    colback=gray!10,       
    colframe=black,        
    boxrule=0.5pt,         
    arc=2mm,               
    width=11.3cm,
    left=6pt, right=6pt,   
    top=6pt, bottom=6pt,
  }
}


\usepackage[utf8]{inputenc}
\usepackage{dsfont}
\usepackage{mathtools}

\usepackage{tikz}
\usetikzlibrary{arrows}

\pretolerance=10000

\newtheorem{theorem}{Theorem}
\newtheorem{lemma}{Lemma}

\newtheorem{proposition}{Proposition}
\newtheorem{remark}{Remark}
\newtheorem{definition}{Definition}
\newtheorem{claim}{Claim}

\newtheoremstyle{tttheorem}
{}                
{}                
{\slshape}        
{}                
{\bfseries}       
{'}               
{ }               
{}                
\theoremstyle{tttheorem}

\def\XXint#1#2#3{{\setbox0=\hbox{$#1{#2#3}{\int}$ }
		\vcenter{\hbox{$#2#3$ }}\kern-.6\wd0}}



\setcounter{tocdepth}{3}


\title[Overdetermined elliptic problems]{Overdetermined elliptic problems on model Riemannian manifolds}

\author[J.M. do \'O]{Jo\~ao Marcos do \'O*}
\author[J. de Lima]{Jaqueline de Lima}
\author[M. Santos]{M\'{a}rcio Santos}

\address[J.M. do \'O]{Department of Mathematics,
	Federal University of Para\'{\i}ba
	\newline\indent 
	58051-900, Jo\~ao Pessoa-PB, Brazil}
\email{\href{mailto:jmbo@mat.ufpb.br}{jmbo@mat.ufpb.br}}

\address[J. de Lima]{Department of Mathematics,
	Federal University of Para\'{\i}ba
	\newline\indent 
	58051-900, Jo\~ao Pessoa-PB, Brazil}
\email{\href{mailto:jaqueline.lima@academico.ufpb.br}{jaqueline.lima@academico.ufpb.br}}

\address[M. Santos]{Department of Mathematics,
	Federal University of Para\'{\i}ba
	\newline\indent 
	58051-900, Jo\~ao Pessoa-PB, Brazil}
\email{\href{mailto:marcio.santos@academico.ufpb.br}{marcio.santos@academico.ufpb.br}}

\thanks{*Corresponding author.}

\subjclass[2000]{35R01, 35B50, 35N25, 53C24,58J05,58J32}
\keywords{Overdetermined elliptic problems, mixed boundary conditions,  maximum principle, space form, $P-$function, rigidity.}

\begin{document}

\begin{abstract}
We establish a rigidity theorem for annular sector-like domains in the setting of overdetermined elliptic problems on model Riemannian manifolds. Specifically, if such a domain admits a solution to the inhomogeneous Helmholtz equation satisfying both constant Dirichlet and constant Neumann boundary conditions, then the domain must be a spherical sector, and the solution must be radially symmetric. This result underscores the strong geometric constraints imposed by overdetermined boundary conditions, extending classical rigidity phenomena to this more general framework.
\end{abstract}
	
	\maketitle
	
	

\section{Introduction}
This paper examines a geometric problem first introduced by Serrin~\cite{Serrin1971}, originally motivated by a fluid dynamics question posed by Fosdick. In~\cite{Serrin1971}, Serrin studied the rigidity of solutions to the following overdetermined Poisson problem:
\begin{equation} \label{serrin_system}
    \begin{cases}
        \Delta u = -1 & \text{in } \Omega \\[8pt]
        u = 0          & \text{on } \partial\Omega \\[8pt]
        \dfrac{\partial u}{\partial \nu} = -c & \text{on } \partial\Omega
    \end{cases}
\end{equation}
where $\Omega \subset \mathbb{R}^n$ is a smooth bounded domain (i.e., a connected open set), $\nu$ denotes the outward unit normal vector, $\partial u / \partial \nu$ is the normal derivative of $u$ on $\partial\Omega$, and $c > 0$ is a constant.
This seminal study established that Problem \( \eqref{serrin_system} \) has a solution if and only if the domain \( \Omega \) is a ball and the function \( u \) is radial. The proof was primarily based on the \emph{method of moving planes}, a technique originally introduced by A. Alexandrov \cite{alexandrov1958vestnik,alexandrov1962characteristic} and now widely recognized by this name.

Alternatively, Weinberger~\cite{MR333221} introduced a different approach based on a $P$-function associated with the solution $u$ of \eqref{serrin_system}. His method combines classical maximum principles with the Pohozaev identity for Euclidean domains to establish the result. While Weinberger's approach is elegant and simple, it relies crucially on the linearity of the Laplace operator, which limits its extension to nonlinear problems. Moreover, it applies only to Poisson equations with constant right-hand sides. In contrast, Serrin's method of moving planes provides a more versatile framework that can handle general fully nonlinear elliptic operators with arbitrary data, demonstrating significantly broader applicability.

Following Serrin's pioneering work, numerous extensions of his symmetry result have been established for various space forms, as demonstrated in \cite{CV, ciraolo2019serrin,fall2018serrin, kumaresan1998serrins, molzon1991symmetry,qiu2017overdetermined,roncoroni2018serrin} and related literature.

Recently, significant research has been devoted to the rigidity of overdetermined equations associated with various classes of operators and domains, particularly emphasising their mathematical properties and intrinsic structural constraints. Given its deep connection to geometric analysis, this topic remains a relevant and compelling study area. 

We direct the reader to \cite{PacellaTralli2020} and its references for a thorough summary.  
The regularity theory and maximum principles for second-order elliptic equations are classically treated in Gilbarg and Trudinger \cite{MR1814364}. On Riemannian manifolds, the analogous framework—incorporating Sobolev spaces and geometric constraints—is detailed in Hebey’s monograph \cite{MR1688256}.

In their work \cite{PacellaTralli2020}, F. Pacella and G. Tralli analyzed a partially overdetermined problem 
defined in a sector-like domain $\Omega$ in an open convex cone $\Sigma$  in Euclidean space $\mathbb{R}^n$: 
\begin{equation}\label{eq:pde-system}
\begin{cases}
\Delta u = -1 & \text{in } \Omega \\
u = 0 & \text{on } \partial\Omega \\
\frac{\partial u}{\partial\nu} = -c & \text{(constant) on } \Gamma_0 \text{ where } c > 0 \\
\frac{\partial u}{\partial\nu} = 0 & \text{on } \Gamma_1 \setminus \{O\}
\end{cases}
\end{equation}
where $\nu = \nu(x)$ denotes the outward unit normal to $\partial\Omega$ (defined for all $x \in \Gamma_0 \cup \Gamma_1 \setminus \{O\}$). Under a convexity assumption on the cone, among other results, they derived a Serrin-type rigidity result, demonstrating that solutions exist exclusively for spherical sectors.  Recently, G. Ciraolo and A. Roncoroni  \cite{ciraolo2020serrin} generalized this result to space forms. Specifically, they studied the following partially overdetermined problem in space forms:
\begin{equation}\label{eq:overdetermined}
\begin{cases}
\Delta u + nKu = -n & \text{in } \Omega \\
u = 0 & \text{on } \partial\Omega \\
\frac{\partial u}{\partial\nu} = -c & \text{(constant) on } \Gamma_0 \\
\frac{\partial u}{\partial\nu} = 0 & \text{on } \Gamma_1 \setminus \{O\}
\end{cases}
\end{equation}
where $K \in \{0, +1, -1\}$ corresponds to Euclidean space, the upper unit hemisphere $\mathbb{S}^n_+$, and hyperbolic space $\mathbb{H}^n$, respectively.  Their main result states that if \( \Omega \) is a sector-like domain within a convex cone in these space forms, then \( \Omega \) must be the intersection of the cone with a geodesic ball, and the solution \( u \) must be radially symmetric.

Subsequently, J.~Lee and K.~Seo in  \cite{lee2023radial} generalized this result by relaxing the requirement that $\Omega \subset \mathbb{S}^{n}_{+}$. Instead, they considered $\Omega \subset \mathbb{S}^n$ to be a \textit{star-shaped domain with respect to a pole} $\mathcal{O}$, meaning that each portion of $\partial\Omega$ can be represented as a graph over a geodesic sphere centered at $\mathcal{O}$. This weaker geometric assumption permits the domain to extend beyond the hemisphere while preserving a controlled symmetry about the point $\mathcal{O}$.

Subsequently, J.~Lee and K.~Seo in~ \cite{lee2023radial} extended this result by relaxing the condition that \( \Omega \subset \mathbb{S}^n_+ \). Instead, they considered domains \( \Omega \subset \mathbb{S}^n \) that are \textit{star-shaped with respect to a pole} \( \mathcal{O} \); that is, each portion of \( \partial\Omega \) can be described as the graph of a function over a geodesic sphere centered at \( \mathcal{O} \). This weaker geometric assumption allows the domain to extend beyond the hemisphere while maintaining a controlled symmetry about the point \( \mathcal{O} \).

More recently, in \cite{lee2023overdetermined}, the authors investigated the equation
\[
\Delta u = -n - nK u
\]
in annular domains contained in space forms, that is, domains bounded by two surfaces, one of which is a geodesic hypersphere. In this setting, it is assumed that the domain includes a spherical boundary component where Dirichlet conditions are imposed, while Neumann conditions are prescribed on the remaining portion of the boundary. Using variational techniques and geometric identities adapted to the constant curvature setting, J.~Lee and K.~Seo show that, under these assumptions, the domain must necessarily be a geodesic annular region bounded by two concentric hyperspheres. This result reinforces the geometric rigidity imposed by the overdetermined structure of the equation and extends the symmetry characterization of admissible domains, even when the domain is not simply connected or deviates from a fully radial structure.

%
%

\subsection{Our problem}
Let $(\mathcal M, g)$ be a complete Riemannian manifold with dimension $N$. 
The class of Riemannian model $(\mathcal{M}, g)$ includes the classical space forms. Precisely, a manifold $\mathcal{M}$ of dimension $N\geq 2$ admitting a pole $\mathcal{O}$ and whose metric $g$ is given, in polar coordinates around $\mathcal{O}$, by
\begin{equation}\label{08}
\mathrm{d} s^2=\mathrm{d} r^2+\psi(r)^2\mathrm{d} \theta^2 \quad \text{for }r \in (0,R)\text{ and }\theta\in\mathbb{S}^{N-1},
\end{equation}
where $r$ is by construction the Riemannian distance between the point $P=(r,\theta)$ to the pole $\mathcal{O}$, $\psi$ is a smooth positive function in $(0,R)$ and $\mathrm{d}\theta^2$ is the canonical metric on the unit sphere $\mathbb{S}^{N-1}$. Note that our results apply to the important case of space forms, i.e., the unique complete and simply connected Riemannian manifold of constant sectional curvature $K_\psi$ corresponding to the choice of $\psi$, namely,

\medskip

\equationbox{%
\begin{tcolorbox}[boxA]
    \begin{tabular}{l l l l }
        {\bf Euclidean space:}   &  $ \psi(r) = r,  $ &  $K_\psi = 0,$ & $r\in [0,\infty)$ \\
        {\bf Elliptic space: }   &  $ \psi(r) = \sin r,  $ &  $K_\psi = 1,$ & $ r\in [0,\pi)$ \\
        {\bf Hyperbolic space:}  &  $ \psi(r) = \sinh r, $ &  $ K_\psi = -1,$ &  $r\in [0,\infty)$\\
    \end{tabular}
\end{tcolorbox}
}{eq:tabela}

\medskip

\begin{definition}
    We define an \emph{open cone} $\Sigma$ with vertex at $\left\{\mathcal{O}\right\}$ as the set
$$\Sigma \doteq \left\{tx;\\ x\in\omega,\\ t\in I\right\},$$
for some open domain $\omega\subset \mathbb{S}^n$, where $I=[0,\infty)$ for the Euclidean space and Hyperbolic space, and $I=[0,\pi)$ for  Elliptic space.
Here, if \( \nu \) denotes the outward unit normal vector field, the second fundamental form is given by $A(X, Y) = g(\nabla_{X} \nu, Y),$ where \( X \) and \( Y \) are tangent vector fields to \( \partial \Sigma \). We say that a cone is \emph{convex} if its second fundamental form is nonnegative at every point \( x \in \partial \Sigma \), that is, $A(X,X)\geq 0$ for all tangent vector fields to $\partial\Sigma$. Moreover, the mean curvature $H$ is the trace of the second fundamental form.
\end{definition}

In the same way as \cite{ciraolo2020serrin} and \cite{PacellaTralli2020} we have the concept of \emph{annular sector-like domain} that we define below.
\begin{definition}\label{def_sec}
    Let $\Sigma$ be a convex open cone as described above. Consider bounded domains $\Omega_1$ and $\Omega_2$ contained in $\Sigma$, with $\overline{\Omega}_2 \cap \Sigma \subset \Omega_1$ and a point $\mathcal{O} \in \partial \Omega_2$. Denote by $\mathcal{H}_{n-1}(\cdot)$ the $(n-1)$-dimensional Hausdorff measure.
We call $\Omega := \Omega_1 \setminus \overline{\Omega}_2$ an \textbf{annular sector-like domain} if the following conditions are satisfied: $\Gamma_0 := \partial \Omega_1 \cap \Sigma$, $\Gamma_2 := \partial \Omega_2 \cap \Sigma$, and $\Gamma_1 := \partial \Omega \setminus (\overline{\Gamma}_0 \cup \overline{\Gamma}_2)$ satisfy $\mathcal{H}_{n-1}(\Gamma_i) > 0$ for $i = 0, 1, 2$; each of $\Gamma_0$, $\Gamma_1$, and $\Gamma_2$ is a smooth $(n-1)$-dimensional manifold; the boundaries $\partial \Gamma_0$ and $\partial \Gamma_2$, which are contained in $\partial \Omega$, are smooth $(n-2)$-dimensional manifolds; and the boundary of $\Gamma_1$ satisfies $\partial \Gamma_1 = \partial \Gamma_0 \cup \partial \Gamma_2$.
\end{definition}

\begin{center}
        \begin{tikzpicture}[scale=1.3]

\draw[line width=0.7pt,color=black] (2,3)--(0,0)-- (-2,3);

\draw[line width=0.7pt,color=black,fill=black,fill opacity=0.25] 
            (-1.84,2.76) .. controls (-1.2,3.2) and (-0.85,2) ..
            (0,2.6).. controls (1,3.5) and (1.3,3) .. 
            (1.8,2.7)--(0,0);

\draw[line width=0.7pt,color=black,fill=white,fill opacity=1] 
            (-0.66,1) .. controls (-0.5,1.25) and (-0.25,1.1) ..
            (0.1,0.95).. controls (0.35,0.85) and (0.5,1.2) .. 
            (0.83,1.25)--(0,0)--(-0.66,1);

\draw [fill=black] (0,0) circle (0.7pt);

\draw  (0,-0.15) node {\footnotesize \textcolor{black}{$\mathcal{O}$}};
      
\draw  (1.37,1.7) node {\footnotesize \textcolor{black}{$\Gamma_1$}};

\draw  (1.1,3.25) node {\footnotesize \textcolor{black}{$\Gamma_0$}};

\draw  (0,1.8) node {\footnotesize \textcolor{black}{$\Omega$}};

\draw  (0,0.86) node {\footnotesize\textcolor{black}{$\Gamma_2$}};

\end{tikzpicture}
\end{center}

Inspired by \cite{lee2023overdetermined}, let us consider the following Serrin problem in annular sector-like domains involving  Helmholtz type equations as follows
\begin{equation}\label{PG}
\left\{
    \begin{aligned}
&        \Delta u = - k n u -n,\quad & \text{in} \quad \Omega \\
&        u=0, \quad \dfrac{\partial u}{\partial \nu} = c_0 \quad & \text{on} \quad \Gamma_0\\
&        \dfrac{\partial u}{\partial \nu}=0 \quad & \text{on} \quad \Gamma_1\\
&        u=a, \quad  \dfrac{\partial u}{\partial \nu}=c_2  \quad & \text{on} \quad \Gamma_2
\end{aligned}
    \right.
\end{equation}
where $c_0,c_2,a\in\mathbb{R}$ and $k$ is the sectional curvature of  $M$.

\subsection{Main results} 

\subsubsection{Euclidean space}

\begin{theorem}[Euclidean space]\label{Euclidean space}
Let $\Omega$ be an annular sector-like domain contained in $\mathbb{R}^n$, with $\Omega_2=B_R(\mathcal{O})$. Given $0<R<R_1$, suppose there is a solution $u \in C^2(\Omega) \cap C^1(\overline{\Omega}) \cap W^{1,\infty}(\Omega) \cap W^{2,2}(\Omega) $ such that
\begin{equation}\label{saber}
    \begin{cases}
        \Delta u=-n & \text{in } \Omega\\
        u=0 \text{ and } \dfrac{\partial u}{\partial \nu }=-R_1 & \text{on } \Gamma_0\\
        \dfrac{\partial u}{\partial \nu} =0  & \text{on } \Gamma_1\\
        u=a>0 \text{ and } \dfrac{\partial u}{\partial \nu}= R & \text{on } \Gamma_2
    \end{cases}
\end{equation}
where \( R_1 \) and \( a \) are constants.  Then, \( \Gamma_0 = \partial B_{R_1} \cap \Sigma \), that is, \( \Omega = \{x\in \Sigma: R< d(x,\mathcal{O})< R_1\} \).  Moreover, \( u \) is a radial function, precisely,
\begin{equation*}
    u(x) = \frac{R_1^2}{2} - \frac{r^2}{2}, \quad \text{where} \quad  r=\operatorname{dist}(x,\mathcal{O}).
\end{equation*}
\end{theorem}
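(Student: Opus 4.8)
The plan is to adapt Weinberger's $P$-function method to the annular sector, exploiting the convexity of the cone to control the lateral boundary $\Gamma_1$, the fact that $\Gamma_2$ lies on the exact sphere $\partial B_R$ to control that piece, and a Rellich--Pohozaev identity to dispose of the outer boundary $\Gamma_0$, whose curvature is unknown. Concretely, I introduce
\[
P := |\nabla u|^2 + 2u,
\]
which is the quantity that is constant ($\equiv R_1^2$) for the candidate radial solution $u=(R_1^2-r^2)/2$. Since $\Delta u=-n$ is constant and the ambient Ricci curvature vanishes, Bochner's formula gives $\Delta P = 2|\nabla^2 u|^2 + 2\Delta u$, and the Cauchy--Schwarz bound $|\nabla^2 u|^2 \ge (\Delta u)^2/n = n$ yields $\Delta P \ge 0$, with equality at a point precisely when $\nabla^2 u = -g$ there. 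Thus $P$ is subharmonic, and the whole argument reduces to proving $P\equiv R_1^2$.

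Next I analyze $P$ on each boundary piece. On $\Gamma_0$ and $\Gamma_2$, where $u$ is constant, the gradient is purely normal, so $P=R_1^2$ on $\Gamma_0$ and $P=R^2+2a$ on $\Gamma_2$. On $\Gamma_1$ the condition $\partial u/\partial\nu=0$ makes $\nabla u$ tangential, and differentiating $g(\nabla u,\nu)=0$ along $\nabla u$ gives the standard identity $\partial P/\partial\nu = -2A(\nabla u,\nabla u)\le 0$ by convexity of the cone. The crucial observation is that on $\Gamma_2=\partial B_R\cap\Sigma$ I can compute $\partial P/\partial\nu$ exactly: using the tangential decomposition $\Delta u = \nabla^2 u(\nu,\nu)+\Delta_{\Gamma_2}u+H_2\,\partial u/\partial\nu$ together with the known mean curvature $H_2=-(n-1)/R$ of the sphere and the prescribed datum $\partial u/\partial\nu=R$, one finds $\nabla^2 u(\nu,\nu)=-1$ and hence $\partial P/\partial\nu=0$ on $\Gamma_2$. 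With $\partial P/\partial\nu\le 0$ on $\Gamma_1$ and $\partial P/\partial\nu=0$ on $\Gamma_2$, the strong maximum principle and Hopf's lemma force the maximum of $P$ onto $\overline{\Gamma_0}$; therefore $P\le R_1^2$ throughout $\overline\Omega$, and in particular $R^2+2a\le R_1^2$.

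To upgrade this inequality to an identity I compute $\int_\Omega P$ exactly. Integrating $u\,\Delta u$ by parts and using the boundary data gives $\int_\Omega|\nabla u|^2 = n\int_\Omega u + aR\,\mathcal{H}_{n-1}(\Gamma_2)$. I then apply the Rellich--Pohozaev identity for the position field $X=x$; the key structural facts are that $X$ is tangent to the cone, so $x\cdot\nu=0$ on $\Gamma_1$, while $x\cdot\nu=-R$ on $\Gamma_2$, which makes all boundary contributions explicit and, notably, eliminates the unknown mean curvature of $\Gamma_0$. Combining these relations yields a closed formula of the shape
\[
\int_\Omega P = R_1^2\,|\Omega| + \frac{R\,\mathcal{H}_{n-1}(\Gamma_2)}{n}\,\big[(R_1^2-R^2)-2a\big].
\]
Since the bracket is nonnegative by the previous paragraph, while $P\le R_1^2$ forces $\int_\Omega P\le R_1^2|\Omega|$, both constraints can hold only if $(R_1^2-R^2)-2a=0$, i.e. $a=(R_1^2-R^2)/2$, and $\int_\Omega(R_1^2-P)=0$ with a nonnegative integrand, whence $P\equiv R_1^2$.

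Finally, $P\equiv R_1^2$ forces $\Delta P\equiv 0$, so equality holds in Cauchy--Schwarz everywhere and $\nabla^2 u=-g$ on the connected set $\Omega$. Integrating gives $u=-\tfrac12|x|^2+b\cdot x+c$; imposing $\partial u/\partial\nu=R$ on $\Gamma_2$ shows $b\cdot x\equiv 0$ on a spanning piece of $\partial B_R$, so $b=0$, and then $u=0$ with $\partial u/\partial\nu=-R_1$ on $\Gamma_0$ pins down $c=R_1^2/2$ and forces $\Gamma_0=\partial B_{R_1}\cap\Sigma$, giving $\Omega=\{R<r<R_1\}\cap\Sigma$ and $u=(R_1^2-r^2)/2$. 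I expect the main obstacles to be technical rather than conceptual: justifying the integrations by parts and the Rellich--Pohozaev identity under the stated regularity $u\in C^2(\Omega)\cap C^1(\overline\Omega)\cap W^{2,2}(\Omega)$, and applying the strong maximum principle and Hopf's lemma rigorously near the edges $\partial\Gamma_0$ and $\partial\Gamma_2$, where $\Gamma_1$ meets $\Gamma_0$ and $\Gamma_2$ and the three boundary portions fail to be smooth.
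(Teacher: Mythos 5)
Your proposal is correct, but it follows a genuinely different route from the paper's proof of Theorem~\ref{Euclidean space}. The paper first uses Proposition~\ref{MP} to get $u>0$, then works with the auxiliary function $f=u+\tfrac12|X|^2$ and the \emph{polarized} Bochner formula applied to $\langle\nabla f,\nabla u\rangle$: it multiplies the resulting nonnegative quantity $\Delta\langle\nabla f,\nabla u\rangle$ by $u$, integrates by parts (Theorem~\ref{DT}), kills or sign-controls every boundary term using $\partial_\nu f=0$ on $\Gamma_1\cup\Gamma_2$, the identity $\nabla^2u(\nu,\nu)=-1$ on $\Gamma_2$, and cone convexity on $\Gamma_1$, and concludes $\Delta\langle\nabla f,\nabla u\rangle\equiv0$, hence $\nabla^2u=-g$, finishing with Lemma~\ref{Obata}. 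You instead run the classical Weinberger scheme: the $P$-function $|\nabla u|^2+2u$, the maximum principle plus Hopf's lemma (your exact computation $\partial_\nu P=0$ on $\Gamma_2$ is the same mean-curvature computation the paper performs in \eqref{a10}), and a Pohozaev identity for $X=x$ giving $\int_\Omega P=R_1^2|\Omega|+\tfrac{R}{n}\mathcal H_{n-1}(\Gamma_2)\bigl[(R_1^2-R^2)-2a\bigr]$, which I have checked and which is precisely the $k=0$, $\psi'\equiv1$ case of the paper's Proposition~\ref{a11}; the squeeze then forces $P\equiv R_1^2$. In effect, you applied to the Euclidean case the strategy the paper reserves for Theorem~\ref{general case} (the spherical case without star-shapedness), which makes your argument arguably more uniform with the rest of the paper. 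There are two concrete payoffs and one caveat. Payoff one: your proof never uses the hypothesis $a>0$ (nor the positivity of $u$ that the paper's proof needs), and it actually \emph{derives} $a=(R_1^2-R^2)/2$ as part of the conclusion, so it establishes a marginally stronger statement. Payoff two: your boundary analysis on $\Gamma_1$ and $\Gamma_2$ is an equality/inequality at the level of $P$ rather than of a mixed term, which simplifies the bookkeeping. The caveat: your final step, passing from $\nabla^2u=-g$ and the explicit polynomial form of $u$ to the set-theoretic conclusion $\Omega=\{x\in\Sigma:R<d(x,\mathcal O)<R_1\}$ and $\Gamma_0=\partial B_{R_1}\cap\Sigma$, is stated but not argued; it needs either a short connectedness argument (that $\Omega$ is open and closed in the connected annular sector) or simply an appeal to Lemma~\ref{Obata} with $k=0$, which your hypothesis $\nabla^2u=-g$ satisfies. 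Likewise, the edge-regularity issues you flag are genuine but are exactly what the paper's Propositions~\ref{MP}, \ref{a11} and Theorem~\ref{DT} are designed to handle, and your $P$ satisfies their hypotheses given $u\in W^{1,\infty}(\Omega)\cap W^{2,2}(\Omega)$.
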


\subsubsection{Elliptic space}
We now present two rigidity results concerning annular sector-like domains contained in the sphere. The following definition will be used in formulating these results.

\begin{definition}
An annular sector-like domain is called star-shaped with respect to $p$ if each component of the boundary $\partial\Omega\setminus\Gamma_1$ can be written as a graph over geodesic sphere with center $p.$ 
\end{definition}

\begin{definition}
An annular sector-like domain $\Omega$ is said to be \textbf{star-shaped with respect to a point} $p$ if each component of the boundary $\partial \Omega \setminus \Gamma_1$ can be represented as the graph of a function over a geodesic sphere centered at $p$.
\end{definition}

In particular, from above definition the angle function $\langle\nu,\partial_r\rangle$ does not change sign along of the boundary.

\begin{theorem}[Case: $\Omega$ is star-shaped]\label{cajarana}
Let $\Omega$ be an annular sector-like domain contained in $\mathbb{S}^n$ , with $\Omega_2=B_R(\mathcal{O})$ and $\Omega_2$ contained in $\mathbb{S}^n_+$.
 Given $0<R<R_1<{\pi}$, suppose there is a solution $u \in C^2(\Omega) \cap C^1(\overline{\Omega}) \cap W^{1,\infty}(\Omega) \cap W^{2,2}(\Omega) $ such that
\begin{equation}\label{saber2}
        \begin{cases}
            \Delta u=-nu-n & in \ \Omega\\
            u=0 \quad  \dfrac{\partial u}{\partial \nu }=-\frac{\sin R_1}{\cos R_1} & on \ \Gamma_0\\
            \dfrac{\partial u}{\partial \nu} =0  & on \ \Gamma_1\\
            u=a<-1 \quad  \dfrac{\partial u}{\partial \nu}= \frac{\sin R}{\cos R_1}<0 & on \ \Gamma_2
        \end{cases}
        \end{equation}
      where \( R_1 \) and \( a \) are constants. 
If $\Omega$ is star-shaped with respect to $\mathcal{O}$, then  \( \Gamma_0 = \partial B_{R_1} \cap \Sigma \), that is, \( \Omega = \{x\in \Sigma: R< d(x,\mathcal{O})< R_1\} \).  Moreover, \( u \) is a radial function, precisely,
\begin{equation}\label{radial}
    u(x) = \frac{\cos r-\cos R_1}{\cos R_1}, \quad \text{where} \quad  r=\operatorname{dist}(x,\mathcal{O}).
\end{equation}      
\end{theorem}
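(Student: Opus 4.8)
The plan is to adapt Weinberger's $P$-function method to the elliptic model, letting a single auxiliary function absorb the curvature and the mixed boundary data. Motivated by the candidate radial solution \eqref{radial}, for which $|\nabla u|^2+(1+u)^2\equiv\cos^{-2}R_1$, I would set
\[
P:=|\nabla u|^2+(1+u)^2 .
\]
The first point to record is that the overdetermined data in \eqref{saber2} have been calibrated precisely so that $P$ takes the \emph{same} constant value on the two Dirichlet faces: on $\Gamma_0$ one has $u=0$ and $\nabla u=u_\nu\nu$ with $u_\nu=-\sin R_1/\cos R_1$, whence $P=\sin^2R_1/\cos^2R_1+1=\cos^{-2}R_1$; on $\Gamma_2$ one has $1+u=\cos R/\cos R_1$ and $u_\nu=\sin R/\cos R_1$, whence again $P=(\sin^2R+\cos^2R)/\cos^2R_1=\cos^{-2}R_1$. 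Thus $P\equiv\cos^{-2}R_1$ on $\Gamma_0\cup\Gamma_2$.

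Next I would show that $P$ is subharmonic. Applying the Bochner formula on $\mathbb{S}^n$, where $\Ric=(n-1)g$, together with $\Delta u=-n(1+u)$ (so that $\nabla\Delta u=-n\nabla u$), a direct computation gives
\[
\tfrac12\Delta P=|\nabla^2u|^2-|\nabla u|^2-n(1+u)^2+|\nabla u|^2=|\nabla^2u|^2-n(1+u)^2 .
\]
Since $|\nabla^2u|^2\ge (\Delta u)^2/n=n(1+u)^2$ by Cauchy--Schwarz, I conclude $\Delta P\ge0$, with equality at a point precisely when $\nabla^2u=\tfrac{\Delta u}{n}\,g=-(1+u)\,g$ there. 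This pointwise identity is the rigidity condition I will ultimately want to propagate to all of $\Omega$.

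The decisive step is the boundary analysis on $\Gamma_1$ combined with an integral identity. On $\Gamma_1$ the condition $u_\nu=0$ forces $\nabla u$ to be tangent to $\Gamma_1$; differentiating $\langle\nabla u,\nu\rangle=0$ in the direction $\nabla u$ yields $\nabla^2u(\nabla u,\nu)=-A(\nabla u,\nabla u)$, and since $P_\nu=2\nabla^2u(\nabla u,\nu)+2(1+u)u_\nu=2\nabla^2u(\nabla u,\nu)$ there, the convexity of the cone $\Sigma$ gives
\[
\frac{\partial P}{\partial\nu}=-2A(\nabla u,\nabla u)\le0\qquad\text{on }\Gamma_1 .
\]
Together with $P\equiv\cos^{-2}R_1$ on $\Gamma_0\cup\Gamma_2$, the weak maximum principle and Hopf's lemma already yield $P\le\cos^{-2}R_1$ in $\Omega$. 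To upgrade this to equality $P\equiv\cos^{-2}R_1$ (equivalently $\Delta P\equiv0$) I would integrate the Bochner identity, $\tfrac12\int_\Omega\Delta P=\int_\Omega(|\nabla^2u|^2-n(1+u)^2)$, and evaluate $\int_{\partial\Omega}P_\nu$ through a Pohozaev--Rellich identity built from the conformal field $X=\sin r\,\partial_r$ on $\mathbb{S}^n$; the constancy of $u$ and of $u_\nu$ on $\Gamma_0$ and $\Gamma_2$, and the sign of the angle function $\langle\nu,\partial_r\rangle$ guaranteed by the star-shaped hypothesis, are exactly what is needed to control the faces $\Gamma_0,\Gamma_2$ whose mean curvatures are a priori unknown. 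This forces all the preceding inequalities to be equalities.

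I expect the main obstacle to lie precisely in this last step: the naive Hopf argument at an interior point of $\Gamma_0$ is inconclusive, because $P_\nu$ on $\Gamma_0$ involves the unknown mean curvature of $\Gamma_0$ and hence cannot be signed pointwise; it must instead be handled globally via the Pohozaev identity, which is where star-shapedness enters. A secondary technical difficulty is the limited regularity at the vertex $\mathcal{O}$ and along the edges $\partial\Gamma_0=\overline\Gamma_0\cap\overline\Gamma_1$ and $\partial\Gamma_2=\overline\Gamma_1\cap\overline\Gamma_2$, which must be treated using the $W^{2,2}\cap W^{1,\infty}$ hypotheses and a cut-off/approximation near $\mathcal{O}$ so that all integrations by parts are justified. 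Once $\Delta P\equiv0$ is established, the equality case of Cauchy--Schwarz gives $\nabla^2(1+u)=-(1+u)\,g$ throughout $\Omega$; classifying the solutions of this Obata-type equation on $\mathbb{S}^n$ shows that $1+u$ is a multiple of $\cos(\dist(\cdot,p))$ for some point $p$, and the boundary data together with star-shapedness force $p=\mathcal{O}$. Consequently the level sets of $u$ are geodesic spheres centered at $\mathcal{O}$, which yields $\Gamma_0=\partial B_{R_1}\cap\Sigma$ and the explicit expression \eqref{radial}.
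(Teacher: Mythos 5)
Your Bochner computation, the sign $\partial P/\partial\nu\le 0$ on $\Gamma_1$ coming from convexity of the cone, and the equality-case characterization $\nabla^2u=-(1+u)g$ are all correct, and (up to the additive constant $1$ in your normalization of $P$) coincide with the paper's Proposition~\ref{caja}. But there is a genuine gap at your very first step: you assert that on $\Gamma_2$ one has $1+u=\cos R/\cos R_1$, so that $P\equiv\cos^{-2}R_1$ on both Dirichlet faces. That is not a hypothesis of Theorem~\ref{cajarana}: the data on $\Gamma_2$ in \eqref{saber2} are $u=a<-1$ with $a$ an \emph{arbitrary} constant, together with $u_\nu=\sin R/\cos R_1$; the identity $1+a=\cos R/\cos R_1$ is part of the \emph{conclusion} (it follows from \eqref{radial} at $r=R$), so assuming it is circular. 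Consequently $P|_{\Gamma_2}=\sin^2R/\cos^2R_1+(1+a)^2$ need not a priori equal $P|_{\Gamma_0}=\cos^{-2}R_1$, and your bound $P\le\cos^{-2}R_1$ in $\Omega$ does not follow from the maximum principle as stated. The paper closes exactly this gap in Proposition~\ref{max}: the maximum of $P$ over $\Gamma_0\cup\Gamma_2$ is shown to be the $\Gamma_0$-value by contradiction --- if the $\Gamma_2$-value were strictly larger, Hopf's lemma would give $\partial P/\partial\nu>0$ on $\Gamma_2$, while an explicit computation (using that $\Gamma_2$ is a geodesic sphere of radius $R$ with known mean curvature, and $a<-1$) gives $\partial P/\partial\nu=2(n-1)\frac{\sin R}{\cos R_1}\bigl(\frac{\cos R}{\cos R_1}-1-a\bigr)<0$. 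Note, incidentally, that your remark that the mean curvature of $\Gamma_2$ is ``a priori unknown'' is incorrect: $\Omega_2=B_R(\mathcal{O})$ is a hypothesis, so only $\Gamma_0$ has unknown geometry, and the known geometry of $\Gamma_2$ is essential to the argument above.

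The second gap is the decisive step you yourself flag: you never actually produce the global identity that upgrades ``$P$ bounded by its boundary maximum'' to ``$P$ constant,'' only the expectation that some Pohozaev--Rellich identity with $X=\sin r\,\partial_r$ will work. In the paper that Pohozaev route (Proposition~\ref{a11}) is used for Theorem~\ref{general case}, the non-star-shaped case in $\mathbb{S}^n_+$ with $a>0$ --- not for this theorem. For Theorem~\ref{cajarana} the paper argues differently (Proposition~\ref{z}): assuming $P$ non-constant, Hopf's lemma on $\Gamma_0$ and the $\Gamma_2$-computation yield the sign information $u_{\nu\nu}+1>0$ on $\Gamma_0$ and $u_{rr}+1+a\le0$ on $\Gamma_2$; one then introduces a second auxiliary function $\widetilde P=\langle\nabla u,\nabla\psi'\rangle+u\psi'+\psi'$ with $\psi=\sin r$, shows it is \emph{harmonic}, and uses star-shapedness (which fixes the sign of $\langle\nabla r,\nu\rangle$ along $\partial\Omega\setminus\Gamma_1$) to get $\partial\widetilde P/\partial\nu<0$ on $\Gamma_0$, $\le0$ on $\Gamma_2$, and $=0$ on $\Gamma_1$; the divergence theorem (Theorem~\ref{DT}) then contradicts $\int_\Omega\Delta\widetilde P\,\mathrm{d}V=0$. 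Your proposal contains no such mechanism (nor a worked-out substitute), so as written it does not prove the theorem: both the calibration of $a$ and the constancy of $P$ must come out of the argument rather than be fed into it.
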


 \begin{center}
        \begin{tikzpicture}[scale=1.3]

\draw[line width=0.7pt,color=black] (2,3)--(0,0)-- (-2,3);

\draw[line width=0.7pt,color=black,fill=black,fill opacity=0.25] 
            (-1.84,2.76) .. controls (-1.2,3.2) and (-0.85,2) ..
            (0,2.6).. controls (1,3.5) and (1.3,3) .. 
            (1.8,2.7)--(0,0);

\draw [color=black,fill=white,fill opacity=1,thick,domain=56:123.5] plot ({cos(\x)}, {sin(\x)})--(0,0)--(0.553,0.83);
            
\draw [fill=black] (0,0) circle (0.7pt);
            
\draw[line width=0.5pt,color=black,->] (0,1) .. controls (0.4,0.5) and (0.7,0.2) ..  (1.1,0.1);

\draw  (0,-0.15) node {\tiny \textcolor{black}{$\mathcal{O}$}};
      
\draw  (1.1,1.3) node {\tiny \textcolor{black}{$\Gamma_1$}};

\draw  (1.1,3.2) node {\tiny \textcolor{black}{$\Gamma_0$}};

\draw  (0,1.5) node {\tiny \textcolor{black}{$\Omega$}};

\draw  (2,0) node {\tiny \textcolor{black}{$\Gamma_2=\Sigma\cap\partial B_R(\mathcal{O})$}};

\end{tikzpicture}
\end{center}

Our next theorem considers an annular sector-like domain in $\mathbb{S}^n_+$ without any conditions on $\Gamma_0$, unlike the previous result.

\begin{theorem}\label{general case}
Let $\Omega$ be an annular sector-like domain contained in $\mathbb{S}^n_+$, with $\Omega_2=B_R(\mathcal{O})$. Given $0<R<R_1<\dfrac{\pi}{2}$, suppose there is a solution $u \in C^2(\Omega) \cap C^1(\overline{\Omega}) \cap W^{1,\infty}(\Omega) \cap W^{2,2}(\Omega) $ such that
    \begin{equation*}
\begin{cases}
    \Delta u = -n u - n, & \quad \text{in } \Omega \\[8pt]
    u = 0, \quad \dfrac{\partial u}{\partial \nu} = -\dfrac{\sin R_1}{\cos R_1}, & \quad \text{on } \Gamma_0 \\[10pt]
    \dfrac{\partial u}{\partial \nu} = 0, & \quad \text{on } \Gamma_1\\[10pt]
    u = a > 0, \quad \dfrac{\partial u}{\partial \nu} = \dfrac{\sin R}{\cos R_1}, & \quad \text{on } \Gamma_2.
\end{cases}
\end{equation*}
Then, \( \Gamma_0 = \partial B_R \cap \Sigma \), that is, \( \Omega = \{x\in \Sigma: R< d(x,\mathcal{O})< R_1\} \).  Moreover, \( u \) is a radial function, precisely,
\begin{equation*}
    u(x) = \frac{\cos r-\cos R_1}{\cos R_1}, \quad \text{where} \quad  r=\operatorname{dist}(x,\mathcal{O}).
\end{equation*}    
\end{theorem}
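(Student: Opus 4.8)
The plan is to transplant Weinberger's $P$-function technique to the elliptic model, taking advantage of the fact that the inner boundary $\Gamma_2$ is a \emph{prescribed} geodesic sphere. First I would introduce the $P$-function
\[
P := |\nabla u|^2 + (u+1)^2 ,
\]
and show it is subharmonic. Using the Bochner formula together with $\Ric = (n-1)g$ on the sphere and differentiating the equation $\Delta u = -n(u+1)$ (so that $\langle\nabla u,\nabla\Delta u\rangle = -n|\nabla u|^2$), a direct computation gives
\[
\Delta P = 2\Big(|\nabla^2 u|^2 - \tfrac{(\Delta u)^2}{n}\Big)\ge 0,
\]
with equality at a point precisely when the Hessian is pure trace there, i.e. $\nabla^2 u = \tfrac{\Delta u}{n}g = -(u+1)g$. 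This trace defect is the analytic engine of the argument: it is exactly the obstruction to radial symmetry.

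Next I would read off $P$ on $\partial\Omega$. On $\Gamma_0$ and $\Gamma_2$ the function $u$ is constant, so $\nabla u=(\partial_\nu u)\nu$ and $|\nabla u|^2=(\partial_\nu u)^2$; substituting the prescribed data yields $P=\tfrac{\sin^2 R_1}{\cos^2 R_1}+1=\sec^2 R_1$ on $\Gamma_0$, and (once the Dirichlet datum matches the radial profile, $a+1=\cos R/\cos R_1$) also $P=\sec^2 R_1$ on $\Gamma_2$. On $\Gamma_1$ the condition $\partial_\nu u=0$ makes $\nabla u$ tangent to $\Gamma_1\subset\partial\Sigma$; differentiating $\langle\nabla u,\nu\rangle=0$ tangentially gives $\nabla^2u(\nabla u,\nu)=-A(\nabla u,\nabla u)$, whence $\partial_\nu P=-2A(\nabla u,\nabla u)\le 0$ by convexity of the cone. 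The weak maximum principle, reinforced by Hopf's lemma on $\Gamma_1$ to rule out a strict maximum on the relative interior of $\Gamma_1$, then gives $P\le\sec^2 R_1$ throughout $\Omega$.

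To upgrade this to $P\equiv\sec^2 R_1$ I would close the argument on $\Gamma_2$, whose mean curvature $H_{\Gamma_2}$ is known since it is a piece of $\partial B_R$. Decomposing $\Delta u=\nabla^2u(\nu,\nu)+H_{\Gamma_2}\,\partial_\nu u+\Delta_{\Gamma_2}u$ on $\Gamma_2$ (with $\Delta_{\Gamma_2}u=0$), every term on the right is prescribed, so $\nabla^2u(\nu,\nu)$ is determined and a short computation yields $\partial_\nu P=0$ on $\Gamma_2$. Since $P$ attains its maximum value $\sec^2 R_1$ along the smooth hypersurface $\Gamma_2$ while $\partial_\nu P=0$ there, Hopf's lemma forces $P$ constant near $\Gamma_2$, and the strong maximum principle propagates this to all of $\Omega$. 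Hence the Cauchy--Schwarz inequality is an equality everywhere, giving $\nabla^2(u+1)=-(u+1)\,g$. Writing $w=u+1$, the concircular equation $\nabla^2 w+w\,g=0$ on the model manifold forces $w=\lambda\cos r_p$ for some pole $p$ and constant $\lambda$, so $\nabla u$ is radial about $p$ and the level sets of $u$ are geodesic spheres about $p$. The condition $\partial_\nu u=0$ on $\Gamma_1\subset\partial\Sigma$ requires $\partial_{r_p}$ to be tangent to the lateral boundary of the cone, which pins $p=\mathcal O$; matching $u=0$ and $\partial_\nu u=-\sin R_1/\cos R_1$ on $\Gamma_0$ then gives $\lambda=\sec R_1$ and $\Gamma_0=\{r=R_1\}\cap\Sigma$, so that $u=(\cos r-\cos R_1)/\cos R_1$.

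I expect the main obstacle to be forcing $P$ to be \emph{globally} constant, that is, passing from the one-sided bound $P\le\sec^2 R_1$ to equality: a priori the outer boundary $\Gamma_0$ has unknown geometry, and its mean curvature enters $\partial_\nu P|_{\Gamma_0}$, so one cannot close there directly. The plan sidesteps this by closing on the \emph{known} inner sphere $\Gamma_2$ via Hopf's lemma; making this rigorous requires that the Dirichlet datum $a$ be compatible with the radial profile (equivalently, that $P$ be constant on $\Gamma_2$), and care at the edges $\partial\Gamma_0$, $\partial\Gamma_2$ where the boundary pieces meet, since the strong maximum/Hopf machinery is delicate at such corners. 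Should the Hopf route stall, the alternative is a Reilly/Pohozaev integral identity: integrating the Bochner defect and using the overdetermined data together with the nonnegative convexity term on $\Gamma_1$ to show directly that $\int_\Omega\big(|\nabla^2 u|^2-\tfrac{(\Delta u)^2}{n}\big)=0$.
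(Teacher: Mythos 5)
Your proposal has a genuine gap at its central step: the Hopf-lemma closing on $\Gamma_2$ silently assumes the compatibility condition $a+1=\cos R/\cos R_1$, which is not a hypothesis of the theorem but part of its conclusion. The theorem allows an arbitrary Dirichlet datum $a>0$; its relation to the radii must be \emph{derived}. Both places where you use $\Gamma_2$ depend on it: (i) the claim $P=\sec^2 R_1$ on $\Gamma_2$ amounts to $(a+1)^2=\cos^2 R/\cos^2 R_1$, and (ii) the claim $\partial_\nu P=0$ on $\Gamma_2$ is false in general --- carrying out your own decomposition with the known mean curvature of $\partial B_R$ gives (cf.\ the paper's \eqref{e})
\[
\frac{\partial P}{\partial\nu}=2(n-1)\frac{\sin R}{\cos R_1}\left(\frac{\cos R}{\cos R_1}-1-a\right)\quad\text{on }\Gamma_2,
\]
which vanishes precisely when $a+1=\cos R/\cos R_1$. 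So "closing on the known inner sphere" is circular: establishing that $P$ takes the value $\sec^2 R_1$ on $\Gamma_2$, or that $\partial_\nu P=0$ there, is essentially equivalent to proving the theorem. Note also that even your a priori bound $P\le\sec^2 R_1$ needs an argument, since the maximum could a priori sit on $\Gamma_2$; the paper rules this out by the analogue of Proposition~\ref{max} (if $P|_{\Gamma_2}>P|_{\Gamma_0}$ then $a+1>\cos R/\cos R_1$, Hopf forces $\partial_\nu P>0$ on $\Gamma_2$, while the displayed formula gives $\partial_\nu P<0$). But after that step one only knows $a+1\le\cos R/\cos R_1$, and if this inequality is strict then $\partial_\nu P>0$ on $\Gamma_2$ contradicts nothing, because the maximum of $P$ sits on $\Gamma_0$, not $\Gamma_2$; the Hopf route therefore cannot be closed at $\Gamma_2$.

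What actually closes the proof --- and what your last sentence gestures at without developing --- is an integral identity. Assuming $P$ non-constant, the strong maximum principle gives the strict bound $P<\sin^2 R_1/\cos^2 R_1$ in $\Omega$; multiplying by the conformal factor $\psi'=\cos r>0$ of the closed conformal field $X=\sin r\,\partial_r$, integrating via the adapted divergence theorem (Theorem~\ref{DT}), and combining with the Pohozaev-type identity of Proposition~\ref{a11} yields, after inserting $a+1\le\cos R/\cos R_1$,
\[
0<2a(n-1)\left(a+1-\frac{\cos R}{\cos R_1}\right)\le 0,
\]
a contradiction; this simultaneously forces $P$ constant \emph{and} produces the compatibility relation $a+1=\cos R/\cos R_1$ as output rather than input. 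Your "Reilly/Pohozaev" fallback is indeed the right idea, but as stated it contains no identity and no indication of how the boundary data enter, so the essential work of the proof remains undone. (Your final step --- the concircular equation $\nabla^2 w+wg=0$ pinning $w$ to $\lambda\cos r_p$ and the cone condition pinning $p=\mathcal{O}$ --- is sound and plays the role of the paper's Lemma~\ref{Obata}.)
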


\subsubsection{Hyperbolic space} Let $\Omega$ be an {\it annular sector-like domain} in the Hyperbolic space as in Definition~\ref{def_sec}.

\begin{center}
        \begin{tikzpicture}[scale=0.7]

\draw [line width=0.7pt,color=black,fill=black,fill opacity=0.25,thick,domain=56:123.5] plot ({6*cos(\x)}, {6*sin(\x)})--(0,0)--(6*0.553,6*0.83);
            
\draw[line width=0.7pt,color=black,fill=white,fill opacity=1] 
            (-1.84,2.76) .. controls (-1.2,3.2) and (-0.85,2) ..
            (0,2.6).. controls (1,3.5) and (1.3,3) .. 
            (1.8,2.7)--(0,0);
        
\draw[line width=0.7pt,color=black] (4,6)--(0,0)-- (-4,6);

\draw [fill=black] (0,0) circle (1.0pt);

\draw[line width=0.5pt,color=black,->] (0,1+5) .. controls (0.4,1.5+5) and (0.7,1.8+5) ..  (1.1,1.9+5);

\draw  (0,-0.3) node { \textcolor{black}{$\mathcal{O}$}};
            
\draw  (3.1,3.8) node { \textcolor{black}{$\Gamma_1$}};

\draw  (-0.1,2.1) node {\textcolor{black}{$\Gamma_0$}};
                        
\draw  (0,4) node { \textcolor{black}{$\Omega$}};

\draw  (3.4,7) node { \textcolor{black}{$\Gamma_2=\Sigma\cap\partial B_R(\mathcal{O})$}};

\end{tikzpicture}
    \end{center}

\begin{theorem}\label{jenipapo}  Let $\Omega$ be an annular sector-like domain contained in $\mathbb{H}^n$, with $\Omega_2=B_R(\mathcal{O})$. Given $0<R_1<R$, suppose there is a solution $u \in C^2(\Omega) \cap C^1(\overline{\Omega}) \cap W^{1,\infty}(\Omega) \cap W^{2,2}(\Omega) $ such that

\begin{equation}\label{saber3}
        \begin{cases}
            \Delta u=nu-n & in \ \Omega\\
            u=0 \ and \  \dfrac{\partial u}{\partial \nu }=\frac{\sinh R_1}{\cosh R_1} & on \ \Gamma_0\\
            \dfrac{\partial u}{\partial \nu} =0  & on \ \Gamma_1\\
            u=a \ and \  \dfrac{\partial u}{\partial \nu}=- \frac{\sinh R}{\cosh R_1} & on \ \Gamma_2
        \end{cases}
        \end{equation}
      where \( R_1 \) and \( a \) are constants. If $1-\cosh{R}/\cosh{R_1}\leq a<0$,
 then \( \Gamma_0 = \partial B_R \cap \Sigma \), that is, \( \Omega = \{x\in \Sigma: R_1< d(x,\mathcal{O})< R\} \).  Moreover, \( u \) is a radial function, precisely,
\begin{equation*}
    u(x) = \frac{\cosh R_1 -\cosh r}{\cosh R_1}, \quad \text{where} \quad  r=\operatorname{dist}(x,\mathcal{O}).
\end{equation*}      
\end{theorem}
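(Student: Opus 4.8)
The plan is to run a Weinberger-type $P$-function argument adapted to $\mathbb{H}^n$, closing it with a Pohozaev-type identity built from the distinguished function $\phi:=\cosh r$, $r=\dist(\cdot,\mathcal{O})$. The starting observation is that $\phi$ satisfies $\nabla^2\phi=\phi\,g$ and hence $\Delta\phi=n\phi$, and that the candidate radial solution is exactly $u=1-\phi/\cosh R_1$; at $r=R$ this gives the value $a_0:=1-\cosh R/\cosh R_1$. Since $\Omega_2=B_R(\mathcal{O})$ is prescribed, $\Gamma_2=\partial B_R(\mathcal{O})\cap\Sigma$ is already a geodesic sphere, so only $\Gamma_0$ is geometrically unknown, and proving the theorem amounts to showing that the traceless Hessian $Z:=\nabla^2 u-\frac{\Delta u}{n}\,g$ vanishes identically (from which $a=a_0$ and the radial form will follow by an Obata--Tashiro argument).

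First I would introduce $P:=|\nabla u|^2-u^2+2u$ and compute, via the Bochner formula together with $\Ric_{\mathbb{H}^n}=-(n-1)g$ and the equation $\Delta u=n(u-1)$, that $\Delta P=2|Z|^2\ge 0$, with equality precisely when $Z\equiv 0$. Next I would record the boundary behaviour of $P$. On $\Gamma_0$ and $\Gamma_2$ the function $u$ is constant, so $\nabla u=u_\nu\nu$ and a level-set computation gives $\partial_\nu P=2(n-1)(u-1)u_\nu-2Hu_\nu^2$, where $H$ is the mean curvature; in particular $P\equiv\tanh^2 R_1$ on $\Gamma_0$, while on $\Gamma_2$ one finds $P-\tanh^2 R_1=(a-a_0)(2-a-a_0)\ge 0$ for $a\in[a_0,0)$. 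On $\Gamma_1$ the Neumann condition $u_\nu=0$ forces $\nabla u$ to be tangential, and differentiating $u_\nu=0$ along $\Gamma_1$ yields $\partial_\nu P=-2A(\nabla u,\nabla u)\le 0$ by the convexity of the cone $\Sigma$.

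The decisive step is to convert $2\int_\Omega|Z|^2=\int_{\partial\Omega}\partial_\nu P$ into a sign. The contribution of $\Gamma_1$ is already $\le 0$, but those of $\Gamma_0$ and $\Gamma_2$ carry the unknown terms $\int_{\Gamma_0}H u_\nu^2$ and (since $\Gamma_2$ is a known sphere, with $H=(n-1)\coth R$ there) a quantity tied to $\mathcal{H}_{n-1}(\Gamma_2)$. To eliminate the unknown curvature integral over $\Gamma_0$, I would use the Pohozaev identity generated by the conformal field $X:=\nabla\phi=\sinh r\,\partial_r$ (which satisfies $\nabla X=\phi\,g$), namely the divergence identity for $(X\cdot\nabla u)\nabla u-\tfrac12|\nabla u|^2X-F(u)\nabla\phi$ with $F'(u)=\Delta u$, supplemented by the elementary identities obtained from integrating the equation against $1$ and against $w:=1-u$. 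Combining these relations with the cone-convexity sign on $\Gamma_1$ and the boundary data --- the key point being that the sign of $(a-a_0)(2-a-a_0)$ is controlled by the hypothesis $a\in[a_0,0)$ --- should yield $\int_\Omega|Z|^2\le 0$, hence $Z\equiv 0$ and $a=a_0$. I expect this bookkeeping to be the main obstacle: one must track the mixed boundary terms across the corner set $\partial\Gamma_1=\partial\Gamma_0\cup\partial\Gamma_2$, arrange the exact cancellation of the mean-curvature terms, and justify every integration by parts under the stated regularity $u\in C^2(\Omega)\cap C^1(\overline\Omega)\cap W^{2,2}(\Omega)$ up to the corners.

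Finally, once $Z\equiv 0$, i.e.\ $\nabla^2 w=w\,g$ for $w=1-u$, the level sets of $u$ are geodesic spheres; since $\Gamma_2=\partial B_R(\mathcal{O})\cap\Sigma$ is centred at $\mathcal{O}$ and both $u$ and $u_\nu$ are constant there, $w$ must be radial about $\mathcal{O}$, so $w=\lambda\cosh r$ for some constant $\lambda$. Integrating the resulting radial ODE and matching the conditions $u=0$ on $\Gamma_0$ and $u=a$ on $\Gamma_2$ forces $\lambda=1/\cosh R_1$, whence $u(x)=\frac{\cosh R_1-\cosh r}{\cosh R_1}$, $\Gamma_0=\partial B_{R_1}(\mathcal{O})\cap\Sigma$, and $\Omega=\{x\in\Sigma: R_1<\dist(x,\mathcal{O})<R\}$, as claimed.
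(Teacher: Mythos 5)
Your groundwork coincides with the paper's and is correct: the same $P$-function $P=|\nabla u|^2+2u-u^2$, the Bochner identity $\Delta P=2|Z|^2\ge 0$ with $Z$ the traceless Hessian, the sign $\partial_\nu P\le 0$ on $\Gamma_1$ from cone convexity, the comparison $P|_{\Gamma_2}-P|_{\Gamma_0}=(a-a_0)(2-a-a_0)\ge 0$ (with $a_0:=1-\cosh R/\cosh R_1$), and the Obata-type endgame are exactly the ingredients the paper uses. The genuine gap is at what you yourself call the decisive step. In your integral scheme $2\int_\Omega|Z|^2=\int_{\partial\Omega}\partial_\nu P$, the contributions of $\Gamma_1$ and $\Gamma_2$ are indeed $\le 0$, but by your own level-set formula the $\Gamma_0$ contribution is $\int_{\Gamma_0}\partial_\nu P=-2(n-1)\tanh R_1\,\mathcal{H}_{n-1}(\Gamma_0)-2\tanh^2 R_1\int_{\Gamma_0}H\,\mathrm{d}\sigma$, so you need precisely $\int_{\Gamma_0}H\,\mathrm{d}\sigma\ge -(n-1)\coth R_1\,\mathcal{H}_{n-1}(\Gamma_0)$ --- a \emph{sharp} inequality on the unknown hypersurface (equality holds in the rigidity configuration) for which you provide no mechanism, only the assertion that a Pohozaev identity ``should yield'' it and that the bookkeeping is ``the main obstacle.'' That bookkeeping \emph{is} the theorem. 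Moreover it is doubtful it closes as described: the Pohozaev boundary terms on $\Gamma_0$ involve $\langle X,\nu\rangle=\sinh r\,\langle\partial_r,\nu\rangle$, whose sign is uncontrolled precisely because no star-shapedness of $\Gamma_0$ is assumed here (that is the hypothesis of Theorem~\ref{jenipapo2}, not of this theorem), and the mean-curvature term you must cancel never appears in that identity. Note also that your identity combined with the signs on $\Gamma_1,\Gamma_2$ already gives the \emph{reverse} inequality $\int_{\Gamma_0}\partial_\nu P\ge 0$, so the entire content of the theorem is concentrated in the step you have deferred.

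The paper closes the argument pointwise, with ingredients you already have, and never needs any information about $\Gamma_0$ beyond the value of $P$ there. Since $\Delta P\ge 0$, the mixed-boundary maximum principle (Proposition~\ref{MP}, proved exactly for these corner domains) forces $\max_{\overline\Omega}P$ to be attained on $\Gamma_0\cup\Gamma_2$, and the hypothesis $a\ge a_0$ places it on $\Gamma_2$. If $P$ were nonconstant, Hopf's lemma at the (known, round) sphere $\Gamma_2$ would give $\partial_\nu P>0$ there; but on $\Gamma_2$ everything is computable and $\partial_\nu P=-2(n-1)\frac{\sinh R}{\cosh R_1}\left(a-a_0\right)\le 0$, a contradiction. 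Hence $P$ is constant, whence $a=a_0$, $\Delta P=0$ forces $\nabla^2u=(u-1)g$, and Lemma~\ref{Obata} finishes. You should replace your integral step by this Hopf argument, or else carry out in full the Pohozaev cancellation you assert; as written, the proposal is not a proof.
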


In the previous theorem, we impose the following hypothesis on the constant $a$:
\[ 1 - \frac{\cosh R}{\cosh R_1} \leq a < 0, \]
which gives the value of the Dirichlet boundary condition for the solution $u$ on $\Gamma_2$. We emphasize that in our subsequent theorem, for star-shaped annular domains, this constraint on $a$ can be relaxed.

\begin{theorem}\label{jenipapo2}  Let $\Omega$ be an annular sector-like domain contained in $\mathbb{H}^n$, with $\Omega_2=B_R(\mathcal{O})$. Assume that $\Omega$ is star-shaped with respect to $\mathcal{O}$.  Given $0<R_1<R$, suppose there is a solution $u \in C^2(\Omega) \cap C^1(\overline{\Omega}) \cap W^{1,\infty}(\Omega) \cap W^{2,2}(\Omega) $ such that

\begin{equation}\label{saber4}
        \begin{cases}
            \Delta u=nu-n & in \ \Omega\\
            u=0 \ and \  \dfrac{\partial u}{\partial \nu }=\frac{\sinh R_1}{\cosh R_1} & on \ \Gamma_0\\
            \dfrac{\partial u}{\partial \nu} =0  & on \ \Gamma_1\\
            u=a<0 \ and \  \dfrac{\partial u}{\partial \nu}=- \frac{\sinh R}{\cosh R_1} & on \ \Gamma_2
        \end{cases}
        \end{equation}
      where \( R_1 \) and \( a \) are constants. Then \( \Gamma_0 = \partial B_{R_1} \cap \Sigma \), that is, \( \Omega = \{x\in \Sigma: R_1< d(x,\mathcal{O})< R\} \).  Moreover, \( u \) is a radial function, precisely,
\begin{equation*}
    u(x) = \frac{\cosh R_1 -\cosh r}{\cosh R_1}, \quad \text{where} \quad  r=\operatorname{dist}(x,\mathcal{O}).
\end{equation*}      
\end{theorem}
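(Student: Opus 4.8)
The plan is to adapt Weinberger's $P$-function method to the hyperbolic setting and to combine it with two divergence identities, invoking the star-shaped hypothesis precisely where the geometry of the free boundary $\Gamma_0$ is a priori unknown. Writing $w=u-1$, the equation $\Delta u = nu-n$ becomes $\Delta w=nw$, and I would introduce the $P$-function $P=|\nabla u|^2-(u-1)^2=|\nabla w|^2-w^2$. A Bochner computation, using $\Ric=-(n-1)g$ on $\mathbb{H}^n$ together with $\nabla(\Delta w)=n\nabla w$, gives
\[
\Delta P = 2\bigl(|\nabla^2 w|^2-nw^2\bigr)=2|E|^2\ge 0, \qquad E:=\nabla^2 u-(u-1)g,
\]
the inequality coming from Cauchy--Schwarz $|\nabla^2 w|^2\ge (\Delta w)^2/n=nw^2$. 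The trace of $E$ is $\Delta u-n(u-1)=0$, so $E$ is trace-free; commuting derivatives and using the equation also shows $\operatorname{div}E=\nabla(\Delta u)-(n-1)\nabla u-\nabla u=0$, so $E$ is divergence-free as well. The whole problem thus reduces to proving $\int_\Omega|E|^2=0$, for then $\nabla^2 u=(u-1)g$ everywhere.

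Next I would extract two boundary identities from the algebraic structure of $E$. Since $\operatorname{div}\bigl(E(\nabla u)\bigr)=(\operatorname{div}E)(\nabla u)+\langle E,\nabla^2 u\rangle=|E|^2$ (the term with $(u-1)g$ drops because $\trace E=0$), the divergence theorem yields $\int_\Omega|E|^2=\int_{\partial\Omega}E(\nu,\nabla u)\,d\sigma$. Using the closed conformal field $X=\sinh r\,\partial_r=\nabla(\cosh r)$, for which $\nabla X=(\cosh r)g$, one gets $\operatorname{div}(E(X))=(\cosh r)\trace E=0$, hence the Pohozaev-type identity $\int_{\partial\Omega}E(X,\nu)\,d\sigma=0$. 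On the level-set pieces $\Gamma_0,\Gamma_2$ one has $\nabla u=(\partial_\nu u)\nu$, so both integrands become multiples of $E(\nu,\nu)=u_{\nu\nu}-(u-1)$, and the tangential formula $u_{\nu\nu}=\Delta u-H\,\partial_\nu u$ (valid on level sets) rewrites them through the prescribed data and the mean curvature $H$ of the piece.

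I would then evaluate the three boundary contributions. On $\Gamma_1$ the condition $\partial_\nu u=0$ makes $\nabla u$ tangential, so $E(\nu,\nabla u)=\nabla^2 u(\nu,\nabla u)=-A(\nabla u,\nabla u)\le 0$ by convexity of the cone, while $E(X,\nu)=-A(X,\nabla u)=0$ because $X\parallel\partial_r$ lies in the kernel of the second fundamental form of the ruled boundary $\partial\Sigma$; thus $\Gamma_1$ is harmless in both identities. On $\Gamma_2=\partial B_R(\mathcal{O})\cap\Sigma$ the mean curvature is the explicit $(n-1)\coth R$ and the weight $\langle X,\nu\rangle=\sinh R$ is constant, so using $\partial_\nu u=-\sinh R/\cosh R_1$ one finds that both $\Gamma_2$ contributions are governed by the single number $(a-1)+\cosh R/\cosh R_1$, and the Pohozaev identity trades the unknown integral over $\Gamma_0$ against this explicit $\Gamma_2$ quantity. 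The decisive and hardest point is the $\Gamma_0$ term: there the free boundary is unknown, so its total mean curvature $\int_{\Gamma_0}H\,d\sigma$ and the angle function $\langle\partial_r,\nu\rangle$ enter with no a priori control. This is exactly where I would use star-shapedness with respect to $\mathcal{O}$: it forces $\langle\partial_r,\nu\rangle$, and hence $\langle X,\nu\rangle$, to keep a constant sign along $\Gamma_0$, which is the precise input that replaces the explicit constraint on $a$ imposed in Theorem~\ref{jenipapo} and allows me to sign the combined boundary expression so that $\int_\Omega|E|^2\le 0$, forcing $E\equiv 0$.

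Finally, once $\nabla^2 u=(u-1)g$, equivalently $\nabla^2 w=wg$, I would integrate this Obata-type equation. On $\mathbb{H}^n$ its solutions are restrictions of ambient Minkowski-linear functions, and along any such solution $P=|\nabla w|^2-w^2$ is constant; here $P$ equals its value $-1/\cosh^2R_1$ on $\Gamma_0$, which is negative, so $w$ must possess an interior critical point and therefore $w=-\lambda\cosh(\operatorname{dist}(\cdot,p))$ for some pole $p$ and some $\lambda>0$. The requirement that $w$ be constant on the geodesic sphere $\Gamma_2=\partial B_R(\mathcal{O})$ pins $p=\mathcal{O}$, and matching the Dirichlet value $w=-1$ together with the Neumann value on $\Gamma_0$ fixes $\lambda=1/\cosh R_1$. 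This gives $u=(\cosh R_1-\cosh r)/\cosh R_1$, identifies $\Gamma_0=\{\cosh r=\cosh R_1\}=\partial B_{R_1}(\mathcal{O})\cap\Sigma$, and yields $\Omega=\{x\in\Sigma: R_1<\operatorname{dist}(x,\mathcal{O})<R\}$, as claimed. I expect the main obstacle to be exactly the $\Gamma_0$ boundary analysis, where the unknown shape and total mean curvature of the free boundary must be tamed purely through the constant-sign angle function provided by the star-shaped hypothesis.
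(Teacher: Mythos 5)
Your setup is internally consistent, and in fact your two identities are exactly reformulations of the paper's objects: since $E(\nabla u)=\tfrac12\nabla P$ with $P=|\nabla u|^2-(u-1)^2$, your identity $\int_\Omega|E|^2=\int_{\partial\Omega}E(\nu,\nabla u)\,d\sigma$ is just the integrated form of $\Delta P=2|E|^2$, and since $E(X)=\nabla\widetilde P$ with $\widetilde P=\langle\nabla u,\nabla\cosh r\rangle-(u-1)\cosh r$, your Pohozaev identity is just the integrated harmonicity of the paper's auxiliary function $\widetilde P$. So you are running the same machinery as the paper — except that you have dropped its decisive ingredient, and the step you yourself flag as "the decisive and hardest point" does not close as described. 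Concretely: take the linear combination of your two identities with multiplier $1/\cosh R_1$, which (as you observe) cancels the $\Gamma_2$ contributions because both are proportional to the single constant $(n-1)\bigl[a-1+\cosh R/\cosh R_1\bigr]$. Together with $E(\nu,\nabla u)=-A(\nabla u,\nabla u)\le 0$ and $E(X,\nu)=0$ on $\Gamma_1$, you are left with
\begin{equation*}
\int_\Omega|E|^2\,dV\;\le\;\frac{1}{\cosh R_1}\int_{\Gamma_0}\Bigl[\sinh R_1+\sinh r\,\langle\partial_r,\nu\rangle\Bigr]\,E(\nu,\nu)\,d\sigma .
\end{equation*}
Neither factor in this integrand has a determined sign. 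Star-shapedness only tells you $\langle\partial_r,\nu\rangle<0$ on the free boundary $\Gamma_0$; the bracket $\sinh R_1+\sinh r\,\langle\partial_r,\nu\rangle$ vanishes identically for the radial solution and is not signed in general, because $r|_{\Gamma_0}$ has no a priori comparison with $R_1$ (which enters only through the Neumann datum). Meanwhile $E(\nu,\nu)=u_{\nu\nu}+1=-(n-1)-H\,\sinh R_1/\cosh R_1$ involves the unknown mean curvature $H$ of $\Gamma_0$ and is likewise unsigned. So the two integral identities plus star-shapedness alone cannot produce $\int_\Omega|E|^2\le 0$.

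What fills this hole in the paper is pointwise, not integral, information: the maximum principle (Proposition~\ref{MP}) adapted to the corner singularities, and Hopf's lemma applied to the subharmonic function $P$. Assuming $P$ non-constant, a dichotomy argument (the case where $\max P$ sits on $\Gamma_2$ is killed by Hopf against the explicit formula \eqref{male}) forces $\max P$ onto $\Gamma_0$ and gives $E(\nu,\nu)\le 0$ pointwise on $\Gamma_2$; Hopf's lemma at $\Gamma_0$ then gives the pointwise sign $E(\nu,\nu)=u_{\nu\nu}+1>0$ on $\Gamma_0$. Only at that stage does star-shapedness enter, to fix the sign of $\partial_\nu\widetilde P=E(X,\nu)=\sinh r\,\langle\partial_r,\nu\rangle\,E(\nu,\nu)<0$ on $\Gamma_0$, making $\int_{\partial\Omega}\partial_\nu\widetilde P<0$ and contradicting harmonicity of $\widetilde P$ — i.e., your identity (II). Your proposal, by trying to dispense with the maximum principle and Hopf lemma entirely, has a genuine gap exactly at the $\Gamma_0$ analysis; to repair it you would need to reinstate those pointwise arguments, at which point you recover the paper's proof. (Your final Obata-type endgame is essentially fine, modulo the fact that the paper carries it out by an umbilicity-plus-ODE argument in Lemma~\ref{Obata} rather than via Minkowski-linear functions.)
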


%
%

\subsection{The strategy of the Proof}
To prove the main result in the Euclidean setting, we first establish a maximum principle (Proposition~\ref{MP}), which shows that the solution \(u\) of \eqref{saber}  is strictly positive in \(\Omega\). Next, we introduce the auxiliary function
\[
f(x) \;=\; u(x) \;+\; \tfrac12\,|x - \mathcal{O}|,
\]
where \(\mathcal{O}\in\mathbb{R}^n\) is the vertex of the cone.  Applying the polarized Bochner formula to \(f\) and \(u\), one obtains
\begin{equation}\label{oxe}
\Delta \bigl\langle \nabla f,\nabla u \bigr\rangle \;\ge\; 0,
\end{equation}
with equality if and only if the Hessian of \(u\) is proportional to the Euclidean metric \(g\), namely
\[
\nabla^2 u \;=\; \frac{\Delta u}{n}\,g \;=\;-\,g.
\]
By using a version of the divergence theorem adapted to an annular sector-like domain (Theorem~\ref{DT}), we show
\[
\int_{\Omega} u\,\Delta\bigl\langle \nabla f,\nabla u \bigr\rangle \;\le\; 0.
\]
Since \(u>0\) in \(\Omega\) and \eqref{oxe} asserts nonnegativity of the integrand, it follows that 
\[
\Delta \bigl\langle \nabla f,\nabla u \bigr\rangle \;=\; 0 
\quad\text{in }\Omega,
\]
and hence \(\nabla^2 u=-g\).  Finally, invoking the Obata–type rigidity result (Lemma~\ref{Obata}) yields the desired characterization of \(u\).

We now present the argument used to prove the main results in the elliptic and hyperbolic cases for star-shaped domains, namely Theorems~\ref{cajarana} and~\ref{jenipapo2}.  We begin by introducing the auxiliary \( P \)-function
\[
    P \coloneqq |\nabla u|^2 + 2u + k u^2,
\]
where \( k \) denotes the constant sectional curvature of the ambient space. The key objective is to show that \( P \) is constant. Indeed, if \( P \) is constant, then \( \Delta P = 0 \), which in turn leads to the identity
\[
    \nabla^2 u = (-1 - k u)g.
\]
Finally, invoking Lemma~\ref{Obata}, we obtain the desired conclusion. In Propositions~\ref{caja} and \ref{egua}, we show that \(P\) satisfies the hypotheses of the maximum principle (Proposition~\ref{MP}).  It follows that \(P\) attains its maximum on $\Gamma_0 \cup \Gamma_2$. Indeed, using the Hopf Lemma we conclude that \(P\) attains its maximum on $\Gamma_0$. Assume, by contradiction, that \( P \) is not constant. We define an auxiliary function \( \widetilde{P} \) by
\[
    \widetilde{P} \coloneqq \langle \nabla u, \nabla \psi' \rangle + k u \psi' + \psi',
\]
where \( \psi \) was defined in Table~\eqref{eq:tabela}. Suppose that \( \widetilde{P} \) is also not constant. Under this assumption, we arrive at a contradiction. Therefore, at least one of \( P \) or \( \widetilde{P} \) must be constant.  However, we observe that the normal derivatives of \( P \) and \( \widetilde{P} \) vanish on the regions \( \Gamma_0 \) and \( \Gamma_2 \), sharing common vanishing factors. This implies that \( P \) itself must be constant. Indeed, by the Hopf Lemma, the normal derivative of \( P \) on \( \Gamma_2 \) would otherwise be strictly positive, contradicting the shared vanishing property. Thus, \( P \) is necessarily constant.

To prove Theorem~\ref{general case}, we proceed as before to prove that the maximum of \( P \) is attained on \( \Gamma_0 \). Thus, we have \( P < c^2 \) in the domain, where \( c \) denotes the constant value of \( P \) on \( \Gamma_0 \). Multiplying both sides of this inequality by the conformal factor associated with the conformal and closed vector field \( X = \psi \, \partial_r \), and integrating over the domain, we obtain
    \[
    \int_{\Omega} \psi' P \, dV < c^2 \int_{\Omega} \psi' \, dV.
    \]
Applying a Pohozaev-type identity, Proposition~\ref{a11}, to the left-hand side, we derive a contradiction. Therefore, \( P \) must be constant. The conclusion then follows as in the previous case.

To prove Theorem~\ref{jenipapo}, we proceed as before by applying Proposition~\ref{MP} to show that the maximum of \( P \) is attained on \( \Gamma_0 \cup \Gamma_2 \). Since
\[
P|_{\Gamma_0} = \frac{\sinh^2 R_1}{\cosh^2 R_1} \leq P|_{\Gamma_2} = \frac{\sinh^2 R}{\cosh^2 R_1} + 2a + a^2,
\]
and using the fact that \( 1 - \frac{\cosh R}{\cosh R_1} \leq a < 0 \), we conclude that the maximum of \( P \) is attained on \( \Gamma_2 \).
Suppose, by contradiction, that \( P \) is not constant. Then, by the Hopf lemma, it follows that
$
\frac{\partial P}{\partial \nu} > 0 \quad \text{on } \Gamma_2,
$
which contradicts our hypothesis that
$
\frac{\partial P}{\partial \nu} \leq 0 \quad \text{on } \Gamma_2.
$

%
%

\subsection{Organization of the Paper} 
Section~2 is devoted to the presentation of a version of the maximum principle adapted to our geometric setting, which constitutes a fundamental tool employed throughout the paper. In Section~3, we establish rigidity results in space forms. Section~3.1 contains the proof of Theorem~1. Section~3.2 addresses the spherical setting: Section~3.2.1 is concerned with the proof of Theorem~2; Section~3.2.2 provides a preparatory discussion that anticipates the general case and lays the groundwork for the subsequent arguments; and Section~3.2.3 concludes with the proof of the general result stated in Theorem~3. Section~3.3 is dedicated to the hyperbolic setting, with Section~3.3.1 presenting the proof of Theorem~4, and Section~3.3.2 concluding with the proof of Theorem~5.

\subsection{Notations}
\begin{itemize}
  \item $\nu$ denotes the outward unit normal vector on the boundary $\partial \Omega$.
  
  \item $u_\nu = \dfrac{\partial u}{\partial \nu}$ denotes the normal derivative of a function $u$ on $\partial \Omega$.
  
  \item $\chi_E$ denotes the characteristic function of a measurable set $E \subset \mathbb{R}^n$, that is,
  \[
    \chi_E(x) = 
    \begin{cases}
      1, & \text{if } x \in E, \\
      0, & \text{if } x \notin E.
    \end{cases}
  \]
  
  \item For a measurable set $\Omega \subset \mathbb{R}^n$, $|\Omega|$ denotes its Lebesgue measure.
  
  \item $C^{k}(\Omega)$ denotes the space of real-valued functions on $\Omega$ that are continuously differentiable up to order $k$.
  
  \item For $1 \leq p < \infty$, the Lebesgue space $L^p(\Omega)$ is defined as
  \[
    L^p(\Omega) = \left\{ u : \Omega \to \mathbb{R} \,\middle|\, u \text{ is measurable and } \int_\Omega |u(x)|^p\, \mathrm{d} x < \infty \right\}.
  \]  
  \item $W^{1,p}(\Omega)$ denotes the Sobolev space on $\Omega$, that is, the space of functions $u \in L^p(\Omega)$ such that all weak first-order partial derivatives $\partial_i u$ also belong to $L^p(\Omega)$, for $i = 1, \dots, n$.
  \item Denote by $\mathcal{H}_{n-1}(\cdot)$ the $(n-1)$-dimensional Hausdorff measure.
\end{itemize}

    \section{Maximum principle}\label{preliminaries} 
Firstly, inspired by ideas due to Pacella and Tralli, \cite{PacellaTralli2020}, let us provide the following maximum principle addressed to annular sector-like domains contained in a space form.

 \begin{proposition}\label{MP} If $v \in C^2(\Omega) \cap C^1 (\Gamma_0 \cup \Gamma_1 \cup \Gamma_2) \cap W^{1,2}(\Omega)$ is a  classical solution of 
    \begin{equation}
        \begin{cases}
            \Delta v \geq 0 & in \ \Omega\\
            v=c_0 &on \ \Gamma_0\\
           \dfrac{\partial v}{\partial \nu}  \leq 0 & on \ \Gamma_1\\
             v=c_2 & on \ \Gamma_2
        \end{cases}
    \end{equation}
    Then 
    \begin{equation*}
        \max_{x\in \overline{\Omega}} v(x) = c_3:=\max\{c_0,c_2\}
    \end{equation*}
\end{proposition}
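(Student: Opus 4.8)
The plan is to derive the conclusion from the classical strong maximum principle for subharmonic functions together with the Hopf boundary point lemma, applied on the three boundary portions $\Gamma_0$, $\Gamma_1$, $\Gamma_2$ according to the data prescribed on each. First I would dispose of the trivial case in which $v$ is constant: then $v\equiv c_0$ by the condition on $\Gamma_0$, forcing $c_0=c_2$ and the identity $\max_{\overline\Omega}v=c_0=c_3$. So assume henceforth that $v$ is nonconstant.

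Since $\Delta v\ge 0$ in the connected open set $\Omega$ and $v$ is nonconstant, the strong maximum principle prevents $v$ from attaining its supremum at any interior point; hence $M:=\max_{\overline\Omega}v$ is attained only on $\partial\Omega=\overline{\Gamma}_0\cup\overline{\Gamma}_1\cup\overline{\Gamma}_2$, and moreover $v(x)<M$ for every $x\in\Omega$. On $\Gamma_0$ and $\Gamma_2$ the Dirichlet conditions give $v=c_0$ and $v=c_2$, so on these two portions $v\le c_3$. Because $v$ is continuous up to the smooth pieces, the edge values satisfy $v=c_0$ on $\partial\Gamma_0$ and $v=c_2$ on $\partial\Gamma_2$; as $\partial\Gamma_1=\partial\Gamma_0\cup\partial\Gamma_2$, this bounds $v\le c_3$ on the entire edge set $\partial\Gamma_1$ as well. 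In particular $M\ge c_3$ already follows from $v=c_0$ on $\Gamma_0$ and $v=c_2$ on $\Gamma_2$.

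To obtain the reverse inequality I would argue by contradiction, assuming $M>c_3$. By the previous paragraph $M$ cannot be attained on $\Gamma_0\cup\Gamma_2\cup\partial\Gamma_1$, so it must be attained at some point $x_0$ lying in the relative interior of the smooth hypersurface $\Gamma_1$, where an interior sphere condition holds. Since $v<M$ throughout $\Omega$ and $v\in C^1$ up to $\Gamma_1$, the Hopf lemma applies and yields $\partial v/\partial\nu(x_0)>0$, in direct contradiction with the hypothesis $\partial v/\partial\nu\le 0$ on $\Gamma_1$. Hence $M\le c_3$, and combined with $M\ge c_3$ this gives $M=c_3$.

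The delicate point, and the step I expect to require the most care, is the treatment of the non-smooth portions of $\partial\Omega$: the edges $\partial\Gamma_0\cup\partial\Gamma_2$ and, above all, the cone vertex $\mathcal{O}$, where $v$ is a priori known only in $W^{1,2}(\Omega)$ and Hopf's lemma is unavailable. The idea I would use is that these sets are lower-dimensional and of vanishing capacity in dimension $n\ge 2$, so that the supremum of a subharmonic $W^{1,2}$ function cannot concentrate there; the continuity of $v$ up to the smooth boundary pieces then forces the limiting values along $\partial\Gamma_1$ and at $\mathcal{O}$ to equal $c_0$ or $c_2$, keeping them below $c_3$. This is precisely where the $W^{1,2}(\Omega)$ hypothesis is needed, and it is the reason the argument must adapt the Pacella--Tralli framework rather than invoke a textbook maximum principle verbatim.
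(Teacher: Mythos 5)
Your argument is structurally different from the paper's: you run a pointwise argument (strong maximum principle in $\Omega$, Dirichlet data on $\Gamma_0\cup\Gamma_2$, Hopf's lemma at an interior point of $\Gamma_1$), whereas the paper never invokes Hopf or the strong maximum principle at all. Instead it proves $v\le c_3$ by an energy (Caccioppoli-type) argument: it tests $\Delta v\ge 0$ against $\xi^+=(v-c_3)^+$ on subdomains truncated away from the edges $\partial\Gamma_0\cup\partial\Gamma_2$, controls the truncation terms via the coarea formula, H\"older's inequality and the measure bound $|\{x:\,d(x,\partial\Gamma_l)<\varepsilon\}|\le C\varepsilon^2$ for the $(n-2)$-dimensional edges, and concludes $\int_\Omega|\nabla\xi^+|^2\,\mathrm{d}V=0$. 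The smooth-boundary part of your proof is fine as far as it goes, but it is not where the difficulty of this proposition lies.

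The genuine gap is exactly at the point you flag as ``delicate'' and then leave as an idea. First, you assume $M=\max_{\overline\Omega}v$ is attained, but $v$ is only hypothesized continuous on $\Omega\cup\Gamma_0\cup\Gamma_1\cup\Gamma_2$, not on $\overline\Omega$; near the edges and the vertex $\mathcal{O}$ it need not even be bounded, so the supremum may fail to be attained anywhere, and your trichotomy (max on $\Gamma_0\cup\Gamma_2$, on $\partial\Gamma_1$, or in the relative interior of $\Gamma_1$) does not exhaust the possibilities. Second, the repair you propose does not close this: the extended maximum principle allowing a polar (zero-capacity) exceptional set on the boundary requires $v$ to be \emph{bounded above} near that set --- the Poisson kernel of a ball, which is harmonic, vanishes on the whole boundary except one point, yet has supremum $+\infty$, shows this hypothesis cannot be dropped. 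The only available substitute for boundedness here is the $W^{1,2}(\Omega)$ hypothesis, and converting $W^{1,2}$ integrability into control of the boundary behavior near the $(n-2)$-dimensional edges is precisely the content of the paper's truncation argument; it is not a consequence of ``vanishing capacity'' alone. Likewise, your assertion that continuity of $v$ on the smooth pieces ``forces the limiting values along $\partial\Gamma_1$ and at $\mathcal{O}$ to equal $c_0$ or $c_2$'' is unjustified: the limits of $v$ from inside $\Omega$ at edge points need not exist, and need not agree with the data carried by the adjacent faces --- ruling out such Poisson-kernel-type concentration is the theorem's real technical burden. As it stands, your proposal proves the result only under the stronger, unstated assumption $v\in C(\overline\Omega)$ (or $v$ bounded above), and would need the paper's energy argument, or a carefully executed $H^1$ weak maximum principle with mixed boundary conditions, to cover the stated hypotheses.
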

\begin{proof}
    Consider the auxiliary function $\xi=v-c_3$. We will prove that $\xi^+\equiv 0$ and consequently $v \leq c_3$.
We have a lack of regularity for the vector field $\nabla v$ at the non-regular part of $\partial \Omega$, that is, at the  $\overline{\Gamma}_1$ and $\overline{\Gamma}_2$ intersect, and   $\overline{\Gamma}_1$ and $\overline{\Gamma}_0$ intersect .Let us write $\Omega=\Omega_1 \cup \Omega_2$ where $\Gamma_2 \subset \partial\Omega_1$ and 
$\overline{\Gamma}_0 \subset \partial\Omega_2$. For $\varepsilon_1 , \, \varepsilon_2 >0$ sufficiently small, we define, 
\begin{equation*}
\begin{aligned}
\Omega_{1,\varepsilon_1}=\{ x\in \Omega_1 : d(x, \partial \Gamma_2)>\varepsilon_1\} \\
\Omega_{2,\varepsilon_2}=\{ x\in \Omega_2 : d(x, \partial \Gamma_0)>\varepsilon_2\}
\end{aligned}
\end{equation*}
Using the divergence theorem, we obtain 
\begin{align*}
     0 &\leq  \int_{\Omega_{1,\varepsilon_1}} \xi^+\Delta v \, \mathrm{d} V + \int_{\Omega_{2,\varepsilon_2}} \xi^+\Delta v \, \mathrm{d}V\\
    &=  \int_{\Omega_{1,\varepsilon_1}} \operatorname{div}\left( \xi^+\nabla v\right)\, \mathrm{d}V-\int_{\Omega_{1,\varepsilon_1}} \left\langle \nabla \xi^+,\nabla v \right\rangle \, \mathrm{d}V
    + \int_{\Omega_{2,\varepsilon_2}}  \operatorname{div}\left( \xi^+\nabla v \right)\, \mathrm{d}V-\int_{\Omega_{2,\varepsilon_2}}  \left\langle \nabla \xi^+,\nabla v \right\rangle \, \mathrm{d}V \\
    & = \int_{\partial \Omega_{1,\varepsilon_1}} \xi^+\dfrac{\partial v}{\partial\nu}\, \mathrm{d}\sigma-\int_{\Omega_{1,\varepsilon_1}\cap\{h^+>0\}} |\nabla \xi^+|^2\, \mathrm{d}V+
    \int_{\partial \Omega_{2,\varepsilon_2}} h^+\dfrac{\partial v}{\partial\nu}\, \mathrm{d}\sigma-\int_{\Omega_{2,\varepsilon_2}\cap\{\xi^+>0\}} |\nabla \xi^+|^2\, \mathrm{d}V
\end{align*}
Setting
\begin{align*}
    U_{1,\varepsilon_1}=\; &\{ x : d(x,\partial\Gamma_2)=\varepsilon_1 \},
    \quad & U_{2,\varepsilon_2}=\; & \{ x : d(x,\partial\Gamma_2)=\varepsilon_2 \},\\
     G_{\varepsilon_1}=\; & \partial \Omega_{1,\varepsilon_1} \setminus U_{1,\varepsilon_1}, \quad & G_{\varepsilon_2}=\;& \partial \Omega_{2,\varepsilon_2} \setminus U_{2,\varepsilon_2}
\end{align*}
we can write
\begin{equation*}
    \partial \Omega_{1,\varepsilon_1} = U_{1,\varepsilon_1} \cup G_{\varepsilon_1}, \quad 
    \partial \Omega_{2,\varepsilon_2} = U_{2,\varepsilon_2} \cup G_{\varepsilon_2}, 
\end{equation*}
and consequently
\begin{equation*}
    \int_{\partial \Omega_{1,\varepsilon_1}} \xi^+\dfrac{\partial v}{\partial\nu} \, \mathrm{d} \sigma = 
    \int_{\Omega\cap U_{1,\varepsilon_1}} \xi^+\dfrac{\partial v}{\partial\nu} \, \mathrm{d}\sigma +
    \int_{G_{\varepsilon_1}} \xi^+\dfrac{\partial v}{\partial\nu} \, \mathrm{d}\sigma 
\end{equation*}
and 
\begin{equation*}
    \int_{\partial \Omega_{2,\varepsilon_2}} \xi^+\dfrac{\partial v}{\partial\nu} \, \mathrm{d} \sigma = 
    \int_{\Omega\cap U_{2,\varepsilon_2}} \xi^+\dfrac{\partial v}{\partial\nu} \, \mathrm{d} \sigma +
    \int_{G_{\varepsilon_2}} \xi^+\dfrac{\partial v}{\partial\nu} \, \mathrm{d}\sigma 
\end{equation*}
Thus,
\begin{equation}\label{Zir}
    0 \leq  
    \int_{\Omega\cap U_{1,\varepsilon_1}} \xi^+\dfrac{\partial v }{\partial\nu}  \, \mathrm{d} \sigma +
    \int_{\Omega\cap U_{2,\varepsilon_2}} \xi^+\dfrac{\partial v}{\partial\nu} \, \mathrm{d} \sigma +
    \int_{G_{\varepsilon_1}\cup G_{\varepsilon_2}} \xi^+\dfrac{\partial v}{\partial\nu} \, \mathrm{d} \sigma 
    -\int_{(\Omega_{1,\varepsilon_1}\cup \Omega_{2,\varepsilon_2})\cap\{\xi^+>0\}} |\nabla \xi^+|^2  \, \mathrm{d} V
\end{equation}
Note that
\begin{equation*}
    \lim_{\varepsilon_1, \varepsilon_2 \to 0} G_{\varepsilon_1}\cup G_{\varepsilon_2} = \Gamma_0 \cup \Gamma_1 \cup \Gamma_2.
\end{equation*}
\bigskip

\begin{claim} The following limits hold:
    \begin{equation*}
        \begin{aligned}
& \lim_{\varepsilon_i \to 0}\int_{\Omega\cap U_{i,\varepsilon_i}} \xi^+\dfrac{\partial v }{\partial\nu}  \, \mathrm{d} \sigma = 0, \quad i=1,2,    \\
    & \lim_{\varepsilon_1, \varepsilon_2 \to 0} 
    \int_{G_{\varepsilon_1}\cup G_{\varepsilon_2}} \xi^+\dfrac{\partial v}{\partial\nu} \, \mathrm{d} \sigma =
    \int_{\Gamma_0\cup \Gamma_1 \cup \Gamma_2} \xi^+\dfrac{\partial v}{\partial\nu} \, \mathrm{d} \sigma \\
  & \lim_{\varepsilon_1, \varepsilon_2 \to 0}  \int_{(\Omega_{1,\varepsilon_1}\cup \Omega_{2,\varepsilon_2})\cap\{\xi^+>0\}} |\nabla \xi^+|^2  \, \mathrm{d} V =
   \int_{\Omega} |\nabla \xi^+|^2  \, \mathrm{d} V
        \end{aligned}
    \end{equation*}
\end{claim}
\noindent Indeed, note that $f=\xi^+ |\nabla \xi | \in L^2(\Omega)$. Thus, the functions $g_i:(0,1) \to (0,\infty)$ defined by
\begin{equation*}
    g_i(\rho)= \int_{U_{i,\rho}} f \, \mathrm{d} \sigma, \quad i=1,2,
\end{equation*}
belongs to $L^1(0,1)$, in view of the coarea formula. 
Moreover, for all $\delta_j = 1/j$, with $j \in \mathds{N}$, there exists $\varepsilon^j_1, \, \varepsilon^j_2 \in (0, \delta_j)$, which we can assume to be monotone decreasing, such that 
\begin{equation*}
    0 \leq \int_{\Omega\cap U_i,\varepsilon^j_i} f \,  \mathrm{d} \sigma
    \leq \dfrac{1}{\delta_j}\int _0^{\delta_j}\int_{\Omega\cap  U_i,\rho} f \,  \mathrm{d} \sigma \mathrm{d} \rho = \dfrac{1}{\delta_j}\int_{\Omega\cap  \left\{ x : \mathrm{d}(x, \partial \Gamma_l) <\delta_j \right\}} f  \mathrm{d} V.
\end{equation*}
Using H\"{o}lder inequality we have 
\begin{eqnarray*}
    \dfrac{1}{\delta_j} \int_{\Omega \cap  \left\{ x  : \mathrm{d}(x, \partial \Gamma_l) < \delta_j\right\}} f \,  \mathrm{d} V &\leq &  
    \left(\dfrac{|\left\{ x \in \Omega : \mathrm{d}(x, \partial \Gamma_l) < \delta_j \right\}|}{\delta_j^2}\right)^{1/2} \left( \int_{\Omega\cap  \left\{ x  : \mathrm{d}(x, \partial \Gamma_l) < \delta_j \right\}} f^2 \, \mathrm{d} V\right)^{1/2} \\
    &\leq & C \left( \int_{\Omega\cap  \left\{ x : \mathrm{d}(x, \partial \Gamma_l) < \delta_j \right\}} f^2 \, \mathrm{d} V\right)^{1/2},
\end{eqnarray*}
where we have used that
\begin{equation*}\label{bounded}
    |\left\{ x : \mathrm{d}(x, \partial \Gamma_l) < \varepsilon \right\}| \leq C \varepsilon^2 , \qquad l=0,2,
\end{equation*}
since $\partial\Gamma_0$ (respectively $\partial\Gamma_2$) is a smooth $n-2$-dimensional manifold and $\partial \Omega_2 \setminus \Gamma_0 $ (respectively $\partial\Omega_0\setminus\Gamma_2$) is smooth enough outside $\partial\Gamma_0$ (respectively $\partial\Gamma_2$), for details see  \cite{federerGMT}.

 Since $f \in L^2(\Omega)$, the right-hand side converges to $0$ as $\delta_j=\dfrac{1}{j}\to 0$, hence
\begin{equation*}
  \int_{\Omega\cap U_i,\varepsilon^j_i} f \,  \mathrm{d} \sigma \to 0,
\end{equation*}
which implies that 
\begin{equation*}
    \int_{{U_i,\varepsilon^j_i}} \xi^+ \frac{\partial \xi}{\partial \nu } \, \mathrm{d} \sigma  \to 0
\end{equation*}
because
\begin{equation*}
    \left|  \int_{{U_i,\varepsilon^j_i}} \xi^+ \frac{\partial \xi}{\partial \nu } \, \mathrm{d} \sigma  \right| \leq 
     \int_{{U_i,\varepsilon^j_i}}  \xi^+ |\langle \nabla \xi , \nu \rangle | \, \mathrm{d} \sigma \leq 
       \int_{{U_i,\varepsilon^j_i}}  \xi^+ | \nabla \xi  | \, \mathrm{d} \sigma =  \int_{{U_i,\varepsilon^j_i}} f \, \mathrm{d} \sigma.
\end{equation*}
Next, we consider the sequence of $L^1$-functions 
\begin{equation*}
    \phi_j  = \xi^+ \langle \nabla h , \nu \rangle  \chi_{G_{\varepsilon^j_1} \cup G_{\varepsilon^j_2}}.
\end{equation*}
Since
\begin{equation*}
    \phi_j \to \xi^+ \langle \nabla \xi , \nu \rangle  \chi_{\Gamma_0 \cup \Gamma_1 \cup \Gamma_2}.
\end{equation*}
almost everywhere in $\Omega$ and $\phi_j \leq h^+ |\nabla \xi|$ for all $j \in  \mathds{N}$ by using the Lebesgue theorem we have 
\begin{equation*}
    \int_{G_{\varepsilon^j_1} \cup G_{\varepsilon^j_2}} \xi^+ \frac{\partial \xi}{\partial \nu } \, \mathrm{d} \sigma   \to \int_{\Gamma_0\cup \Gamma_1 \cup \Gamma_2}  \xi^+ \frac{\partial \xi}{\partial \nu } \, \mathrm{d} \sigma  .
\end{equation*}
Since 
\begin{equation*}
   \int_{\Gamma_0 \cup \Gamma_1 \cup \Gamma_2 }  \xi^+ \frac{\partial \xi}{\partial \nu } \, \mathrm{d} \sigma =
     \int_{\Gamma_0 }  \xi^+ \frac{\partial \xi}{\partial \nu } \, \mathrm{d} \sigma +
       \int_{\Gamma_1 } \xi^+ \frac{\partial \xi}{\partial \nu } \, \mathrm{d} \sigma +
         \int_{ \Gamma_2 }  \xi^+ \frac{\partial \xi}{\partial \nu } \, \mathrm{d} \sigma   \leq 0.
\end{equation*}
Thus taking the limit in \eqref{Zir}, we obtain 
\begin{equation*}
 0 \leq   
 \int_{\Gamma_0 \cup \Gamma_1 \cup \Gamma_2 }  \xi^+ \frac{\partial \xi}{\partial \nu } \, \mathrm{d} \sigma 
 -  \int_{\Gamma_0 } |\nabla \xi^+|^2  \, \mathrm{d} V \leq 0,
\end{equation*}
which implies  that 
\begin{equation*}
    \int_{\Omega } |\nabla \xi^+|^2  \, \mathrm{d} V = 0.
\end{equation*}
Since $ \xi^+ =0 $ in $\Gamma_1 \cup \Gamma_2$, we conclude that $\xi^+ \equiv 0$ in $\Omega,$ and consequently $v \leq c_3$.
\end{proof}

\bigskip 

Now, let us provide a divergence theorem for annular sector-like domains as follows.

\begin{theorem} \label{DT}

Let $X$ be a vector field over $\Omega$ such that
\begin{equation*}
    |X|\in C^1(\Omega\cup \Gamma_0\cup \Gamma_1\cup \Gamma_2)\cap {L^2(\Omega) }\qquad and \qquad \operatorname{div}(X)\in L^1(\Omega).
\end{equation*}
 Then
\begin{equation*}
    \int_\Omega \operatorname{div}(X)\mathrm{d}V=\int_{\partial\Omega} \langle X,\nu\rangle\mathrm{d}\sigma=\int_{\Gamma_0} \langle X,\nu\rangle\mathrm{d}\sigma+\int_{\Gamma_1} \langle X,\nu\rangle\mathrm{d}\sigma+\int_{\Gamma_2} \langle X,\nu\rangle\mathrm{d}\sigma.
\end{equation*}
\end{theorem}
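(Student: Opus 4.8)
The plan is to reduce the statement to the classical Gauss--Green formula by exhausting $\Omega$ with a family of subdomains whose boundaries are regular enough, and then to pass to the limit. The only obstruction to applying the divergence theorem directly is the presence of the singular edges $\partial\Gamma_0$ and $\partial\Gamma_2$, where $\Gamma_1$ meets $\Gamma_0$ and where $\Gamma_1$ meets $\Gamma_2$; away from these edges the regularity of $X$ up to the smooth boundary pieces together with $\operatorname{div}(X)\in L^1(\Omega)$ is exactly what is needed. The argument is a close analogue of the one already carried out in the proof of Proposition~\ref{MP}, and I would reuse its machinery throughout.

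First I would borrow the decomposition of Proposition~\ref{MP}: write $\Omega=\Omega_1\cup\Omega_2$ with $\Gamma_2\subset\partial\Omega_1$ and $\overline{\Gamma}_0\subset\partial\Omega_2$, and for small $\varepsilon_1,\varepsilon_2>0$ excise tubular neighbourhoods of the singular edges by setting
\[
\Omega_{1,\varepsilon_1}=\{x\in\Omega_1:\ d(x,\partial\Gamma_2)>\varepsilon_1\},\qquad \Omega_{2,\varepsilon_2}=\{x\in\Omega_2:\ d(x,\partial\Gamma_0)>\varepsilon_2\}.
\]
For $\varepsilon_i$ small, these domains have Lipschitz boundary and $X$ is regular up to $\partial\Omega_{i,\varepsilon_i}$, so the classical divergence theorem gives $\int_{\Omega_{i,\varepsilon_i}}\operatorname{div}(X)\,\mathrm{d}V=\int_{\partial\Omega_{i,\varepsilon_i}}\langle X,\nu\rangle\,\mathrm{d}\sigma$. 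Writing $\partial\Omega_{i,\varepsilon_i}=U_{i,\varepsilon_i}\cup G_{\varepsilon_i}$ as in Proposition~\ref{MP}, where $U_{i,\varepsilon_i}=\{d(\cdot,\partial\Gamma_l)=\varepsilon_i\}$ is the excision surface and $G_{\varepsilon_i}\to\Gamma_0\cup\Gamma_1\cup\Gamma_2$ as $\varepsilon_i\to 0$, each identity splits into a ``good'' contribution over $G_{\varepsilon_i}$ and an ``error'' contribution over $U_{i,\varepsilon_i}$.

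The heart of the matter, and the step I expect to be the main obstacle, is showing that the error terms vanish, i.e. $\lim_{\varepsilon_i\to 0}\int_{U_{i,\varepsilon_i}}\langle X,\nu\rangle\,\mathrm{d}\sigma=0$ for $i=1,2$. Here I would argue exactly as in Proposition~\ref{MP}: since $|\langle X,\nu\rangle|\le |X|$ and $|X|\in L^2(\Omega)$, the coarea formula shows that $\rho\mapsto\int_{U_{i,\rho}}|X|\,\mathrm{d}\sigma$ is integrable on $(0,1)$, so one may select a monotone sequence $\varepsilon_i^j\downarrow 0$ with $\delta_j=1/j$ along which, using H\"older together with the codimension-two estimate $|\{x:\ d(x,\partial\Gamma_l)<\varepsilon\}|\le C\varepsilon^2$ (valid because $\partial\Gamma_l$ is a smooth $(n-2)$-dimensional manifold),
\[
\int_{U_{i,\varepsilon_i^j}}|X|\,\mathrm{d}\sigma\ \le\ \frac{1}{\delta_j}\int_{\{d(\cdot,\partial\Gamma_l)<\delta_j\}}|X|\,\mathrm{d}V\ \le\ C\Bigl(\int_{\{d(\cdot,\partial\Gamma_l)<\delta_j\}}|X|^2\,\mathrm{d}V\Bigr)^{1/2}\longrightarrow 0.
\]
This is precisely where the hypothesis $|X|\in L^2(\Omega)$ is essential, the codimension two of the singular set making the averaged tube integral integrable.

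Finally, on the remaining pieces I would pass to the limit by dominated convergence. Transporting $G_{\varepsilon_i^j}$ to the fixed boundary $\Gamma_0\cup\Gamma_1\cup\Gamma_2$ via the normal projection, the integrands converge a.e. to $\langle X,\nu\rangle\,\chi_{\Gamma_0\cup\Gamma_1\cup\Gamma_2}$ and are dominated by $|X|$ restricted to a fixed collar of $\partial\Omega$, which is integrable off the corners by the same coarea considerations; hence $\int_{G_{\varepsilon_i}}\langle X,\nu\rangle\,\mathrm{d}\sigma\to\int_{(\Gamma_0\cup\Gamma_1\cup\Gamma_2)\cap\partial\Omega_i}\langle X,\nu\rangle\,\mathrm{d}\sigma$, while $\int_{\Omega_{i,\varepsilon_i}}\operatorname{div}(X)\,\mathrm{d}V\to\int_{\Omega_i}\operatorname{div}(X)\,\mathrm{d}V$ since $\operatorname{div}(X)\in L^1(\Omega)$ and $\Omega_{i,\varepsilon_i}\nearrow\Omega_i$. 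Adding the two resulting identities for $i=1,2$ and observing that the interior interface separating $\Omega_1$ from $\Omega_2$ is traversed with opposite unit normals and therefore cancels, I obtain
\[
\int_\Omega \operatorname{div}(X)\,\mathrm{d}V=\int_{\Gamma_0}\langle X,\nu\rangle\,\mathrm{d}\sigma+\int_{\Gamma_1}\langle X,\nu\rangle\,\mathrm{d}\sigma+\int_{\Gamma_2}\langle X,\nu\rangle\,\mathrm{d}\sigma,
\]
which is the desired conclusion.
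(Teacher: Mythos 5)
Your proposal is correct and follows essentially the same route as the paper: the paper's proof of Theorem~\ref{DT} simply reuses the decomposition $\Omega_{1,\varepsilon_1}, \Omega_{2,\varepsilon_2}$, the splitting $\partial\Omega_{i,\varepsilon_i}=U_{i,\varepsilon_i}\cup G_{\varepsilon_i}$, and the coarea/H\"older/codimension-two argument from Proposition~\ref{MP}, with $f=|X|$ and $\phi_j=\langle X,\nu\rangle\chi_{G_{\varepsilon_1^j}\cup G_{\varepsilon_2^j}}$, exactly as you do. Your write-up in fact supplies details (notably the cancellation along the interior interface between $\Omega_1$ and $\Omega_2$) that the paper leaves implicit in its one-line reference to the previous proof.
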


\begin{proof} Using the same notation as in Proposition \ref{MP}, we see that
\begin{eqnarray*}
    \int_{\Omega_{1,\varepsilon_1}} \operatorname{div}(X) \, \mathrm{d} V + \int_{\Omega_{2,\varepsilon_2}} \operatorname{div}(X) \, \mathrm{d}V
      &=&\int_{\partial \Omega_{1,\varepsilon_1}} \langle X,\nu\rangle\, \mathrm{d}\sigma+
    \int_{\partial \Omega_{2,\varepsilon_2}} \langle X,\nu\rangle\, \mathrm{d}\sigma\\
    &=& \int_{\Omega\cap U_{1,\varepsilon_1}} \langle X,\nu\rangle \, \mathrm{d} \sigma +
    \int_{\Omega \cap U_{2,\varepsilon_2}} \langle X,\nu\rangle \, \mathrm{d} \sigma +
    \int_{G_{\varepsilon_1}\cup G_{\varepsilon_2}} \dfrac{\partial v}{\partial\nu} \, \mathrm{d} \sigma.
\end{eqnarray*}
By setting $f=|X|$ and $\phi_j=\langle X,\nu\rangle \chi_{G_{\varepsilon^j_1} \cup G_{\varepsilon^j_2}}$, it follows from an argument similar to the previous proof that 
\begin{equation*}
    \int_\Omega \operatorname{div}(X)\mathrm{d}V=\int_{\partial\Omega} \langle X,\nu\rangle\mathrm{d}\sigma=\int_{\Gamma_0} \langle X,\nu\rangle\mathrm{d}\sigma+\int_{\Gamma_1} \langle X,\nu\rangle\mathrm{d}\sigma+\int_{\Gamma_2} \langle X,\nu\rangle\mathrm{d}\sigma.
\end{equation*}.
\end{proof}

    \section{Some rigidity results in space forms}\label{P-funcao} 
This section is devoted to the study of rigidity phenomena for Serrin-type problems in annular sector-like domains contained within space forms. For the reader’s convenience, we begin by presenting an Obata-type rigidity result for such domains in a space form.
\begin{lemma}\label{Obata}
      Let $\Omega$ be an annular sector-like domain contained in a space form with sectional curvature $k$. Assume that there exists a function
   $u: \overline{\Omega} \to \mathbb{R}$ such that $u \in C^2(\Omega)\cap C^1(\overline{\Omega})$  and $u$ is a solution to
\begin{equation}\label{sanalo}
    \begin{cases}
        \nabla^2 u = -(1 + ku)g & \text{in } \Omega, \\
        u = a_0 & \text{on } \Gamma_0  \\
        u = a_2 & \text{on } \Gamma_2,
    \end{cases}
\end{equation}
where $ a_0, a_2 \in \mathbb{R}$. Then $\Gamma_0=\partial B_{R_1}(\mathcal{O})\cap \Sigma$ and $\Gamma_2=\partial B_R(\mathcal{O})\cap \Sigma$, that is, \( \Omega = \{x\in \Sigma: R< d(x,\mathcal{O})< R_1\} \) and $u$ is a radial function, that is, $u$ depends only on the distance from the center of the ball $B_R(\mathcal{O})$. Moreover, we can describe explicitly the function $u$ for each case as follows:
\begin{flushleft}
$ \mathrm{(i)}$ For the Euclidean space under the overdetermined boundary condition described in Theorem~\ref{Euclidean space}, we have 
\begin{equation*}
    u(x) = \frac{R_1^2}{2} - \frac{r^2}{2}, \quad \text{where} \quad  r=\operatorname{dist}(x,\mathcal{O}).
\end{equation*} 
$ \mathrm{(ii)}$ For the elliptic space under the overdetermined boundary condition described in Theorem~\ref{cajarana} or Theorem~\ref{general case}, we have
\begin{equation*}
    u(x) = \frac{\cos r -\cos R_1}{\cos R_1}, \quad \text{where} \quad  r=\operatorname{dist}(x,\mathcal{O}).
\end{equation*}  
$ \mathrm{(iii)}$ For the hyperbolic space under the overdetermined boundary condition described in Theorem~\ref{jenipapo} or Theorem~\ref{jenipapo2}, we have
\begin{equation*}
    u(x) = \frac{\cosh R_1 -\cosh r}{\cosh R_1}, \quad \text{where} \quad  r=\operatorname{dist}(x,\mathcal{O}).
\end{equation*}    
\end{flushleft}

\end{lemma}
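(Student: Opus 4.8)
The plan is to run an Obata–type argument: from the Hessian identity $\nabla^2 u = -(1+ku)g$ I will show that the regular level sets of $u$ are totally umbilical hypersurfaces of constant mean curvature, identify them with concentric geodesic spheres centered at $\mathcal{O}$, and then reduce the whole problem to a one–dimensional ODE along the radial geodesics. The first step is to verify that the $P$–function $P := |\nabla u|^2 + 2u + ku^2$ is constant on the connected domain $\Omega$. Indeed, for any tangent field $X$,
\[
\nabla_X\!\left(|\nabla u|^2\right) = 2\,\nabla^2 u(\nabla u, X) = -2(1+ku)\langle\nabla u, X\rangle = -\nabla_X\!\left(2u + ku^2\right),
\]
so $\nabla P \equiv 0$. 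In particular $|\nabla u|^2 = P - 2u - ku^2$ is a function of $u$ alone, hence $|\nabla u|$ is constant on each level set $\{u = c\}$.

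Next, on the open set where $\nabla u \neq 0$ — which I will argue is all of $\Omega$, since the annular region does not reach the point where $\nabla u$ vanishes — set $N = \nabla u/|\nabla u|$. Two facts follow directly from the Hessian equation. First, the integral curves of $N$ are unit–speed geodesics: since $\nabla_N \nabla u = -(1+ku)N$ is parallel to $N$ and $N$ is unit, one gets $\langle\nabla_N N, N\rangle = 0$ and the tangential part vanishes as well, so $\nabla_N N = 0$. Second, each regular level set is totally umbilical: for $Y, Z$ tangent to $\{u=c\}$,
\[
\mathrm{II}(Y,Z) = \langle \nabla_Y N, Z\rangle = \frac{\nabla^2 u(Y,Z)}{|\nabla u|} = -\frac{1+ku}{|\nabla u|}\,\langle Y,Z\rangle,
\]
with umbilicity factor constant on the level set (as $u$ and $|\nabla u|$ are). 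A totally umbilical hypersurface of constant mean curvature in a space form is a piece of a geodesic sphere, and the geodesics from the first fact, being the common normals, meet at a single point $p$; thus the level sets of $u$ form a family of concentric geodesic spheres about $p$.

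To identify $p$ with the vertex $\mathcal{O}$ I would use the inner cap. In the setting where this lemma is applied one has $\Omega_2 = B_R(\mathcal{O})$, so $\Gamma_2 = \partial B_R(\mathcal{O}) \cap \Sigma$ is an open portion of the geodesic sphere of radius $R$ centered at $\mathcal{O}$; but $\Gamma_2 = \{u = a_2\}$ is simultaneously an open portion of a geodesic sphere centered at $p$, and since an open piece of a geodesic sphere determines its center uniquely, I conclude $p = \mathcal{O}$. Consequently every level set of $u$ is a geodesic sphere centered at $\mathcal{O}$; in particular $\Gamma_0 = \{u = a_0\}$ equals $\partial B_{R_1}(\mathcal{O}) \cap \Sigma$ for the radius $R_1$ carrying the value $a_0$, so that $\Omega = \{x \in \Sigma : R < \operatorname{dist}(x,\mathcal{O}) < R_1\}$. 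Finally, writing $u = u(r)$ with $r = \operatorname{dist}(\cdot,\mathcal{O})$, the Hessian equation forces both the radial and tangential identities $u'' = -(1+ku)$ and $\tfrac{\psi'}{\psi}\,u' = -(1+ku)$. Solving $u'' + ku = -1$ and discarding the $\psi(r)$–component (which is incompatible with the tangential identity) yields $u = A\,\psi'(r) + u_p$, where $u_p = -\tfrac1k$ for $k = \pm 1$ and $u_p = -\tfrac{r^2}{2}$ for $k = 0$; imposing $u = a_0 = 0$ on $\Gamma_0$ (where $r = R_1$) fixes $A$ and reproduces exactly the three explicit formulas in (i)–(iii).

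The main obstacle is the geometric step that turns the umbilical level sets into genuine geodesic spheres centered precisely at $\mathcal{O}$: one must exclude the degenerate totally umbilical models in hyperbolic space (horospheres and equidistant hypersurfaces) and then pin the common center at the cone vertex. I expect to rule out the degenerate cases using the sign and constancy of the umbilicity factor $-(1+ku)/|\nabla u|$ together with the boundedness of $\Omega$, and to fix the center via the uniqueness of the center of the inner spherical cap $\Gamma_2$ as above. Some additional care is needed to exclude interior critical points of $u$, so that the level–set foliation and the field $N$ are defined throughout $\Omega$; this I would obtain from the constancy of $P$ and the monotonicity of $u$ in $r$ on the annular region.
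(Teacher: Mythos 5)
Your core computations are all correct: the constancy of $P=|\nabla u|^2+2u+ku^2$ follows exactly as you say from \eqref{sanalo}, the integral curves of $N=\nabla u/|\nabla u|$ are geodesics, the regular level sets are umbilical with factor $-(1+ku)/|\nabla u|$, and the final ODE reduction (the tangential identity $\frac{\psi'}{\psi}u'=-(1+ku)$ killing the $\psi$-component) reproduces the three formulas. But your route is the full Obata level-set foliation, whereas the paper's proof is leaner: it uses umbilicity only for the two boundary components $\Gamma_0$ and $\Gamma_2$ (where $u$ is constant, so $\nu=\nabla u/|\nabla u|$ and $A=-\frac{1+ka_i}{|\nabla u|}g$), identifies these with geodesic spheres centered at $\mathcal{O}$, reads off constant Neumann data $|\nabla u|\equiv c_2$ on $\Gamma_2$ from the umbilicity identity, and then gets radiality from uniqueness for the initial value problem \eqref{aire}: for two points $q_1,q_2$ at the same distance $\rho_2\in(R,R_1)$ from $\mathcal{O}$, the restrictions $f_i(s)=u(\gamma_i(s))$ of $u$ to the radial geodesics solve $f''+kf=-1$ with identical Cauchy data at $s=R$, hence coincide. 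That argument never inspects interior level sets and never needs $\nabla u\neq 0$ inside $\Omega$.

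This matters because the two obstacles you flag at the end are genuine gaps, and your proposed fixes do not close them. First, your exclusion of interior critical points is circular: you appeal to ``the monotonicity of $u$ in $r$,'' but $u$ being a function of $r$ is precisely what is being proved; constancy of $P$ by itself does not rule out a level $\{u=c\}$ with $P=2c+kc^2$, on which $\nabla u$ would vanish identically and the foliation by $N$ would break down. Second, the assertion that the normal geodesics of a totally umbilical constant-mean-curvature hypersurface ``meet at a single point'' is false in hyperbolic space: horospheres and equidistant hypersurfaces are umbilical with constant factor and their normal geodesics never focus. You acknowledge this and promise to exclude it using the sign of the umbilicity factor and boundedness of $\Omega$, but that exclusion is exactly the missing content, and for the \emph{interior} level sets (unlike for $\Gamma_2$, whose center you can pin down from $\Omega_2=B_R(\mathcal{O})$) you have no independent way to identify a center. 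Both gaps disappear if you graft the paper's mechanism onto your setup: your constancy of $P$ already yields constant Neumann data $|\nabla u|\equiv c_2$ on the known sphere $\Gamma_2=\partial B_R(\mathcal{O})\cap\Sigma$; then restrict $u$ to the radial geodesics issuing from $\mathcal{O}$ (which, once $\Gamma_0$ is identified as $\partial B_{R_1}(\mathcal{O})\cap\Sigma$ via its umbilicity, sweep out the whole annular sector inside the cone) and conclude by uniqueness of the second-order IVP, with no reference to interior level sets or critical points.
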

\begin{proof}
    Since $u$ is constant on $\Gamma_0$, we have $\nu=\nabla u/|\nabla u|$ on $\Gamma_0$. Then, for all $X,Y\in \mathfrak X(\Gamma_0)$,
    \begin{align*}
        A(X,Y)=\langle \nabla_X\nu,Y\rangle = \left\langle \nabla_ X \dfrac{\nabla u}{|\nabla u|},Y\right\rangle  =\dfrac{1}{|\nabla u|} \langle \nabla_X\nabla u,Y\rangle.
    \end{align*}
That is, 
    \begin{equation}\label{fijes}
        A(X,Y)=-\dfrac{1+ka_0}{|\nabla u|}\langle X,Y\rangle
    \end{equation}
    Thus, $\Gamma_0$ is totally umbilical in $M$. Analogously, $\Gamma_2$ is totally umbilical in $M$. Therefore, since $M$ is  a space form we have $\Gamma_0=\partial B_{R_1}(\mathcal{O})\cap \Sigma$ and $\Gamma_2=\partial B_R(\mathcal{O})\cap \Sigma$ with $R<R_1$. Consequently, by \eqref{fijes} we have $|\nabla u|=c_0$ (constant) and $\nabla u=c_0\nu$ on  $\Gamma_0$. Analogously, $|\nabla u|=c_2$ (constant) and $\nabla u=-c_2\nu$ on  $\Gamma_2$.

    Now, choose $ \rho_2 \in (R, R_1),\,  q_1, \, q_2 \in \Sigma\cap \partial B_g(\mathcal{O}, \rho_2)\setminus\overline{\Gamma}_1$.
Since $M$ is a space form, for each $i = 1, 2$, there exists a unit minimizing geodesic $\gamma_i: [0, \rho_2] \to \Sigma\cap\overline{B_g(\mathcal{O}, \rho_2)}$ such that
    \begin{equation*}
        \gamma_i(0) = \mathcal{O} \quad \text{and} \quad \gamma_i(\rho_2) = q_i.
    \end{equation*}
\begin{center}
        \begin{tikzpicture}[scale=1.3]

\draw [color=black,line width=0.6pt, dash pattern=on 1pt off 1pt,thick,domain=56:123.5] plot ({2*cos(\x)}, {2*sin(\x)});

\draw[line width=0.7pt,color=black] (2,3)--(0,0)-- (-2,3);

             \draw [color=black,line width=0.6pt,fill=black,fill opacity=0.25,thick,domain=56:123.5] plot ({2.46*cos(\x)}, {2.46*sin(\x)})--(0,0);

            \draw [color=black,fill=white,fill opacity=1,thick,domain=56:123.5] plot ({cos(\x)}, {sin(\x)})--(0,0)--(0.553,0.83);

            \draw [fill=black] (0,0) circle (0.7pt);
            \draw [fill=black] (-0.8,1.83) circle (0.7pt);
            \draw [fill=black] (0.8,1.83) circle (0.7pt);

            \draw[line width=0.5pt,color=black,->] (0,1) .. controls (0.4,0.5) and (0.7,0.2) ..  (1.1,0.1);

            \draw[line width=0.5pt,color=black,->] (-0.3,1.98) .. controls (-0.7,2.05) and (-0.9,2.1) ..  (-1.5,1.9);
           
            \draw[line width=0.5pt,color=black,->] (1.2,2.14) .. controls (1.4,2.5) and (1.6,2.7) ..  (2,2.5);

            \draw[line width=0.5pt,color=black] (0,0) .. controls (0.3,0.6) and (0.5,1) ..  (0.8,1.83);
          
            \draw[line width=0.5pt,color=black] (0,0) .. controls (-0.3,0.6) and (-0.5,1) ..  (-0.8,1.83);

            \draw  (0,-0.15) node {\tiny \textcolor{black}{$\mathcal{O}$}};

            \draw  (-0.4,1.15) node {\tiny \textcolor{black}{$\gamma_1$}};
            
            \draw  (0.4,1.15) node {\tiny \textcolor{black}{$\gamma_2$}};

            \draw  (1.1,1.3) node {\tiny \textcolor{black}{$\Gamma_1$}};

            \draw  (-0.91,1.9) node {\tiny \textcolor{black}{$q_1$}};
            \draw  (0.91,1.9) node {\tiny \textcolor{black}{$q_2$}};

            \draw  (0,1.5) node {\tiny \textcolor{black}{$\Omega$}};

            \draw  (2,0) node {\tiny \textcolor{black}{$\Gamma_2=\Sigma\cap\partial B_R(\mathcal{O})$}};

            \draw  (2.9,2.4) node {\tiny \textcolor{black}{$\Gamma_0=\Sigma\cap\partial B_{R_1}(\mathcal{O})$}};
            
            \draw  (-2.2,1.8) node {\tiny \textcolor{black}{$\Sigma\cap\partial B_g(\mathcal{O},\rho_2)$}};
        \end{tikzpicture}
    \end{center}
    For each $i = 1, 2$, define $f_i(s) = u(\gamma_i(s))$ on $[R, \rho_2]$. Then,
    \begin{equation*}
        f_i'(s) = \langle \nabla u, \gamma'(s) \rangle, \quad \forall s \in (R, \rho_2).
    \end{equation*}
Since $(\Sigma\cap \overline{B_g(\mathcal{O}, \rho_2)}) \setminus (\overline{B_R(\mathcal{O})}\cup \Gamma_1) \subset \Omega$ and $u$ satisfies \eqref{sanalo}, we have for all  $s \in (R, \rho_2),$
    \begin{eqnarray*}
        f_i''(s) &=& \langle \nabla_{\gamma_i'(s)} \nabla u, \gamma_i'(s) \rangle + \langle \nabla u, \nabla_{\gamma_i'(s)} \gamma_i'(s) \rangle \\
        &=& \nabla^2 u(\gamma_i'(s), \gamma_i'(s)) \\
        &=& -(1 + k u(\gamma_i(s))) \cdot |\gamma_i'(s)|^2 \\
        &=& -1 - k f_i(s).
    \end{eqnarray*}
    By the boundary condition on $\Gamma_2$,
    \begin{equation*}
        f_i(R) = a_2 \quad \text{and} \quad f_i'(R) = -c_2.
    \end{equation*}
Thus, $f_i(s)$ satisfies  the following initial value problem (IVP):
    \begin{equation}\label{aire}
        \begin{cases}
            v''(s) + k v(s) = -1 & \text{in } (R, \rho_2), \\
            v'(R) = -c_2, \\
            v(R) = a_2.
        \end{cases}
    \end{equation}
Using the regularity and uniqueness properties of Problem \eqref{aire}, we conclude
\[
f_1 \equiv f_2 \quad \text{in} \quad [R, \rho_2].
\]
Therefore, \(u\) has the same expression along \(\gamma_1|_{[R,\rho_2]}\) and \(\gamma_2|_{[R,\rho_2]}\); consequently, \(u\) is constant on 
$
\Sigma \cap \partial B_g(\mathcal{O},\rho_2) \setminus \Gamma_1, 
$
that is, for each \(\rho_2 \in (R,R_1)\) we have $u(x)=u(y)$ if $x,y \in \Sigma \setminus \Gamma_1$ and $\operatorname{d}(\mathcal{O},x)=\operatorname{d}(\mathcal{O},y)=\rho_2$. 
By continuity, the same holds when including the boundary $\Gamma_1$, precisely on \(\Sigma \cap \partial B_g(\mathcal{O},\rho_2)\) and by the same argument, we also have the result on \(\Sigma \cap \partial B_{R_1}(\mathcal{O})\) and \(\Sigma \cap \partial B_{R}(\mathcal{O})\). 

To complete the proof of Lemma~\ref{Obata}, we just use the uniqueness of solutions for IVP \eqref{aire}
\end{proof}

\begin{proposition}
    Let \( N \) be a hypersurface of \( M \). If \( f \in C^2(M) \), then
\[
\Delta f = \Delta^N f + \langle \nabla f, \nu \rangle H + \nabla^2 f(\nu, \nu),
\]
where \( \Delta^N \) is the Laplacian of \( N \) and \( \nu \) is the outward unit normal vector.
\end{proposition}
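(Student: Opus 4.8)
The plan is to verify the identity pointwise by working in an orthonormal frame adapted to the hypersurface $N$, decomposing the ambient Hessian into a tangential piece---which I will identify with the intrinsic Hessian of $N$---plus a normal correction controlled by the second fundamental form. Since the statement is pointwise and tensorial, it suffices to fix $p\in N$ and choose a local orthonormal frame $\{e_1,\dots,e_{n-1},e_n=\nu\}$ for $M$ along $N$, where $n=\dim M$ and $e_1,\dots,e_{n-1}$ are tangent to $N$. Writing the ambient Laplacian as the full trace of the Hessian, I would immediately split off the normal direction:
\[
\Delta f \;=\; \sum_{i=1}^{n}\nabla^2 f(e_i,e_i)\;=\;\sum_{i=1}^{n-1}\nabla^2 f(e_i,e_i)\;+\;\nabla^2 f(\nu,\nu).
\]

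The heart of the argument is to rewrite the tangential sum. For tangent vector fields $X,Y$ on $N$ I would use the orthogonal decomposition of the ambient Levi-Civita connection, $\nabla_X Y=\nabla^N_X Y+\langle\nabla_X Y,\nu\rangle\nu$, where $\nabla^N$ denotes the induced connection on $N$. Differentiating the identity $\langle Y,\nu\rangle\equiv 0$ along $X$ gives $\langle\nabla_X Y,\nu\rangle=-\langle Y,\nabla_X\nu\rangle=-A(X,Y)$, with $A$ the second fundamental form from the Definition above. Substituting into the definition of the ambient Hessian, $\nabla^2 f(X,Y)=X(Yf)-(\nabla_X Y)f$, and recalling that the intrinsic Hessian is $(\nabla^N)^2 f(X,Y)=X(Yf)-(\nabla^N_X Y)f$, I obtain the pointwise relation
\[
\nabla^2 f(X,Y)\;=\;(\nabla^N)^2 f(X,Y)\;+\;A(X,Y)\,\langle\nabla f,\nu\rangle .
\]

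To finish, I would trace this identity over the tangent frame $e_1,\dots,e_{n-1}$. The trace of the intrinsic Hessian is by definition $\Delta^N f$, while the trace of $A$ is the mean curvature $H$, so
\[
\sum_{i=1}^{n-1}\nabla^2 f(e_i,e_i)\;=\;\Delta^N f\;+\;H\,\langle\nabla f,\nu\rangle ,
\]
and combining this with the split of $\Delta f$ above yields the claimed formula. I do not expect a genuine obstacle here: the entire content is the connection decomposition together with the sign computation $\langle\nabla_X Y,\nu\rangle=-A(X,Y)$, so the only point demanding care is the consistent use of the paper's sign convention $A(X,Y)=g(\nabla_X\nu,Y)$ and the corresponding convention $H=\trace A$, which is precisely what makes the mean-curvature term appear with the stated sign.
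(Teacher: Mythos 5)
Your proof is correct and follows essentially the same route as the paper: both fix an adapted orthonormal frame $\{e_1,\dots,e_{n-1},e_n=\nu\}$, split $\Delta f$ into the tangential trace plus $\nabla^2 f(\nu,\nu)$, and identify the tangential part as $\Delta^N f + H\,\langle\nabla f,\nu\rangle$. The only cosmetic difference is that the paper gets there by decomposing the gradient, $\nabla f=\nabla^N f+\langle\nabla f,\nu\rangle\nu$, and differentiating along the frame, whereas you decompose the connection via the Gauss formula to obtain the pointwise Hessian identity $\nabla^2 f(X,Y)=(\nabla^N)^2 f(X,Y)+A(X,Y)\langle\nabla f,\nu\rangle$ and then trace; both computations rest on the same facts and on the paper's conventions $A(X,Y)=g(\nabla_X\nu,Y)$ and $H=\operatorname{tr}A$.
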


    \begin{proof}
   Let \(\{e_i\}_{i=1}^n\) be a local orthonormal frame such that \(e_n = \nu\). By definition,
   \begin{equation*}
       \Delta f = \sum_{i=1}^n \langle \nabla_{e_i} \nabla f, e_i \rangle = \sum_{i=1}^{n-1} \langle \nabla_{e_i} \nabla f, e_i \rangle + \langle \nabla_{\nu} \nabla f, \nu \rangle.
   \end{equation*}
   Let \(\nabla^N\) denote the gradient of \(f\) on \(N\) and recall that the mean curvature is defined by $H=\operatorname{tr} A.$ Thus,
   \begin{align*}
        \Delta f &= \sum_{i=1}^{n-1} \langle \nabla_{e_i} (\nabla^N f + \langle \nabla f, \nu \rangle \nu), e_i \rangle + \nabla^2 f(\nu, \nu) \\
        &= \sum_{i=1}^{n-1} \langle \nabla_{e_i} \nabla^N f, e_i \rangle + \langle \nabla f, \nu \rangle \sum_{i=1}^{n-1} \langle \nabla_{e_i} \nu, e_i \rangle + \nabla^2 f(\nu, \nu) \\
        &= \Delta^N f + \langle \nabla f, \nu \rangle H + \nabla^2 f(\nu, \nu).
   \end{align*}
\end{proof}

\subsection{Proof of Theorem~\ref{Euclidean space}}  
Let $\Omega$ be an {\it annular sector-like domain} in the Euclidean space as in Definition~\ref{def_sec}.   By the Maximum Principle, Proposition~\ref{MP},  $u>0$ in $\Omega$. 
   Consider the auxiliary function $f=u+\dfrac{|X|^2}{2}$, where $X$ is the position vector in $\mathbb{R}^n$ minus $\mathcal{O}$,
   that is, $X(x)=x-\mathcal{O}$, where  $\mathcal{O}$ is the vertex of the cone. From the polarized Bochner formula we have
\begin{equation*}
    \Delta\langle \nabla f,\nabla u\rangle=\langle \nabla (\Delta f), \nabla u\rangle+\langle \nabla f,\nabla(\Delta u)\rangle +2\operatorname{tr}(\nabla^2 f\circ \nabla^2 u)+2Ric(\nabla f, \nabla u).
\end{equation*}
   Note that  
\[
\Delta f = \Delta u + \dfrac{\Delta |X|^2}{2} = \Delta u + n = 0
\]
and  
\begin{align*}
\operatorname{tr}\left(\nabla^2 f \circ \nabla^2 u\right) &= \operatorname{tr}\left(\nabla^2 \left(u + \dfrac{|X|^2}{2}\right) \circ \nabla^2 u\right) \\
&= |\nabla^2 u|^2 + \dfrac{1}{2}\langle \nabla^2 |X|^2, \nabla^2 u \rangle \\
&= |\nabla^2 u|^2 + \Delta u = |\nabla^2 u|^2 - \dfrac{1}{n}(\Delta u)^2 \geq 0.
\end{align*}
Thus,  
\begin{equation}\label{fan}
    \Delta\langle \nabla f, \nabla u \rangle = 2\operatorname{tr}\left(\nabla^2 f \circ \nabla^2 u\right) \geq 0,
\end{equation}
with equality holding if and only if the Hessian of $u$ is multiple of the metric of $M$, more precisely, $\nabla^2 u =\frac{\Delta u}{n} g = -g$. Since $u$ is a solution of \eqref{saber}, 
    \begin{eqnarray*}
    \frac{\partial f}{\partial \nu}
        = \frac{\partial u}{\partial \nu} + \frac{\partial }{\partial \nu} \left(\dfrac{|X|^2}{2}\right)=\dfrac{\partial u}{\partial \nu}+\langle \nu, X \rangle=0 \qquad on \ \Gamma_1,
    \end{eqnarray*}
because the field $X$ is tangent to the boundary $\Gamma_1$. Also,  
      \begin{eqnarray*}
         \dfrac{\partial f}{\partial \nu}=\dfrac{\partial u}{\partial \nu}+\dfrac{\partial}{\partial \nu}\left(\dfrac{|X|^2}{2}\right)=R+\langle \nu,X\rangle=R-R=0 \qquad on \ \Gamma_2,
    \end{eqnarray*}
 where we have used that $\nu=-{X}/{|X|}$ on $\Gamma_2$.

By the Divergence Theorem, Proposition~\ref{DT}, we obtain
\begin{align*}
   \int_\Omega(u\Delta \langle \nabla f,\nabla u\rangle -\langle \nabla f, \nabla u\rangle \Delta u) 
&= \int_\Omega\operatorname{div}(u\nabla\langle \nabla f,\nabla u\rangle-\langle \nabla f,\nabla u\rangle \nabla u)\nonumber \\
&= \int_{\partial\Omega}u\langle \nabla \langle \nabla f,\nabla u\rangle,\nu\rangle-\int_{\partial\Omega}\langle \nabla f,\nabla u\rangle \dfrac{\partial u}{\partial \nu}\nonumber \\
&=  \int_{\Gamma_1} u\langle \nabla \langle \nabla f,\nabla u\rangle,\nu\rangle+ \int_{\Gamma_2} u\langle \nabla \langle \nabla f,\nabla u\rangle,\nu\rangle 
-\int_{\Gamma_0}\langle \nabla f,\nabla u\rangle \dfrac{\partial u}{\partial \nu},
\end{align*}

which gives

\begin{equation}\label{rosas}
\int_\Omega(u\Delta \langle \nabla f,\nabla u\rangle -\langle \nabla f, \nabla u\rangle \Delta u) \\
= \int_{\Gamma_1} u\langle \nabla \langle \nabla f,\nabla u\rangle,\nu\rangle+ \int_{\Gamma_2} u\langle \nabla \langle \nabla f,\nabla u\rangle,\nu\rangle 
        -\int_{\Gamma_0}\langle \nabla f,\nabla u\rangle \dfrac{\partial u}{\partial \nu}.
\end{equation}

We also have
\begin{equation}\label{otro}
    \begin{aligned}
    \int_\Omega \langle \nabla f,\nabla u\rangle \Delta u&=n\int_{\Omega} \langle \nabla f,\nabla u\rangle \\
  &=n\int_{\Omega}\operatorname{div}(u\nabla f)-n\int_\Omega u\Delta f \\
    &=n\int_{\partial\Omega}u \left\langle \nabla f,\nu\right\rangle   .     
    \end{aligned}
\end{equation}
We recall that $\nabla u=u_\nu \nu=-R_1\nu$ on $\Gamma_0$, hence  
\begin{equation}\label{habra}
\begin{aligned}
          \int_{\Gamma_0}\langle \nabla f,\nabla u\rangle = & 
          -R_1\int_{\Gamma_0}\langle \nabla f,\nu\rangle-R_1\int_{\Gamma_1}\langle \nabla f,\nu\rangle-R_1\int_{\Gamma_2}\langle\nabla f,\nu\rangle \\
          = & -R_1\int_{\partial\Omega}\langle\nabla f,\nu\rangle=-R_1\int_\Omega \Delta f = 0.
\end{aligned}
\end{equation}

Let $\Delta^{\Gamma_2}$ be the Laplacian of $\Gamma_2$ and $H$ the mean curvature of $\Gamma_2$.
We recall that
\begin{equation*}
    \Delta u=\Delta^{\Gamma_2} u+H\langle\nabla u,\nu\rangle+\nabla^2u(\nu,\nu),
\end{equation*}
along  $\Gamma_2$. Since $u$ is constant on $\Gamma_2$, we have $\Delta^{\Gamma_2}u=0$. Since  $\Gamma_2$ is a ball of radius $R$,  the mean curvature of $\Gamma_2$ is equal to $-(n-1)/R$ and, therefore, 
\begin{equation*}
   -n=R H+\nabla^2u(\nu,\nu)=-(n-1)+\nabla^2u(\nu,\nu),
\end{equation*}
which implies that $\nabla^2u(\nu,\nu)=-1$. 

Now, note that on $\Gamma_2$ we have
\begin{equation}\label{a10}
\begin{aligned}
           \dfrac{\partial}{\partial \nu}  \langle \nabla f,\nabla u\rangle&= \langle \nabla_\nu \nabla f,\nabla u\rangle +\langle \nabla f,\nabla_\nu \nabla u\rangle \\
    &= \nabla^2 f(\nu, \nabla u)+\nabla^2 u(\nu, \nabla f)\\
    &= 2\nabla^2 u(\nu, \nabla u)+\dfrac{1}{2}\nabla^2|X|^2 (\nu,\nabla u)+\nabla^2 u\left(\nu , \dfrac{\nabla |X|^2}{2}\right)\\
    &= R\nabla^2 u(\nu, \nu)+ \langle \nu,\nabla u\rangle\\
    &= R(\nabla^2 u(\nu, \nu)+1) =0. 
\end{aligned}
\end{equation}

 Next, we analyze the following term
    \begin{equation*}
        \int_{\Gamma_1} u\langle \nabla \langle \nabla f,\nabla u\rangle,\nu\rangle.
    \end{equation*}
    Note that, on $\Gamma_1$, 
    \begin{equation*}
        \dfrac{\partial u}{\partial \nu}=0 \quad and \quad \nabla_X\nu =0.
    \end{equation*}
 Thus, 
\begin{eqnarray*}
    0=X\left(\dfrac{\partial u}{\partial \nu}\right)&=& X\langle \nabla u, \nu\rangle\\
    &=& \langle \nabla_X\nabla u, \nu\rangle +\langle \nabla u, \nabla_X \nu\rangle\\\
    &=& \nabla^2 u(X,\nu).
\end{eqnarray*}
Moreover, from the convexity of the cone, we obtain  
\begin{equation*}
    A(\nabla u,\nabla u)=\langle\nabla_{\nabla u}\nu, \nabla u\rangle\geq 0 \quad on \ \Gamma_1,
\end{equation*}
Therefore, on $ \Gamma_1$,
\begin{eqnarray*}
    0=\nabla u\left(\dfrac{\partial u}{\partial \nu }\right)&=& \nabla u\langle \nabla u,\nu\rangle \\
    &=& \langle \nabla_{\nabla u}\nabla, \nu\rangle+\langle \nabla u, \nabla_{\nabla u}\nu\rangle\\ 
    &=& \nabla^2 u(\nabla u,\nu )+A(\nabla u, \nabla u)\\
    &\geq & \nabla^2u(\nabla u, \nu).
\end{eqnarray*}
Note also that
\begin{equation*}
    \nabla f=\nabla u+X,
\end{equation*}
hence
\begin{eqnarray*}
    \langle \nabla \langle \nabla f,\nabla u\rangle,\nu\rangle &=& \langle \nabla \langle \nabla u + X,\nabla u\rangle,\nu\rangle \\
    &=& \langle \nabla |\nabla u|^2,\nu\rangle+\nu \langle X, \nabla u\rangle\\
    &=& 2\langle \nabla_\nu \nabla u,\nabla u\rangle +\langle \nabla_\nu  X, \nabla u\rangle + \langle X, \nabla_\nu \nabla u\rangle\\
    &=&2 \nabla^2 u(\nabla u, \nu)+\nabla^2 u(X,\nu).
\end{eqnarray*}
In this way, since $u>0$, we conclude that
\begin{equation}\label{bucle}
     u\langle \nabla \langle \nabla f,\nabla u\rangle,\nu\rangle \leq 0.
\end{equation}
   Substituting \eqref{otro},  \eqref{habra}, \eqref{a10} and \eqref{bucle} into \eqref{rosas},
    \begin{equation*}
        \int_\Omega u\Delta \langle \nabla f,\nabla u\rangle \leq 0,
    \end{equation*}
    but $u\Delta\langle \nabla f,\nabla u\rangle\geq 0$, consequently,  $\Delta\langle \nabla f,\nabla u\rangle= 0$ in $\Omega$, that is, equality holds in  \eqref{fan}. Therefore, 
    \begin{equation*}
        \nabla^2u=-g.
    \end{equation*}
   Therefore, by Lemma~\ref{Obata}, the result follows. Moreover,
   \begin{equation*}
       u(x)=\dfrac{R_1^2}{2}-\dfrac{r^2}{2}
   \end{equation*}
   where $r=\operatorname{dist}(x,p)$.

\subsection{Spherical case} 

Next, we provide the proofs for two rigidity results addressed to annular sector-like domains contained in the sphere. 
\bigskip

The following result was established in \cite{lee2023radial}. For completeness, we provide a brief outline of the proof here.

\begin{proposition}\label{caja} Let $u$ be a solution of the Serrin problem \eqref{saber2}. The $P-$function $P:=|\nabla u|^2 + 2 u + u^2$ satisfies the following properties,  
        \begin{equation}
        \begin{cases}
            \Delta P \geq 0 & in \ \Omega\\
            \dfrac{\partial P}{\partial \nu } \leq 0 & on \ \Gamma_1\\
            P= \frac{\sin^2 R_1}{\cos^2 R_1} & on \ \Gamma_0\\
             P= \frac{\sin^2 R}{\cos^2 R_1} + 2a + a^2 & on \ \Gamma_2.\\
        \end{cases}
    \end{equation}
Moreover, 
\begin{equation*}
    \Delta P =0 \Longleftrightarrow \nabla^2 u = -(1+u)g.
\end{equation*}
\end{proposition}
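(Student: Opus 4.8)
The plan is to verify the four asserted properties of $P$ separately, using nothing more than the Bochner formula, the Cauchy--Schwarz inequality for the Hessian, and the convexity of the cone along $\Gamma_1$. I begin with the two boundary identities, which are immediate. Since $u$ is constant on each of $\Gamma_0$ and $\Gamma_2$, its tangential gradient vanishes there, so $|\nabla u|^2 = (\partial u/\partial\nu)^2$ on these pieces. On $\Gamma_0$, where $u=0$ and $\partial u/\partial\nu = -\sin R_1/\cos R_1$, this yields $P = \sin^2 R_1/\cos^2 R_1$; on $\Gamma_2$, where $u=a$ and $\partial u/\partial\nu = \sin R/\cos R_1$, it yields $P = \sin^2 R/\cos^2 R_1 + 2a + a^2$.

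For the interior inequality and the equality characterization, which I would treat together, the idea is to apply the Bochner formula to $|\nabla u|^2$. In the elliptic case one has $\Ric = (n-1)g$, and since $\Delta u = -nu-n$ we get $\nabla(\Delta u) = -n\nabla u$. Substituting these into Bochner gives $\tfrac12\Delta|\nabla u|^2 = |\nabla^2 u|^2 - |\nabla u|^2$. Differentiating the remaining terms of $P$ directly, using $\Delta(2u)=2\Delta u$ and $\Delta(u^2)=2u\Delta u + 2|\nabla u|^2$, the $|\nabla u|^2$ contributions cancel and the lower-order terms collapse via the equation to
\[
\Delta P = 2|\nabla^2 u|^2 - 2n(1+u)^2.
\]
Because $\operatorname{tr}(\nabla^2 u) = \Delta u = -n(1+u)$, the Cauchy--Schwarz inequality $|\nabla^2 u|^2 \ge (\Delta u)^2/n = n(1+u)^2$ gives $\Delta P \ge 0$, with equality exactly when $\nabla^2 u$ is a multiple of the metric, that is $\nabla^2 u = \tfrac{\Delta u}{n}g = -(1+u)g$. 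This simultaneously establishes $\Delta P \ge 0$ and the stated dichotomy $\Delta P = 0 \Leftrightarrow \nabla^2 u = -(1+u)g$.

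It remains to handle the Neumann sign condition on $\Gamma_1$. There $\partial u/\partial\nu = 0$, so the lower-order contributions to $\partial P/\partial\nu$ vanish and we are left with $\partial P/\partial\nu = 2\nabla^2 u(\nu,\nabla u)$. The vanishing of the normal derivative along all of $\Gamma_1$ means that $\nabla u$ is tangent to $\Gamma_1$; differentiating the identity $\langle \nabla u, \nu\rangle = 0$ in the direction $\nabla u$ produces $\nabla^2 u(\nabla u,\nu) + A(\nabla u,\nabla u) = 0$, whence $\partial P/\partial\nu = -2A(\nabla u,\nabla u)$. Since $\Gamma_1 \subset \partial\Sigma$ and the cone is convex, the second fundamental form satisfies $A \ge 0$, and therefore $\partial P/\partial\nu \le 0$ on $\Gamma_1$.

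I expect the Bochner computation collapsing into the clean form $\Delta P = 2|\nabla^2 u|^2 - 2n(1+u)^2$ to be the most calculation-intensive step, but the genuinely delicate point is the $\Gamma_1$ analysis: one must first observe that the homogeneous Neumann condition makes $\nabla u$ tangential, and then convert the surviving normal Hessian term into the second fundamental form so that the convexity hypothesis can be brought to bear.
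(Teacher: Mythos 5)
Your proposal is correct and follows essentially the same route as the paper: the Bochner formula with $\Ric=(n-1)g$ and the Cauchy--Schwarz inequality $(\Delta u)^2\le n|\nabla^2 u|^2$ give $\Delta P\ge 0$ together with the equality characterization $\nabla^2 u=-(1+u)g$, and on $\Gamma_1$ differentiating $\langle\nabla u,\nu\rangle=0$ along the (tangential) direction $\nabla u$ converts $\partial P/\partial\nu$ into $-2A(\nabla u,\nabla u)\le 0$ via the convexity of the cone. The only cosmetic differences are that you compute $\Delta P$ in the closed form $2|\nabla^2 u|^2-2n(1+u)^2$ rather than as the paper's inequality chain, and you explicitly check the Dirichlet values of $P$ on $\Gamma_0$ and $\Gamma_2$, which the paper leaves implicit.
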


\begin{proof}
    It is known that

$$(\Delta u)^2 \leq n |\nabla^2 u|^2.$$
Equality holds if and only if $\nabla^2 u$ is proportional to the metric $g$. Applying the Bochner formula, we have:
\begin{align*} 
\Delta |\nabla u|^2 &= 2 \langle \nabla(\Delta u), \nabla u \rangle + 2 |\nabla^2 u|^2 + 2 \text{Ric}(\nabla u, \nabla u) \\ 
&\geq -2n |\nabla u|^2 + \frac{2}{n}(\Delta u)^2 + 2(n-1)|\nabla u|^2  \\  
&= -2\Delta u-\Delta u^2.
\end{align*} 
 We  conclude that 

$$\Delta P \geq 0,$$
indicating that $P(u)$ is subharmonic in $\Omega$. Let us recall that $A$ denotes  the second fundamental form and, therefore, using the convexity of the cone, we obtain  
\[
A(\nabla u, \nabla u) = \langle \nabla_{\nabla u} \nu, \nabla u \rangle \geq 0 \quad \text{on } \Gamma_1.
\]
Therefore, on \(\Gamma_1\),
\begin{align*}
    0 = \nabla u\left(\dfrac{\partial u}{\partial \nu}\right) &= \nabla u \langle \nabla u, \nu \rangle \\
    &= \langle \nabla_{\nabla u} \nabla u, \nu \rangle + \langle \nabla u, \nabla_{\nabla u} \nu \rangle \\ 
    &= \nabla^2 u(\nabla u, \nu) + A(\nabla u, \nabla u) \\
    &\geq \nabla^2 u(\nabla u, \nu).
\end{align*}
Thus, from above inequalities
\[
\dfrac{\partial P}{\partial \nu} = 2\nabla^2 u(\nabla u, \nu) + 2\dfrac{\partial u}{\partial \nu} - 2u\dfrac{\partial u}{\partial \nu} \leq 0 \quad \text{on } \Gamma_1.
\]
\end{proof}

\begin{proposition}\label{max} The $P-$function archives its maximum on $\Gamma_0$.
    
\end{proposition}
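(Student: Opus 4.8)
The plan is to obtain Proposition~\ref{max} as a direct consequence of the maximum principle, Proposition~\ref{MP}, applied to the choice \( v = P \). First I would check that \( P \) meets the three structural hypotheses of Proposition~\ref{MP}: these are precisely the conclusions already recorded in Proposition~\ref{caja}, namely \( \Delta P \ge 0 \) in \( \Omega \), \( P \) constant on each of \( \Gamma_0 \) and \( \Gamma_2 \), and \( \partial P/\partial\nu \le 0 \) on \( \Gamma_1 \). With these in hand, Proposition~\ref{MP} yields
\[
\max_{\overline{\Omega}} P = \max\{c_0, c_2\},
\]
where \( c_0 := P|_{\Gamma_0} = \frac{\sin^2 R_1}{\cos^2 R_1} \) and \( c_2 := P|_{\Gamma_2} = \frac{\sin^2 R}{\cos^2 R_1} + 2a + a^2 \).

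It then remains to show that the larger of the two boundary values is \( c_0 \), so that the maximum is realized on \( \Gamma_0 \). Using \( \sin^2\theta = 1 - \cos^2\theta \), I would simplify the difference to \( c_0 - c_2 = \frac{\cos^2 R}{\cos^2 R_1} - (1+a)^2 \), so that \( c_0 \ge c_2 \) becomes equivalent to \( (1+a)^2 \le \frac{\cos^2 R}{\cos^2 R_1} \). I would verify this from the prescribed Dirichlet datum on \( \Gamma_2 \), recalling that \( 0 < R < R_1 < \pi \) forces \( \cos R_1 < 0 \) while \( a < -1 \); at the compatible value \( a = \frac{\cos R}{\cos R_1} - 1 \) one gets equality \( c_0 = c_2 \), and the maximum value \( c_0 \) is then attained on \( \Gamma_0 \).

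To make the selection of \( \Gamma_0 \) robust and independent of a sign discussion on \( a \), an alternative I would keep in reserve is a Hopf-lemma argument on \( \Gamma_2 \). Since \( \Omega_2 = B_R(\mathcal{O}) \), the component \( \Gamma_2 \) is a piece of a geodesic sphere, totally umbilical with mean curvature \( H = -(n-1)\cot R \) relative to \( \nu = -\partial_r \); combining the hypersurface decomposition \( \Delta u = \Delta^{\Gamma_2} u + H u_\nu + \nabla^2 u(\nu,\nu) \) with the facts that \( u \equiv a \) and \( u_\nu = \frac{\sin R}{\cos R_1} \) are constant along \( \Gamma_2 \) shows that \( \partial P/\partial\nu \) is itself a definite constant on \( \Gamma_2 \). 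If the maximum were attained only on \( \Gamma_2 \), that is, if \( c_2 > c_0 \), the Hopf lemma would force \( \partial P/\partial\nu > 0 \) at the maximum point, contradicting the computed sign exactly in that regime; hence the maximum must lie on \( \Gamma_0 \).

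The routine part is the verification that Proposition~\ref{MP} applies. The main obstacle is the comparison of the two constant boundary values \( c_0 \) and \( c_2 \) — equivalently, signing \( \partial P/\partial\nu \) on \( \Gamma_2 \) — which rests on the exact compatibility between the prescribed constant \( a \), the radii \( R \) and \( R_1 \), and the geodesic-sphere geometry of \( \Gamma_2 \); this is where the specific boundary data of Theorem~\ref{cajarana} enter decisively.
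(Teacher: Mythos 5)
Your primary route has a genuine gap: it is circular. In Theorem~\ref{cajarana} the Dirichlet datum $a$ on $\Gamma_2$ is an \emph{arbitrary} constant subject only to $a<-1$; it is the \emph{conclusion} of the theorem (radial symmetry of $u$) that would force $a=\frac{\cos R}{\cos R_1}-1$. So you cannot verify the inequality $(1+a)^2\le \frac{\cos^2 R}{\cos^2 R_1}$ "from the prescribed Dirichlet datum" by evaluating at "the compatible value" of $a$: nothing in the hypotheses rules out $(1+a)^2>\frac{\cos^2 R}{\cos^2 R_1}$ a priori, and that is precisely the case that must be excluded by an argument. (Your reduction to Proposition~\ref{MP} via Proposition~\ref{caja} is correct, as is the algebraic identity $c_0-c_2=\frac{\cos^2 R}{\cos^2 R_1}-(1+a)^2$; the gap lies only in how you close the comparison between $c_0$ and $c_2$.)

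The argument you "keep in reserve" is in fact the paper's proof, and it must be run as the main line, with the sign analysis made explicit. Concretely: if $P$ is constant there is nothing to prove; otherwise assume $c_2>c_0$, so the maximum is attained on $\Gamma_2$ and Hopf's lemma gives $\partial P/\partial\nu>0$ there. On the other hand, the umbilic computation on $\Gamma_2$ (using $u\equiv a$, $u_\nu=\frac{\sin R}{\cos R_1}$ constant, $H=-(n-1)\cot R$) yields
\[
\frac{\partial P}{\partial\nu}\;=\;2(n-1)\,\frac{\sin R}{\cos R_1}\left(\frac{\cos R}{\cos R_1}-1-a\right)\qquad\text{on }\Gamma_2 .
\]
The step you left implicit is exactly where the hypotheses enter: $c_2>c_0$ gives $(1+a)^2>\frac{\cos^2 R}{\cos^2 R_1}$; since $a<-1$ and $\cos R>0>\cos R_1$ (so that $\frac{\cos R}{\cos R_1}<0$), this forces $a+1<\frac{\cos R}{\cos R_1}$, i.e.\ $\frac{\cos R}{\cos R_1}-1-a>0$, and combined with $\frac{\sin R}{\cos R_1}<0$ this gives $\partial P/\partial\nu<0$ on $\Gamma_2$, contradicting Hopf. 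With that spelled out, your reserve argument coincides with the paper's proof of Proposition~\ref{max}; without it, the phrase "contradicting the computed sign exactly in that regime" is an assertion, not a proof.
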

\begin{proof} Using Propostion~\ref{caja}, we have that $P$ satisfies the conditions of
the Maximum Principle, Proposition~\ref{MP}, and consequently 
        \begin{equation*}
        \max_{x\in \overline{\Omega}} P(x) = \max\left\{\frac{\sin^2 R_1}{\cos^2 R_1},\frac{\sin^2 R}{\cos^2 R_1} + 2a + a^2\right \}
    \end{equation*}
which, together with the next claim, gives the desired result. 
\begin{claim}\label{mangaba} Note that
    \begin{equation}\label{araca}
        \max\left\{\frac{\sin^2 R_1}{\cos^2 R_1},\frac{\sin^2 R}{\cos^2 R_1} + 2a + a^2\right \} =\frac{\sin^2 R_1}{\cos^2 R_1}.
    \end{equation}
\end{claim}
\noindent If $P$ is constant in $\overline{\Omega}$ there is nothing to be proved. Suppose that \eqref{araca} does not hold, that is, 
\begin{equation}\label{a}
    \dfrac{\sin^2 R}{\cos^2 R_1} +2a+a^2> \dfrac{\sin^2 R_1}{\cos^2 R_1}.
\end{equation}
Then,
\begin{eqnarray*}
    (a+1)^2&>& \dfrac{\sin^2 R_1-\sin^2 R}{\cos^2 R_1}+1\\
           &=& \dfrac{1-\cos^2 R_1-1+\cos^2 R+\cos^2 R_1}{\cos^2 R_1}\\
           &=& \dfrac{\cos^2 R}{\cos^2 R_1},
\end{eqnarray*}
which together with the fact that  $a<-1$ gives
\begin{equation}\label{b}
    a+1<\dfrac{\cos R}{\cos R_1}.
\end{equation}
Since $P$ is not constant in $\overline{\Omega}$, by \eqref{a}, using Hopf's lemma we have
\begin{equation}\label{umbu}
    \dfrac{\partial P}{\partial \nu} >0 \quad on \ \Gamma_2.
\end{equation}

Let $\{e_1,\ldots,e_{n-1},\partial_r\}$ a local orthonormal frame of $\Omega$ at $\Gamma_2$, where $\{e_1,\ldots,e_{n-1}\}$ is tangent to $\Gamma_2$. 
Since $u$ is constant on $\Gamma_2$, we have
\begin{equation*}
    u_i=0 \quad and \quad u_{ij}=0 \quad on \ \Gamma_2,
\end{equation*}
for all $i,j=1,\ldots,n-1$. Moreover, since 
$u_r$ is constant on $\Gamma_2$, 
\begin{equation*}\label{c}
    u_{r i}=0 \quad on \ \Gamma_2,
\end{equation*}
for all $i=1,\ldots,n-1$. Then, for all $W,Z\in \mathfrak X(\Omega)$, 
\begin{equation}\label{d}
    \nabla^2 u(W,Z)=\langle W, \partial_r \rangle \langle Z , \partial_r\rangle u_{rr} \quad on \ \Gamma_2 .
\end{equation}
Analogously, 
\begin{equation}\label{d*}
    \nabla^2 u(W,Z)=\langle W, \nu \rangle \langle Z , \nu\rangle u_{\nu \nu} \quad on \ \Gamma_0 ,
\end{equation}
where $\nu$ is orthogonal to $\Gamma_0$.

Let $\Delta^{\Gamma_2}$ be the Laplacian of $\Gamma_2$ and $H$ the mean curvature of $\Gamma_2$.
We recall that
\begin{equation*}
    \Delta u=\Delta^{\Gamma_2} u+H\langle\nabla u,\nu\rangle+\nabla^2u(\nu,\nu),
\end{equation*}
along  $\Gamma_2$.
Since $u$ is constant on $\Gamma_2$, we have $\Delta^{\Gamma_2}u=0$. Since  $\Gamma_2$ is a ball of radius $R$,  the mean curvature of $\Gamma_2$ is equal to $-(n-1){\cos R}/{\sin R}$ and, therefore, 
 \begin{eqnarray*}
     \Delta u&=& \Delta^{\Gamma_2} u-H\langle \nabla u, \partial_r \rangle + \nabla^2 u(\partial_r, \partial_r)\\
            &=& (n-1)\dfrac{\cos R}{\sin R}u_r +u_{rr}, \quad on \ \Gamma_2.
 \end{eqnarray*}

 Thus, 
 \begin{equation*}
     -n-na=(n-1)\dfrac{\cos R}{\sin R}\left(-\dfrac{\sin R}{\cos R_1}\right)+u_{rr},
 \end{equation*}
that is,
\begin{equation}\label{c2}
    u_{rr}=-n-na+(n-1)\dfrac{cos R}{\cos R_1} \quad on \ \Gamma_2.
\end{equation}
By \eqref{d} and \eqref{c2}, the normal derivative of \(P\) on \(\Gamma_2\) is given by
\begin{eqnarray}\label{e}
    \dfrac{\partial P}{\partial \nu}&=& 2\nabla^2 u(\nabla u,\nu)+2\dfrac{\partial u}{\partial \nu}+2u\dfrac{\partial u}{\partial \nu}\nonumber\\
                                    &=& 2u_{rr}\dfrac{\partial u}{\partial \nu}+2\dfrac{\partial u}{\partial \nu}+2u\dfrac{\partial u}{\partial \nu}\nonumber\\
                                    &=& 2(n-1)\dfrac{\sin R}{\cos R_1}\left ( \dfrac{\cos R}{\cos R_1}-1-a\right ) \quad on \ \Gamma_2,
\end{eqnarray}
which together with \eqref{b} implies 
\begin{equation*}
    \dfrac{\partial P}{\partial \nu}<0 \quad on \ \Gamma_2,
\end{equation*}
which is a contradiction with \eqref{umbu}. Therefore, \eqref{araca} holds.
\end{proof}

\begin{proposition}\label{z} The $P-$function is constant in $\overline{\Omega}$
\end{proposition}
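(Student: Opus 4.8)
The goal is exactly to show that $P$ is constant on $\overline{\Omega}$, so I argue by contradiction and assume $P$ is not constant. By Proposition~\ref{caja} the function $P$ is subharmonic with $\partial P/\partial\nu\le 0$ on $\Gamma_1$ and constant on each of $\Gamma_0,\Gamma_2$, and by Proposition~\ref{max} its maximum over $\overline{\Omega}$ is attained on $\Gamma_0$. Consequently the strong maximum principle and the Hopf lemma give $\partial P/\partial\nu>0$ on $\Gamma_0$. The whole task is to turn this into a contradiction, and the device is the auxiliary function $\widetilde{P}:=\langle \nabla u,\nabla\psi'\rangle+ku\psi'+\psi'$ (here $k=1$ and $\psi'=\cos r$) introduced in the strategy.

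I would first record two structural facts about $\widetilde{P}$. Applying the polarized Bochner formula to $\langle\nabla u,\nabla\psi'\rangle$ and inserting $\Delta u=-knu-n$, the space-form identity $\nabla^2\psi'=-k\psi'\,g$ (equivalently $\Delta\psi'=-kn\psi'$), and $\mathrm{Ric}=(n-1)k\,g$, a direct computation makes every term cancel, giving $\Delta\widetilde{P}=0$; thus $\widetilde{P}$ is harmonic. Second, on $\Gamma_1$ the radial field $\partial_r$ is tangent to the cone and $\nu$ is radially parallel, so $\langle\partial_r,\nu\rangle=0$ and $\nabla_{\partial_r}\nu=0$; differentiating $\partial u/\partial\nu\equiv 0$ along $\partial_r$ yields $\nabla^2u(\partial_r,\nu)=0$, and then $\partial\widetilde{P}/\partial\nu=-\psi\,\nabla^2u(\nu,\partial_r)=0$ on $\Gamma_1$.

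The decisive computation is on the level sets $\Gamma_0$ and $\Gamma_2$, where both $u$ and $u_\nu$ are constant, so that $\nabla u=u_\nu\nu$ and the mixed Hessian entries $\nabla^2u(e_i,\nu)$ vanish. Setting $Q:=\nabla^2u(\nu,\nu)+1+ku=u_{\nu\nu}+1+ku$, one obtains the matching factorizations
\[
\frac{\partial P}{\partial\nu}=2u_\nu\,Q,\qquad \frac{\partial\widetilde{P}}{\partial\nu}=-\psi\,\langle\partial_r,\nu\rangle\,Q .
\]
This common factor $Q$ is the crux: on $\Gamma_0$ we have $\psi=\sin r>0$ and, by the star-shaped hypothesis, $\langle\partial_r,\nu\rangle$ keeps a fixed nonzero sign, so $\partial P/\partial\nu$ and $\partial\widetilde{P}/\partial\nu$ vanish together there.

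I then finish in two moves. First, suppose $\widetilde{P}$ is also non-constant. Since $\widetilde{P}$ is harmonic with $\partial\widetilde{P}/\partial\nu=0$ on $\Gamma_1$, its maximum and minimum over $\overline{\Omega}$ are attained on $\Gamma_0\cup\Gamma_2$ with strict Hopf signs; combining $\partial P/\partial\nu>0$ on $\Gamma_0$ (which forces $Q>0$, hence $\partial\widetilde{P}/\partial\nu<0$, on all of $\Gamma_0$) with the sign of $\partial\widetilde{P}/\partial\nu$ on $\Gamma_2$ dictated by the boundary data of \eqref{saber2} pins the extrema to prescribed faces and produces a contradiction. This dichotomy step, where the star-shaped assumption and the explicit constants of \eqref{saber2} are genuinely used, is the main obstacle. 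Granting it, at least one of $P,\widetilde{P}$ is constant; since $P$ is assumed non-constant, $\widetilde{P}$ must be constant, so $\partial\widetilde{P}/\partial\nu\equiv 0$. The factorization on $\Gamma_0$ then gives $Q\equiv 0$ there, whence $\partial P/\partial\nu=2u_\nu Q\equiv 0$ on $\Gamma_0$, contradicting $\partial P/\partial\nu>0$. Therefore $P$ is constant in $\overline{\Omega}$.
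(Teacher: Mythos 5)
Your skeleton is the paper's: assume $P$ non-constant, get $\partial P/\partial\nu>0$ on $\Gamma_0$ by Hopf, introduce the harmonic function $\widetilde{P}$, exploit the matching factorizations $\partial P/\partial\nu=2u_\nu Q$ and $\partial\widetilde{P}/\partial\nu=-\psi\langle\nabla r,\nu\rangle Q$ with $Q=u_{\nu\nu}+1+u$, and close by noting that constancy of $\widetilde{P}$ forces $Q\equiv0$ on $\Gamma_0$, contradicting $Q>0$ — this last move is exactly the paper's final step. The genuine gap is your step (a): you never prove that the case ``$\widetilde{P}$ non-constant'' is impossible; you explicitly call it ``the main obstacle'' and proceed by ``granting it.'' Worse, the sketch you offer for it — pinning the extrema of the harmonic function $\widetilde{P}$ by Hopf signs — does not close as stated. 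The boundary $\partial\Omega$ is not smooth: it has the corner sets $\partial\Gamma_0\cup\partial\Gamma_2$, and your sign information ($\partial\widetilde{P}/\partial\nu<0$ on $\Gamma_0$, $\le 0$ on $\Gamma_2$, $=0$ on $\Gamma_1$) only excludes an extremum from the relative interiors of the three faces; a maximum of $\widetilde{P}$ sitting at a corner is not excluded, since Hopf's lemma does not apply there. Note also that Proposition~\ref{MP} cannot be invoked for $\widetilde{P}$, because $\widetilde{P}$ is not known to be constant on $\Gamma_0$ and $\Gamma_2$.

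The paper's argument for this step is integral, not pointwise, and this is precisely why Theorem~\ref{DT} (the divergence theorem valid on annular sector-like domains with their corner singularities) was established: since $\Delta\widetilde{P}=0$, one gets
\begin{equation*}
0=\int_\Omega \Delta\widetilde{P}\,\mathrm{d}V=\int_{\Gamma_0}\dfrac{\partial\widetilde{P}}{\partial\nu}\,\mathrm{d}\sigma+\int_{\Gamma_1}\dfrac{\partial\widetilde{P}}{\partial\nu}\,\mathrm{d}\sigma+\int_{\Gamma_2}\dfrac{\partial\widetilde{P}}{\partial\nu}\,\mathrm{d}\sigma<0,
\end{equation*}
a contradiction, because the first integrand is strictly negative, the second vanishes, and the third is nonpositive. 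Moreover, the nonpositivity on $\Gamma_2$ is not ``dictated by the boundary data'' in any soft way; it is a quantitative fact requiring (i) the computation of $u_{rr}$ on $\Gamma_2$ from the PDE and the mean curvature $-(n-1)\cos R/\sin R$ of the geodesic sphere, yielding the explicit formula \eqref{e} for $\partial P/\partial\nu$ on $\Gamma_2$, and (ii) Claim~\ref{mangaba} (i.e.\ \eqref{araca}, $P|_{\Gamma_2}\le P|_{\Gamma_0}$) together with $a<-1$, which give $a+1\ge \cos R/\cos R_1$ and hence $\partial P/\partial\nu\ge0$, i.e.\ $Q=u_{rr}+1+a\le 0$, on $\Gamma_2$. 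These two inputs, plus the divergence-theorem identity replacing your Hopf sketch, are what your proposal is missing.
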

\begin{proof} Assume by contradiction that the $P-$function is not constant. 
By \eqref{araca} and \eqref{e}, 
\begin{equation*}
    \dfrac{\partial P}{\partial \nu} \geq 0 \quad on \ \Gamma_2.
\end{equation*}
By applying the Hopf lemma to \(\Gamma_0\) and using \eqref{d*}, we derive
\begin{equation}\label{estrela}
    0<\dfrac{\partial P}{\partial \nu}= 2\nabla^2 u(\nabla u,\nu)+2\dfrac{\partial u}{\partial \nu}+2u\dfrac{\partial u}{\partial \nu}=2\dfrac{\partial u}{\partial \nu}
    (u_{\nu\nu}+1).
\end{equation}
Thus,  $(u_{\nu\nu}+1)>0$ on $\Gamma_0$,  since $\dfrac{\partial u}{\partial \nu}>0$ on $\Gamma_0$. 

Analogously, on $\Gamma_2$ we have
\begin{eqnarray*}    0\leq \dfrac{\partial P}{\partial \nu}= 2\nabla^2 u(\nabla u,\nu)+2\dfrac{\partial u}{\partial \nu}+2u\dfrac{\partial u}{\partial \nu}=2\dfrac{\partial u}{\partial \nu}
    (u_{rr}+1+a).
\end{eqnarray*}
Thus, $(u_{rr}+1+a)\leq 0$ on $\Gamma_0$, since $\dfrac{\partial u}{\partial \nu}<0$ on $\Gamma_2$.

\begin{claim}
    The function $\widetilde{P} \coloneqq \langle \nabla u, \nabla \psi' \rangle + u \psi'+\psi'$ is harmonic in $\Omega$, 
    where $\psi=\sin(r)$.
\end{claim}

To prove this claim, we begin by noting that
$$
\nabla^2 \psi' = -\psi'g, \quad \Delta \psi' = -n\psi',
$$
where $g$ denotes the metric of $S^n$. Using the polarized Bochner formula, we derive
\begin{align*}
\Delta \langle \nabla u, \nabla \psi' \rangle &= \langle \nabla (\Delta u), \nabla \psi' \rangle + \langle \nabla u, \nabla (\Delta \psi') \rangle + 2\operatorname{tr}(\nabla^2 u \circ \nabla^2 \psi') + 2\operatorname{Ric}(\nabla u, \nabla \psi') \\
&= -2\psi'(-nu - n) - 2 \langle \nabla u, \nabla \psi' \rangle \\
&= 2nu\psi' + 2n\psi' - 2 \langle \nabla u, \nabla \psi' \rangle.
\end{align*}
Since
$$
\Delta (u\psi') = -2nu\psi' - n\psi' + 2 \langle \nabla u, \nabla \psi' \rangle,
$$
it follows that
$$
\Delta (\langle \nabla u, \nabla \psi' \rangle + u\psi') = n\psi' = -\Delta \psi',
$$
which establishes $\widetilde{P} $ is harmonic in $\Omega$. 

\bigskip 

For the sake of contradiction, we assume that the $\tilde{P}$-function is not constant. The next step is to analyze the sign of the normal derivative of the $\tilde{P}$-function on the boundary of $\Omega$. From this, we arrive at a contradiction.
\begin{equation*}
    \dfrac{\partial \Tilde{P}}{\partial \nu}= \nabla^2 u(\nabla \psi',\nu)+\nabla^2 \psi'(\nabla u,\nu )+u\dfrac{\partial \psi'}{\partial \nu}+\psi'\dfrac{\partial u}{\partial \nu}+\dfrac{\partial \psi'}{\partial \nu}.
\end{equation*}
 Thus, since $\Omega$ is a star-shaped domain with respect to $\mathcal{O}$,   on $\Gamma_0$ we have
\begin{eqnarray*}
    \dfrac{\partial \Tilde{P}}{\partial \nu}&=&u_{\nu \nu}\dfrac{\partial \psi'}{\partial \nu}-\psi'\dfrac{\partial u}{\partial \nu}+u\dfrac{\partial \psi'}{\partial \nu}+\psi'\dfrac{\partial u}{\partial \nu}+\dfrac{\partial \psi'}{\partial \nu}\\
    &=&(u_{\nu\nu}+1)\dfrac{\partial \psi'}{\partial \nu}\\
    &=& -(u_{\nu\nu}+1)\sin r\langle \nabla r, \nu \rangle <0.
\end{eqnarray*}

\medskip
Analogously, on $\Gamma_2$ we obtain
\begin{eqnarray*}
    \dfrac{\partial \Tilde{P}}{\partial \nu}&=&u_{\nu \nu}\dfrac{\partial \psi'}{\partial \nu}-\psi'\dfrac{\partial u}{\partial \nu}+u\dfrac{\partial \psi'}{\partial \nu}+\psi'\dfrac{\partial u}{\partial \nu}+\dfrac{\partial \psi'}{\partial \nu}\\
    &=&(u_{rr}+1+a)\dfrac{\partial \psi'}{\partial \nu}\\
    &=& -(u_{rr}+1+a)\sin r\langle \nabla r, \nu \rangle \leq 0 \quad on \ \Gamma_2,
\end{eqnarray*}
and on $\Gamma_1$, because $\nabla_{\nabla \psi'}\nu=0$ and $\langle \nabla \psi',\nu \rangle =0$, we conclude that
\begin{eqnarray*}
    \dfrac{\partial \Tilde{P}}{\partial \nu}&=&{\nabla \psi'}\left(\dfrac{\partial u}{\partial \nu}\right)-\psi'\dfrac{\partial u}{\partial \nu}+u\dfrac{\partial \psi'}{\partial \nu}+\psi'\dfrac{\partial u}{\partial \nu}+\dfrac{\partial \psi'}{\partial \nu}=0.
\end{eqnarray*}

\bigskip

On the other hand, using Theorem~\ref{DT}, we obtain 
\begin{equation*}
    0=\int_{\Omega}\Delta \Tilde{P} \, \mathrm{d}V=\int_{\partial \Omega}\dfrac{\partial\Tilde{P}}{\partial \nu}\, \mathrm{d}\sigma=\int_{\Gamma_0}\dfrac{\partial\Tilde{P}}{\partial \nu}\, \mathrm{d}\sigma+\int_{\Gamma_1}\dfrac{\partial\Tilde{P}}{\partial \nu} \, \mathrm{d}\sigma+\int_{\Gamma_2}\dfrac{\partial\Tilde{P}}{\partial \nu} \, \mathrm{d}\sigma<0,
\end{equation*}
this gives a contradiction with fact that $\Tilde{P}$ is harmonic on $\Omega$. 

The previous argument shows that at least one of the functions \( P \) or \( \tilde{P} \) is constant in \( \Omega \). If \( P \) is constant in \( \Omega \), the proof is complete. In the final step, we demonstrate that if \( \tilde{P} \) is constant in \( \Omega \), then \( P \) must also be constant. To this end, suppose \( \tilde{P} \) is constant in \( \Omega \). We already know that on $\Gamma_0$ it holds
\begin{equation*}
    \dfrac{\partial \Tilde{P}}{\partial \nu}= -(u_{\nu\nu}+1)\sin r\langle \nabla r, \nu \rangle.
\end{equation*}
Since $\dfrac{\partial \Tilde{P}}{\partial \nu}=0$ and $\sin r\langle \nabla r, \nu \rangle >0$ we conclude that $(u_{\nu\nu}+1)=0$, which together with \eqref{estrela} gives a contradition. 
\end{proof}

\subsubsection{Proof of Theorem \ref{cajarana}} 
We have proved in Proposition~\ref{z} that the auxiliary function  \( P \) is constant in $\Omega$. Consequently, \( \nabla^2 u \) is a multiple of the metric \( g \), specifically,  
\[
\nabla^2 u = \frac{\Delta u}{n} g = (-1 - u)g.
\]
Thus, using  the Obata-type lemma, we obtain \( \Gamma_0 = \partial B_R \cap \Sigma \), that is, \( \Omega = \{x\in \Sigma: R< d(x,\mathcal{O})< R_1\} \).  Moreover, \( u \) is a radial function, precisely, \eqref{radial} holds.
\

\subsubsection{General case}

 A vector field $X\in\mathfrak{X}(M)$ is called a conformal vector field if  
$$\langle\nabla_Y X, Z\rangle+\langle\nabla_Z X, Y\rangle=2\varphi\langle Y,Z\rangle,$$
where $\varphi$ is a smooth function on $M$.
Motivated by \cite{CV}, Proposition 3.1, we get the following result.
\begin{lemma}
Let $M$ be an $n$-dimensional Riemannian manifold with a conformal field $X$. If $u\in C^2(M)$, then
\begin{equation}\label{ca}
\operatorname{div}\left(\dfrac{|\nabla u|^2}{2}X-\langle X,\nabla u\rangle\nabla u \right)=\dfrac{n-2}{2}\varphi |\nabla u|^2 -\langle X, \nabla u \rangle\Delta u,
\end{equation}
where $\varphi$ is the conformal factor of $X$.
\end{lemma}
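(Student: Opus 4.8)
The plan is to expand the divergence by the Leibniz rule and to invoke the defining relation of the conformal field in two different ways: once by tracing it over an orthonormal frame, and once by evaluating it on the pair $(\nabla u,\nabla u)$. No integration or analytic input is needed; this is a pointwise tensorial identity, so I would fix a point and compute in a local orthonormal frame $\{e_i\}_{i=1}^n$.

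First I would split the field as $W=\tfrac12|\nabla u|^2\,X-\langle X,\nabla u\rangle\nabla u$ and treat the two summands separately. For the first, writing $f=\tfrac12|\nabla u|^2$, the product rule gives $\operatorname{div}(fX)=\langle\nabla f,X\rangle+f\operatorname{div}(X)$. Tracing the conformal identity (setting $Y=Z=e_i$ and summing over $i$) yields $\operatorname{div}(X)=\sum_i\langle\nabla_{e_i}X,e_i\rangle=n\varphi$, while $\langle\nabla f,X\rangle=\tfrac12 X(|\nabla u|^2)=\langle\nabla_X\nabla u,\nabla u\rangle=\nabla^2u(X,\nabla u)$. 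Hence $\operatorname{div}\!\big(\tfrac12|\nabla u|^2X\big)=\nabla^2u(X,\nabla u)+\tfrac{n}{2}\varphi|\nabla u|^2$.

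For the second summand, set $h=\langle X,\nabla u\rangle$, so that $\operatorname{div}(h\nabla u)=\langle\nabla h,\nabla u\rangle+h\,\Delta u$. Here $\langle\nabla h,\nabla u\rangle=\nabla u\langle X,\nabla u\rangle=\langle\nabla_{\nabla u}X,\nabla u\rangle+\nabla^2u(X,\nabla u)$, where I used the symmetry of the Hessian on the last term. Applying the conformal relation with $Y=Z=\nabla u$ gives $\langle\nabla_{\nabla u}X,\nabla u\rangle=\varphi|\nabla u|^2$, so $\operatorname{div}(h\nabla u)=\varphi|\nabla u|^2+\nabla^2u(X,\nabla u)+\langle X,\nabla u\rangle\Delta u$. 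Subtracting the two contributions, the two copies of $\nabla^2u(X,\nabla u)$ cancel and one is left with $\operatorname{div}(W)=\tfrac{n}{2}\varphi|\nabla u|^2-\varphi|\nabla u|^2-\langle X,\nabla u\rangle\Delta u=\tfrac{n-2}{2}\varphi|\nabla u|^2-\langle X,\nabla u\rangle\Delta u$, which is exactly \eqref{ca}.

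There is no genuine obstacle in this argument; the whole computation is elementary once the conformal identity is used in its two forms (trace and evaluation on $\nabla u$). The one point deserving emphasis — and the structural reason the right-hand side is so clean — is the cancellation of the Hessian term $\nabla^2u(X,\nabla u)$ between the two summands, which is precisely what makes the resulting expression depend on $u$ only through $|\nabla u|^2$ and $\Delta u$. This cancellation is also what makes the lemma a convenient Pohozaev-type tool, as anticipated in the strategy outlined for Theorem~\ref{general case}.
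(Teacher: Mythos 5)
Your proof is correct and follows essentially the same route as the paper's: both apply the Leibniz rule, use the traced conformal identity $\operatorname{div}(X)=n\varphi$, evaluate the conformal relation on $(\nabla u,\nabla u)$ to get $\langle\nabla_{\nabla u}X,\nabla u\rangle=\varphi|\nabla u|^2$, and rely on the cancellation of the two $\nabla^2u(X,\nabla u)$ terms. The only difference is cosmetic (you expand the two summands separately rather than all at once), so there is nothing to add.
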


\begin{proof}
    Consider $f=\dfrac{|\nabla u|^2}{2}$ and $g=\langle X, \nabla u\rangle$. Note that
    \begin{eqnarray*}
        \operatorname{div}(fX-g\nabla v)&=& f\operatorname{div} X+\langle\nabla f, X\rangle-g\Delta u-\langle\nabla g, \nabla u\rangle\\
        &=& fn\varphi+\langle\nabla f, X\rangle-g\Delta u-\langle\nabla g, \nabla u\rangle
    \end{eqnarray*}
    and
    \begin{equation*}
        \langle\nabla f, X\rangle=\langle\nabla_X\nabla u, \nabla u\rangle= \nabla^2u(X,\nabla u).
    \end{equation*}
    Since $X$  is a conformal field we have 
    \begin{eqnarray*}
        \langle\nabla g,\nabla u\rangle&=&\nabla u\langle X, \nabla u\rangle\\
        &=&\langle\nabla_{\nabla u}X, \nabla u\rangle+\langle X, \nabla_{\nabla u}\nabla u\rangle\\
        &=& \varphi |\nabla u|^2+\langle\nabla_X\nabla u, \nabla u\rangle\\
        &=& 2f\varphi +\nabla^2u(X,\nabla v).
    \end{eqnarray*}
Thus,
    \begin{eqnarray*}
        \operatorname{div}(fX+g\nabla u)&=&fn\varphi + \nabla^2u(X,\nabla u)-g\Delta u-2f\varphi-\nabla^2u(X,\nabla u)\\
        &=&(n-2)f\varphi -g\Delta u,
    \end{eqnarray*}
that is,
\begin{equation*}
\operatorname{div}\left(\dfrac{|\nabla u|^2}{2}X-\langle X,\nabla u\rangle\nabla u \right)=\dfrac{n-2}{2}\varphi |\nabla u|^2 -\langle X, \nabla u \rangle\Delta u.
\end{equation*}
\end{proof}
  
Next result is a Pohozaev type identity addressed to annular sector-like domain in warped products.

\begin{proposition}\label{a11} 
  Consider $\Omega$ be an annular sector-like domain contained in a space form $M$, given by \eqref{08}, with $\Omega_2=B_R(\mathcal{O})$.
   If  there exists a classical solution $u \in C^2(\Omega)\cap C^1(\Gamma_0\cup\Gamma_1\cup\Gamma_2)$ of \eqref{PG} such that $u\in W^{1,\infty}\cap W^{2,2}(\Omega)$, then 
   \begin{multline}\label{re}
      {c_0^2}\int_{\Omega} \varphi \, \mathrm{d} V={(n+2)}\int_\Omega \varphi u \, \mathrm{d} V+2k\int_\Omega \varphi u^2\, \mathrm{d} V +\dfrac{n-2}{n}\int_\Omega u\langle \nabla u,\nabla \varphi \rangle \, \mathrm{d} V \\
      \left[2a+a^2k-\dfrac{c_0^2-c_2^2}{n}\right]|\Gamma_2|\psi(R)-\dfrac{n-2}{n}ac_2|\Gamma_2|\varphi(R),
   \end{multline}
   where $X=\psi\partial_t$ and $\operatorname{div} X=n\varphi.$
\end{proposition}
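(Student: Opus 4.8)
The plan is to apply the conformal-field divergence identity \eqref{ca} to the closed conformal field $X=\psi\,\partial_r$ appearing in the statement, whose conformal factor is $\varphi=\psi'$ (so that $\operatorname{div}X=n\varphi$), and then to integrate over $\Omega$ using the divergence theorem for annular sector-like domains, Theorem~\ref{DT}. Setting $Y\coloneqq \frac{|\nabla u|^2}{2}X-\langle X,\nabla u\rangle\nabla u$, the identity \eqref{ca} together with the field equation $\Delta u=-knu-n$ from \eqref{PG} gives
\begin{equation*}
\int_{\partial\Omega}\langle Y,\nu\rangle\,\mathrm{d}\sigma=\frac{n-2}{2}\int_\Omega \varphi|\nabla u|^2\,\mathrm{d}V+\int_\Omega(knu+n)\langle X,\nabla u\rangle\,\mathrm{d}V.
\end{equation*}
The remaining work is to evaluate the boundary integral on the left using the overdetermined data and to reduce each interior integral on the right to the four quantities $\int_\Omega\varphi\,\mathrm{d}V$, $\int_\Omega\varphi u\,\mathrm{d}V$, $\int_\Omega\varphi u^2\,\mathrm{d}V$ and $\int_\Omega u\langle\nabla u,\nabla\varphi\rangle\,\mathrm{d}V$, plus boundary contributions supported on $\Gamma_2$.

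I would treat the boundary term face by face. On $\Gamma_1$ the radial field $X$ is tangent to the cone, so $\langle X,\nu\rangle=0$, and since $\partial u/\partial\nu=0$ there as well, the integrand $\langle Y,\nu\rangle$ vanishes identically. On $\Gamma_0$ one has $u=0$ and $\nabla u=c_0\nu$, hence $|\nabla u|^2=c_0^2$ and a short computation gives $\langle Y,\nu\rangle=-\tfrac{c_0^2}{2}\langle X,\nu\rangle$; rather than evaluating $\int_{\Gamma_0}\langle X,\nu\rangle\,\mathrm{d}\sigma$ directly, I would eliminate it by applying Theorem~\ref{DT} to $X$ itself, obtaining $\int_{\Gamma_0}\langle X,\nu\rangle\,\mathrm{d}\sigma=n\int_\Omega\varphi\,\mathrm{d}V+\psi(R)|\Gamma_2|$ (using $\langle X,\nu\rangle=0$ on $\Gamma_1$ and $\langle X,\nu\rangle=-\psi(R)$ on $\Gamma_2$). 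On $\Gamma_2=\partial B_R(\mathcal{O})\cap\Sigma$ the outward normal is $\nu=-\partial_r$, so $\langle X,\nu\rangle=-\psi(R)$, $\nabla u=c_2\nu$ and $|\nabla u|^2=c_2^2$, which yields $\langle Y,\nu\rangle=\tfrac{c_2^2}{2}\psi(R)$. This accounts for the $c_0^2$-factor on the left and the $c_0^2,c_2^2$-boundary contributions on $\Gamma_2$.

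For the interior I would integrate each term by parts through Theorem~\ref{DT}. The lower-order terms follow from $\langle X,\nabla u\rangle=\operatorname{div}(uX)-nu\varphi$ and $u\langle X,\nabla u\rangle=\tfrac12\operatorname{div}(u^2X)-\tfrac{n}{2}u^2\varphi$; since $u=0$ on $\Gamma_0$ and $\langle X,\nu\rangle=0$ on $\Gamma_1$, the only surviving boundary pieces sit on $\Gamma_2$ and produce the terms $-na\psi(R)|\Gamma_2|$ and $-\tfrac{kna^2}{2}\psi(R)|\Gamma_2|$. To handle the gradient term I would use $\operatorname{div}(\varphi u\nabla u)=\varphi|\nabla u|^2+u\langle\nabla\varphi,\nabla u\rangle+\varphi u\Delta u$ together with $\Delta u=-knu-n$, which gives
\begin{equation*}
\int_\Omega\varphi|\nabla u|^2\,\mathrm{d}V=ac_2\varphi(R)|\Gamma_2|-\int_\Omega u\langle\nabla\varphi,\nabla u\rangle\,\mathrm{d}V+kn\int_\Omega\varphi u^2\,\mathrm{d}V+n\int_\Omega\varphi u\,\mathrm{d}V,
\end{equation*}
and is the source of both the $\tfrac{n-2}{n}\int_\Omega u\langle\nabla u,\nabla\varphi\rangle$ term and the $\varphi(R)$-boundary term. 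Substituting all of these expressions, collecting the coefficients of $\int_\Omega\varphi u^2\,\mathrm{d}V$ and $\int_\Omega\varphi u\,\mathrm{d}V$ (which simplify to $-kn$ and $-\tfrac{n(n+2)}{2}$ respectively) and multiplying through by $-\tfrac{2}{n}$ should produce exactly \eqref{re}.

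I expect the main obstacle to be not any single computation but the justification that Theorem~\ref{DT} applies to each of the fields $Y$, $X$, $uX$, $u^2X$ and $\varphi u\nabla u$ on the merely piecewise-smooth domain $\Omega$: one must verify the hypotheses $|\cdot|\in C^1(\Omega\cup\Gamma_0\cup\Gamma_1\cup\Gamma_2)\cap L^2(\Omega)$ and integrable divergence, which rely on the regularity $u\in W^{1,\infty}(\Omega)\cap W^{2,2}(\Omega)$ and the smoothness of $\psi$ away from the pole, and one must handle the corner set $\partial\Gamma_0\cup\partial\Gamma_2$ where $\nu$ is undefined. Once this is secured, the $\Gamma_1$-contributions vanish by the Neumann condition and the tangency $\langle X,\nu\rangle=0$, while every remaining boundary term localizes on $\Gamma_2$ because $u=0$ on $\Gamma_0$, which is precisely what makes the final collection of terms match \eqref{re}.
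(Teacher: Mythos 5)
Your proposal is correct and follows essentially the same route as the paper's proof: both integrate the conformal-field identity \eqref{ca} for $X=\psi\,\partial_r$ via Theorem~\ref{DT}, evaluate the boundary flux of $\tfrac{|\nabla u|^2}{2}X-\langle X,\nabla u\rangle\nabla u$ face by face using the overdetermined data (with the $\Gamma_0$ term eliminated through $\int_{\partial\Omega}\langle X,\nu\rangle\,\mathrm{d}\sigma=n\int_\Omega\varphi\,\mathrm{d}V$), and reduce the interior terms through the divergences of $uX$, $u^2X$ and $\varphi u\nabla u$, exactly as in the paper's equations \eqref{fa}--\eqref{do}. Your coefficient bookkeeping ($-kn$ and $-\tfrac{n(n+2)}{2}$, then multiplication by $-\tfrac{2}{n}$) reproduces \eqref{re} correctly.
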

\begin{proof}  Since $X=\psi \partial_t$ is a closed conformal field, then $\operatorname{div} X=n\varphi$, where $\varphi=\psi'$.

      Since $\frac{\partial u}{\partial \nu}=0$ and $\langle X,\nu\rangle =0$ on $\Gamma_1$ and $\nu=-\partial_t$ on $\Gamma_2$ it follows from Theorem~\ref{DT} that
      \begin{eqnarray*}
          \int_\Omega \operatorname{div}\left(\dfrac{|\nabla u|^2}{2}X-\langle X,\nabla u\rangle\nabla u\, \right)\mathrm{d} V&=& \int_{\partial \Omega} \dfrac{|\nabla u|^2}{2}\langle X,\nu\rangle \, \mathrm{d} \sigma-\int_{\partial \Omega} \langle X,\nabla u\rangle \langle \nabla u, \nu\rangle \, \mathrm{d} \sigma\\
      &=&          -\dfrac{c_0^2}{2}\int_{\Gamma_0} \langle X,\nu\rangle \, \mathrm{d} \sigma- \dfrac{c_2^2}{2}\int_{\Gamma_2} \langle X,\nu\rangle \, \mathrm{d} \sigma\\
      &=&  -\dfrac{c_0^2}{2}\int_{\partial \Omega} \langle X,\nu\rangle \, \mathrm{d} \sigma+\dfrac{c_0^2-c_2^2}{2}\int_{\Gamma_2}\langle \psi \partial_t,\nu \rangle \,\mathrm{d}\sigma\\
      &=& -\dfrac{nc_0^2}{2}\int_{\Omega} \varphi \, \mathrm{d} V-\dfrac{c_0^2-c_2^2}{2}|\Gamma_2|\psi (R).
      \end{eqnarray*}
On the other hand, 
\begin{equation*}
    \int_\Omega \operatorname{div}(\varphi u \nabla u) \, \mathrm{d} V=\int_\Omega \varphi u \Delta u \, \mathrm{d} V+\int_\Omega u\langle \nabla u,\nabla \varphi \rangle \, \mathrm{d} V+\int_\Omega \varphi|\nabla u|^2 \, \mathrm{d} V.
\end{equation*}
Therefore, based on the overdetermined conditions \eqref{PG} we have that
      \begin{equation}\label{fa}
          \int_\Omega \varphi|\nabla u|^2 \, \mathrm{d} V = ac_2|\Gamma_2|\varphi(R)-\int_\Omega\left[\varphi u \Delta u+u\langle \nabla u,\nabla \varphi \rangle\right] \, \mathrm{d} V.
      \end{equation}
      By \eqref{ca} and \eqref{fa},
      \begin{eqnarray}\label{do}
          \dfrac{nc_0^2}{2}\int_{\Omega} \varphi \, \mathrm{d} V&=&\dfrac{n-2}{2}\int_\Omega\left[\varphi u \Delta u+u\langle \nabla u,\nabla \varphi \rangle\right] \, \mathrm{d} V-\dfrac{n-2}{2}ac_2|\Gamma_2|\varphi(R)\nonumber\\
          &&+\int_\Omega \langle X,\nabla u\rangle \Delta u \, \mathrm{d} V-\dfrac{c_0^2-c_2^2}{2}|\Gamma_2|\psi (R).
      \end{eqnarray}

      Note that
      \begin{equation*}
          \int_\Omega \varphi u\Delta u \, \mathrm{d} V=-n\int_\Omega \varphi u \, \mathrm{d} V-nk\int_\Omega \varphi u^2\, \mathrm{d} V
      \end{equation*}
      and 
      \begin{eqnarray}
          \int_\Omega \langle X,\nabla u\rangle \Delta u \, \mathrm{d} V &=& -n  \int_\Omega \langle X,\nabla u\rangle \, \mathrm{d} V-nk \int_\Omega u\langle X,\nabla u\rangle \, \mathrm{d} V\nonumber\\
          &=& -n\int_\Omega (\operatorname{div}(uX)-u\operatorname{div}(X)) \, \mathrm{d} V-nk \int_\Omega u\langle X,\nabla u\rangle \, \mathrm{d} V\nonumber\\
          &=&an|\Gamma_2|\psi(R)+n^2\int_\Omega u\varphi \, \mathrm{d} V- \dfrac{nk}{2}\int_\Omega (\operatorname{div}(u^2X)-n\varphi u^2)\, \mathrm{d} V\nonumber\\
          &=&an|\Gamma_2|\psi(R)+n^2\int_\Omega u\varphi \, \mathrm{d} V+\dfrac{a^2nk}{2}|\Gamma_2|\psi(R)+\dfrac{n^2k}{2}\int_\Omega \varphi u^2 \, \mathrm{d} V.
      \end{eqnarray}
Therefore
\begin{eqnarray*}
          {c_0^2}\int_{\Omega} \varphi \, \mathrm{d} V&=&{(n+2)}\int_\Omega \varphi u \, \mathrm{d} V+2k\int_\Omega \varphi u^2\, \mathrm{d} V +\dfrac{n-2}{n}\int_\Omega u\langle \nabla u,\nabla \varphi \rangle \, \mathrm{d} V\nonumber\\
          &&+2a|\Gamma_2|\psi(R)+{a^2k}|\Gamma_2|\psi(R)-\dfrac{c_0^2-c_2^2}{n}|\Gamma_2|\psi (R)-\dfrac{n-2}{n}ac_2|\Gamma_2|\varphi(R)
      \end{eqnarray*}
and \eqref{re} holds.

\begin{remark}
We point out that if we have a $\Gamma_2=\emptyset$, we obtain a Pohozaev-type identity for cones, see, for example, \cite[Lemma 2.2]{araujo2025serrin}.
\end{remark}


\subsubsection{Proof of Theorem~\ref{general case}}
 Using an argument analogous to Proposition \ref{max} we can prove that 
    \begin{equation}\label{vento}
        P< \dfrac{\sin^2 {R_1}}{\cos^2{R_1}} \qquad in \quad \Omega.
    \end{equation}
    Let us consider the following field 
    \begin{equation*}
        X=\psi\partial_r
    \end{equation*}
    where $\psi(r)=\sin{r}$. Note that, 
    \begin{equation}\label{ax}
        \psi'(r)=\cos{r}>0, \quad \psi''=-\psi \quad and \quad \operatorname{div} X=n\psi' \quad on \quad \mathbb S^n_+. 
    \end{equation}
    Multiplying \eqref{vento} by $\psi'$ and integrating, we have,
    \begin{equation}\label{modo}
        \dfrac{\sin^2 {R_1}}{\cos^2{R_1}}\int_\Omega \psi'\, \mathrm{d} V> \int _{\Omega} P\psi' \mathrm{d} V=\int_\Omega \left(|\nabla u|^2\psi'+u^2\psi'+2u\psi'\right)\, \mathrm{d}V.
    \end{equation}
    Observe that,
    \begin{equation*}
        \langle \nabla \psi',\nabla u\rangle=\langle \psi''\nabla r,\nabla u\rangle = \psi'' u_r \qquad in \quad \Omega
    \end{equation*}
    and
    \begin{equation*}
        \langle X, \nu\rangle =\psi\langle \partial_r,\nu\rangle =0 \qquad on \ \Gamma_1.
    \end{equation*}
    Now, applying Theorem~\ref{DT}, we have

\begin{eqnarray*}
    \int_\Omega |\nabla u|^2\psi'\, \mathrm{d}V &=& \int_\Omega \langle \nabla (u\psi'),\nabla u\rangle\, \mathrm{d}V-\int_\Omega u\langle \nabla \psi',\nabla u\rangle\, \mathrm{d}V\nonumber\\
    &=& \int_\Omega \operatorname{div}(u\psi'\nabla u)\, \mathrm{d}V- \int_\Omega u\psi'\Delta u\, \mathrm{d}V- \int_\Omega u\psi''u_r\, \mathrm{d}V\nonumber\\
    &=&  \int_{\partial\Omega}  u\psi'\dfrac{\partial u}{\partial\nu}\, \mathrm{d}\sigma- \int_\Omega u\psi'(-n-nu)\,\mathrm{d}V+ \int_\Omega u\psi u_r \, \mathrm{d}V\nonumber\\
    &=&  \int_{\Gamma_2} u\psi'\dfrac{\partial u}{\partial\nu} \, \mathrm{d}\sigma+n\int_\Omega u\psi' \, \mathrm{d}V+n\int_\Omega u^2\psi' \, \mathrm{d}V+\int_\Omega u\psi u_r \, \mathrm{d}V\nonumber
\end{eqnarray*}
which together with the boundary conditions gives 
\begin{equation}\label{a6}
      \int_\Omega |\nabla u|^2\psi'\, \mathrm{d}V =
 \dfrac{a\cos R\sin R}{\cos R_1}|\Gamma_2|+n\int_\Omega u\psi' \, \mathrm{d}V+n\int_\Omega u^2\psi' \, \mathrm{d}V+\int_\Omega u\psi u_r \, \mathrm{d}V
\end{equation}

    since $u=0$ on $\Gamma_0$ and $\partial u/\partial \nu =0$ on $\Gamma_1$. By \eqref{ax}, have that
    \begin{eqnarray*}
        \int_\Omega u \psi u_r \, \mathrm{d} V&=& \int_\Omega u\langle \nabla u, \psi\partial_r\rangle \, \mathrm{d} V=\dfrac{1}{2}\int_\Omega \langle \nabla u^2, X\rangle\,  \mathrm{d} V\\
&=& \dfrac{1}{2}\int_\Omega \operatorname{div}(u^2 X )\, \mathrm{d} V-\dfrac{1}{2}\int_\Omega u^2\operatorname{div}( X )\, \mathrm{d} V\\
&=& \dfrac{1}{2}\int_{\partial \Omega} u^2\langle X,\nu\rangle\,  \mathrm{d} \sigma -\dfrac{n}{2}\int_\Omega u^2 \psi'\,  \mathrm{d} V\\
&=& -\dfrac{a^2}{2}\sin{R}|\Gamma_2|-\dfrac{n}{2}\int_\Omega u^2 \psi'\,  \mathrm{d} V.
    \end{eqnarray*}
Thus,
    \begin{equation}\label{a7}
        \int_\Omega u^2 \psi ' \mathrm{d} V=-\dfrac{a^2}{n}\sin{R}|\Gamma_2|-\dfrac{2}{n}\int_\Omega u \psi u_r \mathrm{d} V
    \end{equation}
    Replacing \eqref{a6} and \eqref{a7} in \eqref{modo}, we get
    \begin{equation}\label{b2}
    \begin{aligned}
         \dfrac{\sin^2{R_1}}{\cos^2{R_1}}\int_\Omega \psi'\, \mathrm{d} V &> a\sin{R}|\Gamma_2|\dfrac{\cos{R}}{\cos{R_1}}-a^2\sin{R}|\Gamma_2|-2\int_\Omega u \psi u_r\, \mathrm{d} V +n\int_\Omega u\psi' \,  \mathrm{d} V\\
        &+\int_\Omega u\psi u_r\, \mathrm{d} V-\dfrac{a^2}{n}\sin{R}|\Gamma_2|- \dfrac{2}{n}\int_\Omega u\psi u_r \, \mathrm{d} V+ 2\int_\Omega u\psi'\, \mathrm{d} V\\
        &= a\sin{R}|\Gamma_2|\left(\dfrac{\cos{R}}{\cos R_1}-\dfrac{a(n+1)}{n} \right)+(n+2)\int_\Omega u\psi'  \, \mathrm{d} V\\
        &-\left(1+\dfrac{2}{n}\right) \int_\Omega u\psi u_r \, \mathrm{d} V.
    \end{aligned}
            \end{equation}
By Lemma~\ref{a11},
\begin{equation}\label{b1}
    \begin{aligned}
           \dfrac{\sin^2{R_1}}{\cos^2{R_1}}\int_{\Omega} \psi' \, \mathrm{d} V&={(n+2)}\int_\Omega \psi' u \, \mathrm{d} V-\left(1+\dfrac{2}{n}\right) \int_\Omega u\psi u_r \, \mathrm{d} V\\
          &+2a|\Gamma_2|\sin(R)+\left(1-\dfrac{2}{n}\right){a^2}|\Gamma_2|\sin(R)-{\dfrac{\sin^2{R_1}-\sin^2{R}}{n\cos^2{R_1}}}|\Gamma_2|\sin (R)\\
          &-\left(1-\dfrac{2}{n}\right)a\sin{R}|\Gamma_2|\dfrac{\cos{R}}{\cos{R_1}}. 
\end{aligned}
\end{equation}
Therefore, through \eqref{b2} and \eqref{b1}, we have
    \begin{equation*}
    \begin{aligned}
          a\left(\dfrac{\cos{R}}{\cos R_1}-\dfrac{a(n+1)}{n} \right)&<2a+\left(1-\dfrac{2}{n}\right){a^2}-{\dfrac{\sin^2{R_1}-\sin^2{R}}{n\cos^2{R_1}}}-\left(1-\dfrac{2}{n}\right)a\dfrac{\cos{R}}{\cos{R_1}}. 
\end{aligned}
\end{equation*}
Thus,
 \begin{equation*}
    \begin{aligned}
         0&<{\dfrac{\sin^2{R}-\sin^2{R_1}}{\cos^2{R_1}}}-2a(n-1)\dfrac{\cos{R}}{\cos{R_1}}+(2n-1)a^2+2an. 
\end{aligned}
\end{equation*}
But,
\begin{equation*}
    {\dfrac{\sin^2{R}-\sin^2{R_1}}{\cos^2{R_1}}}\leq -2a-a^2
\end{equation*}
and
\begin{equation*}
    a+1\leq  \dfrac{\cos{R}}{\cos{R_1}}.
\end{equation*}
Hence,
\begin{equation*}
    0< 2a(n-1)\left(a+1- \dfrac{\cos{R}}{\cos{R_1}}\right)\leq 0,
\end{equation*}
which is a contradiction. Thus, \( P \) is constant. Consequently, \( \nabla^2 u \) is a multiple of the metric \( g \), specifically,  
\[
\nabla^2 u = \frac{\Delta u}{n} g = (-1 - u)g.
\]
 Therefore, by Lemma~\ref{Obata}, the result follows. Moreover,
\begin{equation*}
    u(x) = \frac{\cos{r}-\cos{R_1}}{\cos{R_1}},
\end{equation*}
where \( r = \operatorname{dist}(x, p) \).

\subsection{Hyperbolic case}

\subsubsection{Proof of Theorem~\ref{jenipapo}} 

\begin{proposition}\label{egua} Let $u$ be a solution of Problem~\ref{saber3}. The $P-$function 
   \[
P(u) := |\nabla u|^2 + 2u - u^2
\] 
 satisfies the following properties:
\begin{equation}
\begin{cases}
    \Delta P \geq 0 & \text{in } \Omega, \\
    \dfrac{\partial P}{\partial \nu} \leq 0 & \text{on } \Gamma_1, \\
    P = \dfrac{\sinh^2 R_1}{\cosh^2 R_1} & \text{on } \Gamma_0, \\
    P = \dfrac{\sinh^2 R}{\cosh^2 R_1} + 2a - a^2 & \text{on } \Gamma_2.
\end{cases}
\end{equation}
Moreover, 
\begin{equation*}
    \Delta P =0 \Longleftrightarrow \nabla^2 u = -(1-u)g.
\end{equation*}
\end{proposition}
\begin{proof}
Indeed, it is known that
\[
(\Delta u)^2 \leq n |\nabla^2 u|^2,
\]
with equality holding if and only if \(\nabla^2 u\) is proportional to the metric \(g\). Applying the Bochner formula, we have:
\begin{align*} 
\Delta |\nabla u|^2 &= 2 \langle \nabla(\Delta u), \nabla u \rangle + 2 |\nabla^2 u|^2 + 2 \operatorname{Ric}(\nabla u, \nabla u) \\ 
&\geq 2n |\nabla u|^2 + \frac{2}{n}(\Delta u)^2 - 2(n-1)|\nabla u|^2  \\  
&= -2\Delta u + \Delta u^2.
\end{align*} 
We conclude that
\[
\Delta P \geq 0,
\]
indicating that \(P(u)\) is subharmonic in \(\Omega\). 

Again, from the convexity of the cone, we obtain  
\[
A(\nabla u, \nabla u) = \langle \nabla_{\nabla u} \nu, \nabla u \rangle \geq 0 \quad \text{on } \Gamma_1.
\]
Therefore, on \(\Gamma_1\),
\begin{align*}
    0 = \nabla u\left(\dfrac{\partial u}{\partial \nu}\right) &= \nabla u \langle \nabla u, \nu \rangle \\
    &= \langle \nabla_{\nabla u} \nabla u, \nu \rangle + \langle \nabla u, \nabla_{\nabla u} \nu \rangle \\ 
    &= \nabla^2 u(\nabla u, \nu) + A(\nabla u, \nabla u) \\
    &\geq \nabla^2 u(\nabla u, \nu).
\end{align*}
Thus,
\[
\dfrac{\partial P}{\partial \nu} = 2\nabla^2 u(\nabla u, \nu) + 2\dfrac{\partial u}{\partial \nu} - 2u\dfrac{\partial u}{\partial \nu} \leq 0 \quad \text{on } \Gamma_1,
\]
which complete the proof.\end{proof}

We claim that \(P\) is constant in \(\Omega\), and the constant \(a\) satisfies
\[
a = 1 - \frac{\cosh R}{\cosh R_1}.
\]
To prove this, we observe that, by Proposition~\ref{MP}, the maximum value of \(P\) is attained on \(\Gamma_0 \cup \Gamma_2\). 

From the hypothesis \(1 - \frac{\cosh R}{\cosh R_1} \leq a < 0\), we have
\[
(a - 1)^2 \leq \frac{\cosh^2 R}{\cosh^2 R_1}.
\]
Consequently,
\[
a^2 - 2a \leq \frac{\cosh^2 R}{\cosh^2 R_1} - 1 = \frac{\sinh^2 R - \sinh^2 R_1}{\cosh^2 R_1}.
\]
Therefore,
\[
\frac{\sinh^2 R}{\cosh^2 R_1} + 2a - a^2 \geq \frac{\sinh^2 R_1}{\cosh^2 R_1},
\]
which implies that \(P\) attains its maximum value on \(\Gamma_2\). Suppose, for contradiction, that \(P\) is not constant. By Hopf's lemma, we have
\begin{equation}\label{3.1}
    \frac{\partial P}{\partial \nu} > 0 \quad \text{on } \Gamma_2,
\end{equation}
where \(\nu\) is the outward unit normal to \(\Gamma_2\).

Let \(\{e_1, \ldots, e_{n-1}, \partial_r\}\) be a local orthonormal frame of \(\Omega\) at \(\Gamma_2\), where \(\{e_1, \ldots, e_{n-1}\}\) is tangent to \(\Gamma_2\). Since \(u\) is constant on \(\Gamma_2\), we have
\[
u_i = 0 \quad \text{and} \quad u_{ij} = 0 \quad \text{on } \Gamma_2,
\]
for all \(i, j = 1, \ldots, n-1\). Moreover, since \(u_r\) is constant on \(\Gamma_2\),
\[
u_{ri} = 0 \quad \text{on } \Gamma_2,
\]
for all \(i = 1, \ldots, n-1\). Then, for all \(W, Z \in \mathfrak{X}(\Omega)\),
\begin{equation}\label{2d}
    \nabla^2 u(W, Z) = \langle W, \partial_r \rangle \langle Z, \partial_r \rangle u_{rr} \quad \text{on } \Gamma_2.
\end{equation}
Analogously, 
\begin{equation}\label{2d*}
    \nabla^2 u(W,Z)=\langle W, \nu \rangle \langle Z , \nu\rangle u_{\nu \nu} \quad on \ \Gamma_0 ,
\end{equation}
where $\nu$ is orthogonal to $\Gamma_0$.

Let $\Delta^{\Gamma_2}$ be the Laplacian of $\Gamma_2$ and $H$ the mean curvature of $\Gamma_2$.
We recall that
\begin{equation*}
    \Delta u=\Delta^{\Gamma_2} u+H\langle\nabla u,\nu\rangle+\nabla^2u(\nu,\nu),
\end{equation*}
along  $\Gamma_2$.
Since $u$ is constant on $\Gamma_2$, we have $\Delta^{\Gamma_2}u=0$. Since  $\Gamma_2$ is a ball of radius $R$,  the mean curvature of $\Gamma_2$ is equal to $(n-1){\cosh R}/{\sinh R}$ and, therefore, 
 \begin{eqnarray*}
     \Delta u&=& \Delta^{\Gamma_2} u+H\langle \nabla u, \partial_r \rangle + \nabla^2 u(\partial_r, \partial_r)\\
            &=& (n-1)\dfrac{\cosh R}{\sinh R}u_r +u_{rr}, \quad on \ \Gamma_2.
 \end{eqnarray*}
 Substituting the known values, we obtain
\[
-n + na = (n - 1)\frac{\cosh R}{\sinh R}\left(-\frac{\sinh R}{\cosh R_1}\right) + u_{rr},
\]
which simplifies to
\[
u_{rr} = -n + na + (n - 1)\frac{\cosh R}{\cosh R_1} \quad \text{on } \Gamma_2.
\]

By \eqref{2d}, the normal derivative of \(P\) on \(\Gamma_2\) is given by
\begin{align*}
    \dfrac{\partial P}{\partial \nu} &= 2\nabla^2 u(\nabla u, \nu) + 2\dfrac{\partial u}{\partial \nu} - 2u\dfrac{\partial u}{\partial \nu}\\
    &=2u_{rr}\dfrac{\partial u}{\partial \nu}+2\dfrac{\partial u}{\partial \nu}-2u\dfrac{\partial u}{\partial \nu}\nonumber.
\end{align*}

Substituting the expression for \(u_{rr}\), we obtain
\begin{equation}\label{male}
    \dfrac{\partial P}{\partial \nu} = -2(n - 1)\frac{\sinh R}{\cosh R_1}\left(\frac{\cosh R}{\cosh R_1} - 1 + a\right) \quad \text{on } \Gamma_2.
\end{equation}
Under the hypothesis \(a - 1 + \frac{\cosh R}{\cosh R_1} \geq 0\), this implies
\[
\dfrac{\partial P}{\partial \nu} \leq 0 \quad \text{on } \Gamma_2,
\]
which contradicts \eqref{3.1}. Therefore, \(P\) must be constant, and
\[
a = 1 - \frac{\cosh R}{\cosh R_1}.
\]
As a result, \(\nabla^2 u\) is a multiple of the metric \(g\), specifically,
\[
\nabla^2 u = \frac{\Delta u}{n} g = (-1 + u)g.
\]
By Lemma~\ref{Obata}, the result follows. Moreover, the solution \(u\) is given explicitly by
\[
u(x) = \frac{\cosh R_1 - \cosh r}{\cosh R_1},
\]
where \(r = \operatorname{dist}(x, p)\).

\end{proof}

\subsubsection{Proof of Theorem~\ref{jenipapo2}}

    Consider the function
\[
P(u) := |\nabla u|^2 + 2u - u^2.
\]
As shown in the proof of Theorem~\ref{jenipapo}, the maximum value of \(P\) is attained on \(\Gamma_0 \cup \Gamma_2\). Moreover, \(P\) attains its maximum on \(\Gamma_0\), and
\begin{equation*}
    a - 1 + \dfrac{\cosh{R}}{\cosh{R_1}} \leq 0.
\end{equation*}
Thus, by \eqref{male}, we have
\begin{equation*}
    \dfrac{\partial P}{\partial \nu} \geq 0 \quad \text{on} \ \Gamma_2.
\end{equation*}
By applying the Hopf lemma to \(\Gamma_0\) and using \eqref{2d*}, we derive
\begin{equation*}
    0 < \dfrac{\partial P}{\partial \nu} = 2\nabla^2 u(\nabla u, \nu) + 2\dfrac{\partial u}{\partial \nu} + 2u\dfrac{\partial u}{\partial \nu} = 2\dfrac{\partial u}{\partial \nu} (u_{\nu\nu} + 1).
\end{equation*}
Thus, \( (u_{\nu\nu} + 1) > 0 \) on \(\Gamma_0\), since \(\dfrac{\partial u}{\partial \nu} > 0\) on \(\Gamma_0\).

Analogously, on \(\Gamma_2\), we have
\begin{eqnarray*}
    0 \leq \dfrac{\partial P}{\partial \nu} = 2\nabla^2 u(\nabla u, \nu) + 2\dfrac{\partial u}{\partial \nu} + 2u\dfrac{\partial u}{\partial \nu} = 2\dfrac{\partial u}{\partial \nu} (u_{rr} + 1 - a).
\end{eqnarray*}
Thus, \( (u_{rr} + 1 + a) \leq 0 \) on \(\Gamma_2\), since \(\dfrac{\partial u}{\partial \nu} < 0\) on \(\Gamma_2\).

\begin{claim}
    The function \(\widetilde{P} \coloneqq \langle \nabla u, \nabla \psi' \rangle - u \psi' + \psi'\) is harmonic in \(\Omega\), 
    where \(\psi = \sinh(r)\).
\end{claim}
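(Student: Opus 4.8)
The plan is to follow the same scheme used for the spherical claim, carrying the computation through with the sign changes forced by passing from curvature $k=+1$ to the hyperbolic value $k=-1$. The first step is to record the second-order data of $\psi'$. For the model metric every radial function $h(r)$ has Hessian with radial coefficient $h''$ and tangential coefficient $h'\,\psi'/\psi$; when $h'$ is a constant multiple of $\psi$ these two coefficients coincide and $\nabla^2 h$ becomes pointwise proportional to $g$. Taking $h=\psi'=\cosh r$ in the hyperbolic case, where $\psi''=\psi$ and $\psi'''=\psi'$, I get $h'=\psi''=\psi$, so both coefficients equal $\psi'$ and therefore
\[
\nabla^2\psi'=\psi'\,g,\qquad \Delta\psi'=n\psi'.
\]
These are exactly the spherical identities with the sign of the curvature reversed.

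With these in hand, the second step is to apply the polarized Bochner formula to $\langle\nabla u,\nabla\psi'\rangle$. Using the field equation $\Delta u=nu-n$ (so $\nabla(\Delta u)=n\nabla u$), the identity $\nabla(\Delta\psi')=n\nabla\psi'$, the relation $\operatorname{tr}(\nabla^2 u\circ\nabla^2\psi')=\psi'\,\Delta u=\psi'(nu-n)$, and $\operatorname{Ric}=-(n-1)g$ on $\mathbb{H}^n$, I expect to obtain
\[
\Delta\langle\nabla u,\nabla\psi'\rangle = 2\langle\nabla u,\nabla\psi'\rangle + 2nu\psi' - 2n\psi',
\]
after the $\langle\nabla u,\nabla\psi'\rangle$ coefficients collapse via $n+n-2(n-1)=2$.

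The third step is the elementary product rule
\[
\Delta(u\psi') = u\,\Delta\psi'+\psi'\,\Delta u+2\langle\nabla u,\nabla\psi'\rangle = 2nu\psi' - n\psi' + 2\langle\nabla u,\nabla\psi'\rangle.
\]
Subtracting this from the previous display and adding $\Delta\psi'=n\psi'$ yields
\[
\Delta\widetilde{P}=\Delta\langle\nabla u,\nabla\psi'\rangle-\Delta(u\psi')+\Delta\psi'=0,
\]
since the $\langle\nabla u,\nabla\psi'\rangle$ and $u\psi'$ contributions cancel in pairs and the $\psi'$ terms sum to $-2n+n+n=0$. This establishes that $\widetilde{P}$ is harmonic in $\Omega$.

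No step constitutes a genuine obstacle; the argument is entirely parallel to the spherical case, so the only real care needed is sign bookkeeping coming from $k=-1$, which turns $\nabla^2\psi'=-\psi'g$ into $\nabla^2\psi'=+\psi'g$, turns $\operatorname{Ric}=(n-1)g$ into $\operatorname{Ric}=-(n-1)g$, and replaces the defining equation by $\Delta u=nu-n$. I emphasize that this sign flip in the linear-in-$u$ term is precisely what dictates the $-u\psi'$ in the definition of $\widetilde{P}$ (in contrast to the $+u\psi'$ appearing in the spherical claim): it is the choice that makes $\Delta(\langle\nabla u,\nabla\psi'\rangle-u\psi')=-n\psi'=-\Delta\psi'$, so that adding $\psi'$ produces a harmonic function.
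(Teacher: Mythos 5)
Your proposal is correct and follows essentially the same route as the paper: establish $\nabla^2\psi'=\psi' g$ and $\Delta\psi'=n\psi'$, apply the polarized Bochner formula to $\langle\nabla u,\nabla\psi'\rangle$ using $\Delta u = nu-n$ and $\operatorname{Ric}=-(n-1)g$, compute $\Delta(u\psi')$ by the product rule, and combine so that all terms cancel. The only difference is cosmetic — you justify the Hessian identity via the warped-product formula for radial functions (which the paper merely asserts) and add a correct remark on why the sign of the $u\psi'$ term must flip relative to the spherical case.
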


To prove this claim, we begin by noting that
\[
\nabla^2 \psi' = \psi' g, \quad \Delta \psi' = n \psi',
\]
where \(g\) denotes the metric of \(S^n\). Using the polarized Bochner formula, we derive
\begin{align*}
\Delta \langle \nabla u, \nabla \psi' \rangle &= \langle \nabla (\Delta u), \nabla \psi' \rangle + \langle \nabla u, \nabla (\Delta \psi') \rangle + 2\operatorname{tr}(\nabla^2 u \circ \nabla^2 \psi') + 2\operatorname{Ric}(\nabla u, \nabla \psi') \\
&= 2\psi'(n u - n) + 2 \langle \nabla u, \nabla \psi' \rangle \\
&= 2n u \psi' - 2n \psi' + 2 \langle \nabla u, \nabla \psi' \rangle.
\end{align*}
Since
\[
\Delta (u \psi') = u \Delta \psi' + \psi' \Delta u + 2\langle \nabla u, \nabla \psi' \rangle = 2n u \psi' - n \psi' + 2\langle \nabla u, \nabla \psi' \rangle,
\]
it follows that
\[
\Delta (\langle \nabla u, \nabla \psi' \rangle - u \psi') = -n \psi' = -\Delta \psi'.
\]
This establishes that \(\widetilde{P}\) is harmonic in \(\Omega\).

For the sake of contradiction, assume that the \(\widetilde{P}\)-function is not constant. The next step is to analyze the sign of the normal derivative of the \(\widetilde{P}\)-function on the boundary of \(\Omega\). From this, we arrive at a contradiction:
\[
\dfrac{\partial \widetilde{P}}{\partial \nu} = \nabla^2 u(\nabla \psi', \nu) + \nabla^2 \psi'(\nabla u, \nu) + u \dfrac{\partial \psi'}{\partial \nu} + \psi' \dfrac{\partial u}{\partial \nu} + \dfrac{\partial \psi'}{\partial \nu}.
\]
Thus, since \(\Omega\) is a star-shaped domain with respect to \(\mathcal{O}\), on \(\Gamma_0\) we have
\begin{eqnarray*}
    \dfrac{\partial \widetilde{P}}{\partial \nu} &= u_{\nu \nu} \dfrac{\partial \psi'}{\partial \nu} - \psi' \dfrac{\partial u}{\partial \nu} + u \dfrac{\partial \psi'}{\partial \nu} + \psi' \dfrac{\partial u}{\partial \nu} + \dfrac{\partial \psi'}{\partial \nu} \\
    &= (u_{\nu\nu} + 1) \dfrac{\partial \psi'}{\partial \nu} \\
    &= (u_{\nu\nu} + 1) \sinh r \langle \nabla r, \nu \rangle < 0.
\end{eqnarray*}

Analogously, on \(\Gamma_2\) we obtain
\begin{eqnarray*}
    \dfrac{\partial \widetilde{P}}{\partial \nu} &= u_{\nu \nu} \dfrac{\partial \psi'}{\partial \nu} - \psi' \dfrac{\partial u}{\partial \nu} + u \dfrac{\partial \psi'}{\partial \nu} + \psi' \dfrac{\partial u}{\partial \nu} + \dfrac{\partial \psi'}{\partial \nu} \\
    &= (u_{rr} + 1 + a) \dfrac{\partial \psi'}{\partial \nu} \\
    &= (u_{rr} + 1 + a) \sinh r \langle \nabla r, \nu \rangle \leq 0 \quad \text{on} \ \Gamma_2,
\end{eqnarray*}
and on \(\Gamma_1\), because \(\nabla_{\nabla \psi'} \nu = 0\) and \(\langle \nabla \psi', \nu \rangle = 0\), we conclude that
\begin{eqnarray*}
    \dfrac{\partial \widetilde{P}}{\partial \nu} &= \nabla \psi' \left( \dfrac{\partial u}{\partial \nu} \right) - \psi' \dfrac{\partial u}{\partial \nu} + u \dfrac{\partial \psi'}{\partial \nu} + \psi' \dfrac{\partial u}{\partial \nu} + \dfrac{\partial \psi'}{\partial \nu} = 0.
\end{eqnarray*}
Following the same argument as in the proof of Proposition~\ref{z} and Theorem~\ref{cajarana}, we conclude the proof of Theorem~\ref{jenipapo2}.

\bigskip

\subsection*{Statements and Declarations}

\begin{flushleft}
 {\bf Funding:}  
 J. M. do \'O acknowledges partial support from CNPq through grants 312340/2021-4, 409764/2023-0, 443594/2023-6, CAPES MATH AMSUD grant 88887.878894/2023-00
and Para\'iba State Research Foundation (FAPESQ), grant no 3034/2021, 
J. de Lima acknowledges partial support  from  the Coordena\c c\~ao de Aperfei\c coamento de Pessoal de N\'ivel Superior - Brasil (CAPES) - Finance Code 001 and 
M. Santos acknowledges partial support from CNPq through grant 306524/2022-8.\\
 {\bf Ethical Approval:}  Not applicable.\\
 {\bf Competing interests:}  The authors declare that they have no competing interests or other conflicts that could be perceived as influencing the results and/or discussion presented in this paper. \\
 {\bf Author Contributions Statement:}    All authors contributed to the study conception and design. All authors performed material preparation, data collection, and analysis. The authors read and approved the final manuscript.\\
{\bf Availability of data and material:}  Not applicable.\\
{\bf Ethical Approval:}  All data generated or analyzed during this study are included in this article.\\
{\bf Consent to participate:}  All authors consent to participate in this work.\\
{\bf Conflict of interest:} The authors declare no conflict of interest. \\
{\bf Consent for publication:}  All authors consent for publication. \\
\end{flushleft}

\bigskip


\begin{thebibliography}{10}

\bibitem{alexandrov1958vestnik}
A.~Alexandrov.
\newblock Vestnik leningrad univ. 13, 5--8.
\newblock {\em Am. Math. Soc. Transl.(Ser. 2)}, 21:412--416, 1958.

\bibitem{alexandrov1962characteristic}
A.~D. Alexandrov.
\newblock A characteristic property of spheres.
\newblock {\em Annali di Matematica Pura ed Applicata}, 58(1):303--315, 1962.

\bibitem{araujo2025serrin}
M.~Ara{\'u}jo, A.~Freitas, M.~Santos, and J.~Sindeaux.
\newblock Serrin's type problems in convex cones in riemannian manifolds.
\newblock {\em arXiv preprint arXiv:2501.05551}, 2025.

\bibitem{ciraolo2020serrin}
G.~Ciraolo and A.~Roncoroni.
\newblock Serrin’s type overdetermined problems in convex cones.
\newblock {\em Calculus of Variations and Partial Differential Equations}, 59:1--21, 2020.

\bibitem{CV}
G.~Ciraolo and L.~Vezzoni.
\newblock A rigidity problem on the round sphere.
\newblock {\em Communications in Contemporary Mathematics}, 19:1--11, 2017.

\bibitem{ciraolo2019serrin}
G.~Ciraolo and L.~Vezzoni.
\newblock On serrin’s overdetermined problem in space forms.
\newblock {\em manuscripta mathematica}, 159:445--452, 2019.

\bibitem{fall2018serrin}
M.~M. Fall, I.~A. Minlend, and T.~Weth.
\newblock Serrin’s overdetermined problem on the sphere.
\newblock {\em Calculus of Variations and Partial Differential Equations}, 57:1--24, 2018.

\bibitem{federerGMT}
H.~Federer.
\newblock {\em Geometric Measure Theory}, volume 153 of {\em Die Grundlehren der mathematischen Wissenschaften}.
\newblock Springer, Berlin, Heidelberg, 1969.

\bibitem{MR1814364}
D.~Gilbarg and N.~S. Trudinger.
\newblock {\em Elliptic partial differential equations of second order}.
\newblock Classics in Mathematics. Springer-Verlag, Berlin, 2001.
\newblock Reprint of the 1998 edition.

\bibitem{MR1688256}
E.~Hebey.
\newblock {\em Nonlinear analysis on manifolds: {S}obolev spaces and inequalities}, volume~5 of {\em Courant Lecture Notes in Mathematics}.
\newblock New York University, Courant Institute of Mathematical Sciences, New York; American Mathematical Society, Providence, RI, 1999.

\bibitem{kumaresan1998serrins}
S.~Kumaresan and J.~Prajapat.
\newblock Serrin's result for hyperbolic space and sphere.
\newblock {\em Duke Mathematical Journal}, 91(1):17--28, 1998.

\bibitem{lee2023overdetermined}
J.~Lee and K.~Seo.
\newblock Overdetermined problems in annular domains with a spherical-boundary component in space forms.
\newblock {\em Journal of Inequalities and Applications}, 2023(1):45, 2023.

\bibitem{lee2023radial}
J.~Lee and K.~Seo.
\newblock Radial symmetry and partially overdetermined problems in a convex cone.
\newblock {\em Mathematische Nachrichten}, 296(3):1204--1224, 2023.

\bibitem{molzon1991symmetry}
R.~Molzon.
\newblock Symmetry and overdetermined boundary value problems.
\newblock {\em Forum Mathematicum}, 3(2):143--156, 1991.

\bibitem{PacellaTralli2020}
F.~Pacella and G.~Tralli.
\newblock Overdetermined problems and constant mean curvature surfaces in cones.
\newblock {\em Revista Matemática Iberoamericana}, 36(3):841--867, 2020.

\bibitem{qiu2017overdetermined}
G.~Qiu and C.~Xia.
\newblock Overdetermined boundary value problems in $\mathcal{S}^n$.
\newblock {\em J. Math. Study}, 50(2):165--173, 2017.

\bibitem{roncoroni2018serrin}
A.~Roncoroni.
\newblock A serrin-type symmetry result on model manifolds: an extension of the weinberger argument.
\newblock {\em Comptes Rendus. Math{\'e}matique}, 356(6):648--656, 2018.

\bibitem{Serrin1971}
J.~Serrin.
\newblock A symmetry problem in potential theory.
\newblock {\em Archive for Rational Mechanics and Analysis}, 43:304--318, 1971.

\bibitem{MR333221}
H.~F. Weinberger.
\newblock Remark on the preceding paper of {S}errin.
\newblock {\em Arch. Rational Mech. Anal.}, 43:319--320, 1971.

\end{thebibliography}

\end{document}